% !TEX encoding = UTF-8 Unicode

\documentclass[12pt, reqno]{amsart}

\usepackage{amsmath}
\usepackage{amssymb,amsfonts,amscd,amsthm}
\usepackage{latexsym}
\usepackage{mathrsfs}
\usepackage{amsaddr}
\usepackage{color}

\usepackage{hyperref}

\newcommand\al{\alpha}

\newcommand\Ga{\Gamma}
\newcommand\de{\delta}

\newcommand\om{\omega}
\newcommand\Om{\Omega}

\newcommand\la{\lambda}
\newcommand\La{{\Lambda}}
\newcommand{\eps}{\varepsilon}

\newcommand\si{\sigma}

\newcommand\R{\mathbb R}
\newcommand\C{\mathbb C}

\newcommand\Z{\mathbb Z}
\newcommand\Y{\mathbb Y}

\renewcommand\L{\mathbb  L}

\renewcommand\P{\mathscr P}
\newcommand\XX{\mathcal X}
\newcommand\YY{\mathcal Y}
\newcommand\LL{\mathcal L}

\newcommand\const{\operatorname{const}}

\newcommand\Sym{\operatorname{Sym}}

\newcommand\Sign{\operatorname{Sign}}
\newcommand\DSign{\operatorname{DSign}}

\newcommand\Conf{\operatorname{Conf}}

\newcommand\wt{\widetilde}

\newcommand\Int{\mathscr I}
\newcommand\LaN{\La^N_{N-1}}
\newcommand\ccdot{\,\cdot\,}

\DeclareMathOperator*\Res{Res}
\newcommand\pprec{\prec\!\!\prec}

\newtheorem{theorem}{Theorem}[section]
\newtheorem{proposition}[theorem]{Proposition}
\newtheorem{lemma}[theorem]{Lemma}
\newtheorem{corollary}[theorem]{Corollary}
\newtheorem*{theoremA}{Theorem A}
\newtheorem*{theoremB}{Theorem B}
\newtheorem*{theoremC}{Theorem C}

\theoremstyle{definition}
\newtheorem{definition}[theorem]{Definition}
\newtheorem{remark}[theorem]{Remark}
\newtheorem{example}[theorem]{Example}

\numberwithin{equation}{section}

\evensidemargin0cm \oddsidemargin0cm \textwidth16cm

\begin{document}

\title[]{Macdonald polynomials and extended Gelfand--Tsetlin graph}

\author{Grigori Olshanski${}^{1,2,3}$}
\address{${}^1$Institute for Information Transmission Problems of the Russian Academy of Sciences, Moscow, Russia.\\ ${}^2$Skolkovo Institute of Science and Technology, Moscow, Russia.\\
${}^3$ National Research University Higher School of Economics, Moscow, Russia.\\{\rm email: olsh2007@gmail.com}}

\begin{abstract}

Using Okounkov's $q$-integral representation of Macdonald polynomials we construct an infinite sequence $\Om_1,\Om_2,\Om_3,\dots$ of countable sets linked by transition probabilities from $\Om_N$ to $\Om_{N-1}$ for each $N=2,3,\dots$. The elements of the sets $\Om_N$ are the vertices of the extended Gelfand--Tsetlin graph, and the transition probabilities depend on the two Macdonald parameters, $q$ and $t$. These data determine a family of Markov chains, and the main result is the description of their entrance boundaries. This work has its origin in asymptotic representation theory. In the subsequent paper, the main result is applied to large-$N$ limit transition in $(q,t)$-deformed $N$-particle beta-ensembles. 
\end{abstract}

\keywords{Macdonald polynomials, Okounkov's q-integral formula, Gelfand--Tsetlin graph, Markov chains, entrance boundary}

\date{}

\maketitle

\tableofcontents

\section{Introduction}

One of the basic problems of representation theory is the study of irreducible spherical unitary representations and their spherical functions. It is well known that spherical functions on classical symmetric spaces of compact type are expressed through multivariate orthogonal polynomials --- the Jack and Jacobi symmetric polynomials with certain special values of parameters. 

The notion of spherical function also makes sense for infinite-dimensional symmetric spaces of the form
$$
G_\infty/K_\infty =\varinjlim G_N/K_N,
$$
where
$$
\cdots \to G_N/K_N \to G_{N+1}/K_{N+1}\to\cdots\,.
$$
is a chain of nested finite-dimensional  symmetric spaces. 

There are 10 infinite series $\{G_N/K_N\}$ of compact classical symmetric spaces of growing rank, with natural embeddings $G_N/K_N \to G_{N+1}/K_{N+1}$. For each such series, there are plenty of indecomposable spherical functions on $G_\infty/K_\infty$ indexed by  countably many continuous parameters. It turns out that the description of spherical functions on $G_\infty/K_\infty$ is equivalent to finding the entrance boundary of a Markov chain obtained as a \emph{dualization} of the chain 
$\{G_N/K_N\}$.

This reformulation is important for (at least) two reasons: 

\smallskip

(1) the initial problem setting can be extrapolated to the case of Jack/Jacobi symmetric polynomials with \emph{general} parameters;

(2)   the entrance boundary of the resulting Markov chain can be found by tools of algebraic combinatorics   (see Okounkov and the author \cite{OO-IMRN}, \cite{OO-2006}).

\smallskip
 
The goal of the present paper is to move up the theory to the level of Macdonald polynomials. The main results are three theorems denoted as Theorem A, Theorem B, and Theorem C. 

\medskip

$\bullet$ In Theorem A (see section \ref{results1}) we construct a certain Markov chain depending on $q$ and $t$; we call it the \emph{extended Macdonald chain}. The theorem is deduced from Okounkov's  $q$-integral representation of Macdonald polynomials \cite{Ok-CM}.

\smallskip

$\bullet$ Theorem B (see section \ref{results2}) describes the entrance boundary of the extended Macdonald  chain. 

\smallskip

$\bullet$ Theorem C (see section \ref{results3}) is an approximation theorem. It shows that each probability measure on the boundary of the extended Macdonald chain can be obtained, in a canonical way, via a large-$N$ limit transition from random $N$-particle systems.

\medskip

These results are applied in \cite{Ols-MacdonaldTwo} to constructing a $(q,t)$-deformed combinatorial version of harmonic analysis on $U(\infty)$.   

We proceed to a more detailed description of the contents of the paper. 

\subsection{Formalism of projective chains}\label{sect1.1}
Recall that a (rectangular) matrix is said to be \emph{stochastic} if its entries are nonnegative and the row sums are equal to $1$. Stochastic matrices are a particular case of \emph{Markov kernels} (Meyer \cite{Meyer}). A Markov kernel is a map $L: S\to \P(S')$, where $S$ and $S'$ are two Borel (=measurable) spaces and $\P(S')$ denotes the space of probability measures on $S'$. When both $S$ and $S'$ are finite or countable sets, $L$ is given by a stochastic matrix of format $S\times S'$. That is, its rows are indexed by $S$ and the columns are indexed by $S'$. 

Informally, one can treat $L$ as  a `generalized map' from $S$ to $S'$. We denote such a surrogate of map by a dash arrow, $S\dasharrow S'$. 

By a \emph{projective chain} we mean an infinite sequence $S_1, S_2, S_3, \dots$ of finite or countable sets linked by stochastic matrices $L^2_1, L^3_2, \dots$,  where the matrix $L^N_{N-1}$ has format $S_N\times S_{N-1}$. This is symbolically represented by the diagram
\begin{equation}\label{eq1.A}
S_1\stackrel{L^2_1}{\dashleftarrow}S_2\stackrel{L^3_2}{\dashleftarrow}S_3\stackrel{L^4_3}{\dashleftarrow}\dots\,.
\end{equation}

For any projective chain one can define, in a canonical way, its \emph{boundary}. It is a Borel space $S_\infty$, which is linked to the sets $S_N$ via Markov kernels $L^\infty_N$ satisfying the relations $L^\infty_N L^N_{N-1}=\La^\infty_{N-1}$ (a composition of Markov kernels is read from left to right). The precise definition is given in section \ref{sect7.1}.  

One can regard the diagram \eqref{eq1.A} as a non-stationary Markov chain with discrete time $N$ ranging in reverse direction (from $+\infty$ to $1$), time-dependent state spaces $S_N$, and transition kernels $L^N_{N-1}$; then $S_\infty$ is identified with what may be called the \emph{entrance boundary} of that chain, or the set of \emph{extreme entrance laws} in the terminology of Dynkin \cite[\S10.1]{Dynkin}. A comprehensive discussion is contained in Winkler's monograph \cite[ch. 4]{Winkler}. 

The boundary $S_\infty$ can also be interpreted as the inverse limit of \eqref{eq1.A} in the category-theoretical sense. The corresponding category is formed by standard Borel spaces (as objects) and Markov kernels (as morphisms), see \cite[ch. 4]{Winkler}. 

Here is an illustrative example. 

\begin{example}[Boundary of Pascal graph]
Let $S_N:=\{0,1,\dots, N\}$ and $L^N_{N-1}$ be the two-diagonal matrix with the entries 
$$
L^N_{N-1}(n,n-1)=\frac nN, \qquad L^N_{N-1}(n,n):=\frac{N-n}N, 
$$
and all other entries being equal to $0$. One can show that the boundary $S_\infty$ of this chain is the closed interval $[0,1]$ with the  Markov kernels from $S_\infty\dasharrow S_N$, $N=1,2,\dots,$  given by 
$$
L^\infty_N(x,n)=\binom Nn x^n(1-x)^{N-n}, \qquad n=0,1,\dots,N.
$$
This result is in fact equivalent to classical de Finetti theorem (\cite[section 5]{BO-2017}). It is also related to other classical topics --- Bernstein polynomials and Hausdorff moment problem (Feller \cite[ch. VII]{Feller}).
\end{example}

Even in this simple example, finding the boundary requires some work. For more sophisticated projective chains, this task may require considerable efforts.

\subsection{Projective chains in representation theory}

In the body of the text there are no group representations, we work exclusively with symmetric polynomials and symmetric functions. However, because the extended Macdonald chain originated as a generalization of some representation-theoretic constructions, it makes sense to tell a little about these constructions --- otherwise the problem setup will not be sufficiently motivated. 

Let $G\supset K$ be finite or separable compact groups forming a \emph{Gelfand pair} (Bump \cite[\S45]{Bump}); for instance, one may suppose that $G/K$ is a \emph{symmetric space of compact type} (Helgason \cite{H}). The \emph{spherical dual of $(G,K)$} is the set of indecomposable positive definite normalized functions $G\to\C$, constant on double $K$-cosets. This is a finite or countable set, which we denote by $\Om(G,K)$. It parameterizes the irreducible \emph{spherical} representations --- the irreducible unitary representations of $G$ possessing a $K$-invariant vector. 

By a \emph{morphism  $(G,K)\to(G',K')$ of Gelfand pairs} we mean a group homomorphism $\phi:G\to G'$ such that $\phi(K)\subseteq K'$. For noncommutative groups  one cannot define a natural dual map $\phi^*$ from $\Om(G',K')$ to $\Om(G,K)$. However, there is a reasonable substitute ---  a `generalized map'  $\Om(G',K')\dasharrow \Om(G,K)$ given by a stochastic matrix of format $\Om(G',K')\times\Om(G,K)$. 

This stochastic matrix, which we denote by $L^{G'}_G$,  is defined in the following way. Given a function $\om'\in\Om(G',K')$, its composition with $\phi:G\to G'$ produces a positive definite normalized function $\om'\circ\phi$ on $G$; the latter is uniquely written as a convex combination of indecomposable spherical functions of $(G,K)$ with certain coefficients $L^{G'}_G(\om',\om)$,
\begin{equation}\label{eq1.D}
\om'\circ\phi=\sum_{\om\in\Om(G,K)}L^{G'}_G(\om',\om)\om,
\end{equation} 
and these coefficients are just the entries of the matrix $L^{G'}_G$. 

Now suppose we are given an infinite sequence 
\begin{equation}\label{eq1.B}
(G_1,K_1)\to (G_2,K_2)\to (G_3,K_3)\to\dots
\end{equation}
of growing (finite or compact) Gelfand pairs. It gives rise to the dual projective chain 
\begin{equation}\label{eq1.C}
\Om(G_1,K_1)\dashleftarrow \Om(G_2,K_2)\dashleftarrow \Om(G_3,K_3)\dashleftarrow\dots
\end{equation}
which has a certain boundary. 

On the other hand, consider the inductive limit groups $G_\infty:=\varinjlim G_N$ and $K_\infty:=\varinjlim K_N$. In general, these are no longer compact groups, but the pair $(G_\infty,K_\infty)$ is still a Gelfand pair (in the sense explained in Olshanski \cite{Ols-1990}). Next,  the corresponding spherical dual $\Om(G_\infty,K_\infty)$ is defined in exactly the same way as above, and the elements of $\Om(G_\infty,K_\infty)$ parametrize the irreducible spherical representations of $(G_\infty,K_\infty)$. 

As pointed out above, there exist 10 series of classical symmetric spaces $G_N/K_N$ of compact type, and for each series, the spherical dual $\Om(G_\infty,K_\infty)$ is known (Olshanski \cite{Ols-1990}, Pickrell \cite{Pickrell}).

It is a direct consequence of definitions that  $\Om(G_\infty,K_\infty)$ can be identified with the boundary of the projective chain \eqref{eq1.C}. Thus, this boundary has a representation-theoretic meaning.

\subsection{Projective chains related to Jack polynomials}
Introduce some notation: 

\smallskip

$\bullet$ $\Sign(N)$ is the set of \emph{signatures of length $N$}; these are vectors $a=(a_1,\dots,a_N)\in\Z^N$ such that $a_1\ge\dots\ge a_N$ (the coordinates $a_i$ may be of arbitrary sign);

$\bullet$ $\tau$ is an arbitrary positive real number --- the parameter of Jack polynomials ($\tau$ is inverse to the parameter $\al$ used in Macdonald's book \cite{M});

$\bullet$ $P_a(u_1,\dots,u_N;\tau)$ is the Jack polynomial indexed by a signature $a\in\Sign(N)$ (in general, $P_a(\ccdot;\tau)$ is a Laurent polynomial; the definition given in \cite[ch. VI, \S10]{M} extends to the case of Laurent polynomials without difficulty);

$\bullet$ $u_1,\dots,u_N$ are variables;

$\bullet$ $(1^N)=(1,\dots,1)$ ($N$ times). 

\smallskip

For three special values $\tau=\frac12, 1, 2$, the normalized Jack polynomials 
$$
\frac{P_a(u_1,\dots,u_N;\tau)}{P_a((1^N); \tau)}
$$
give indecomposable spherical functions for the symmetric spaces
$$
U(N)/O(N), \qquad U(N)\times U(N)/ U(N), \qquad U(2N)/Sp(N),
$$
respectively. In terms of Jack polynomials, the expansion \eqref{eq1.D} takes the form 
\begin{multline}\label{eq1.E}
\left.\frac{P_a(u_1,\dots,u_N;\tau)}{P_a((1^N); \tau)}\right|_{u_N=1}\\
=\sum_{b\in\Sign(N-1)} L^N_{N-1}(a,b;\tau)\frac{P_b(u_1,\dots,u_{N-1};\tau)}{P_b((1^{N-1});\tau)}, \qquad a\in\Sign(N),
\end{multline}
and the coefficients $L^N_{N-1}(a,b;\tau)$ defined by \eqref{eq1.E} form a matrix $L^N_{N-1}$ of format $\Sign(N)\times\Sign(N-1)$ depending on $\tau$. 

A remarkable fact is that the coefficients $L^N_{N-1}(a,b;\tau)$ are nonnegative not only for special values $\tau=\frac12, 1,2$ corresponding to spherical functions, but also for any $\tau>0$. This implies that the matrices $L^N_{N-1}$ are stochastic matrices for any $\tau>0$ (the fact that the row sums equal $1$ is obvious). 

In this way we obtain a projective chain with the states $S_N=\Sign(N)$, $N=1,2,\dots$, and parameter $\tau>0$; let us call it the \emph{Jack projective chain}.  Its boundary was described in Okounkov--Olshanski \cite{OO-IMRN}. 

For the remaining $10-3=7$ series of symmetric spaces, the spherical functions are expressed through  Heckman--Opdam's multivariate Jacobi polynomials  (see e.g. Heckman's lecture notes in \cite{Heckman}) with special values of parameters, and the definition \eqref{eq1.D}  can again be extrapolated to general values of parameters. The corresponding boundary problem was solved in Okounkov--Olshanski \cite{OO-2006}.

\subsection{The Macdonald chain}\label{sect1.4}

There is a natural extension of \eqref{eq1.E}, where the Jack polynomials are replaced by the Macdonald polynomials and the specialization at $(1^N)$ is replaced by that at $(1,t^{-1},\dots,t^{1-N})$: 
\begin{multline}\label{eq1.F}
\left.\frac{P_a(u_1,\dots,u_N;q,t)}{P_a(1,t^{-1},\dots,t^{1-N};q,t)}\right|_{u_N=t^{1-N}}\\
=\sum_{b\in\Sign(N-1)} L^N_{N-1}(a,b;q,t)\frac{P_b(u_1,\dots,u_{N-1};q,t)}{P_b(1,t^{-1},\dots,t^{2-N};q,t)}, \qquad a\in\Sign(N).
\end{multline}

An explicit expression for the coefficients $L^N_{N-1}(a,b;q,t)$ is provided by the \emph{branching rule for Macdonald polynomials} \cite[ch. VI, (7.14$'$)]{M}. In particular, the coefficients vanish unless the signatures $a$ and $b$ \emph{interlace}, meaning that
$$
a_i\ge b_i\ge a_{i+1}, \qquad 1\le i\le N-1.
$$
In this case we write $b\prec a$ or $a\succ b$.  
(Macdonald deals with ordinary (not Laurent) polynomials, which are indexed by partitions, but extending the results that we need to the more general case of Laurent polynomials, which are indexed by signatures, presents no difficulty.)

Suppose $0<q<1$, $0<t<1$ (equally well one could take $q>1$, $t>1$); then the coefficients $L^N_{N-1}(a,b;q,t)$ are nonnegative. Next, from \eqref{eq1.F} it is evident that 
$$
\sum_{b\in\Sign(N-1)}L^N_{N-1}(a,b;q,t)=1, \qquad \forall a\in\Sign(N).
$$
It follows that the matrices $L^N_{N-1}(q,t)$ with the entries $L^N_{N-1}(a,b;q,t)$ are stochastic matrices. In this way we obtain a  projective chain,
\begin{equation}\label{eq1.L}
\Sign(1)\stackrel{L^2_1(q,t)}{\dashleftarrow}\Sign(2)\stackrel{L^3_2(q,t)}{\dashleftarrow}\Sign(3)\stackrel{L^4_3(q,t)}{\dashleftarrow}\cdots, 
\end{equation}
which we call the \emph{Macdonald chain}; it depends on the two Macdonald parameters, $q$ and $t$, ranging over the open interval $(0,1)$.

Because of the limit relation 
$$
P_a(u_1,\dots,u_N;\tau)=\lim_{q\to 1}P_a(u_1,\dots,u_N;q,q^\tau)
$$
(\cite[ch. VI, \S10]{M}) we have  
$$
L^N_{N-1}(a,b;\tau)=\lim_{q\to1} L^N_{N-1}(a,b;q,q^\tau),
$$
so that the stochastic matrices defined by \eqref{eq1.E} are a limit case of the stochastic matrices defined by \eqref{eq1.F}. 

The first work related to the Macdonald chain was that of  Gorin \cite{G-AM}. He examined the special case of equal parameters, $q=t$, and obtained (among other things) the description of the boundary. In the case $q=t$ the Macdonald polynomials become the Schur polynomials, as do the Jack polynomials with $\tau=1$. However, as shown in \cite{G-AM}, the replacement of the specialization at $(1,\dots,1)$ by that at $(1,q^{-1},\dots,q^{1-N})$ drastically changes the structure of the boundary: it becomes a totally disconnected space.

Then Cuenca \cite{Cuenca} described the boundary of the Macdonald chain in a more general case, for   $t=q^\tau$, where $\tau=1,2,3,\dots$\,.

In another direction, Sato \cite{Sato1}, \cite{Sato2} linked Gorin's results to characters of a quantum version of the group $U(\infty)$.

\subsection{Motivation for further generalization}\label{sect1.5}

The approach of the present work allows to describe the boundary of the Macdonald chain \eqref{eq1.L} for arbitrary values $q,t\in(0,1)$. However, this is only a side result, as our concern is to study an \emph{extension} of the chain \eqref{eq1.L}. 

The need of such an extension was explained in the joint work by Gorin  and the author \cite{GO-2016} about a $q$-version of the so called \emph{zw-measures}. In the initial version, the zw-measures arose from the problem of harmonic analysis on the infinite-dimensional unitary group \cite{Ols-2003}, \cite{BO-2005}; they form a four-parameter family of probability measures on the boundary of the Jack chain with $\tau=1$. The Jack deformation of the zw-measures was constructed in \cite{Ols-2003a}, so it was natural to ask if there exists a $q$-deformation, too. 

Initially we tried to construct $q$-deformed zw-measures in the framework of Gorin's paper \cite{G-AM}, but that attempt failed. Then we understood the reason: the desired result can be achieved only after enlarging the sets $\Sign(N)$.   

We proceed to necessary definitions.

\subsection{Double signatures and extended Gelfand Tsetlin graph}

\begin{definition}\label{def1.DSign}
By a \emph{double signature of length $N$} we mean an ordered pair  of signatures $(a^+,a^-)$ such that $a^+\in\Sign(k)$, $a^-\in\Sign(l)$, and  $k+l=N$. The set of all such pairs is denoted by $\DSign(N)$. We do not exclude the case when $k$ or $l$ equals $0$. Thus, $\DSign(N)$ is the disjoint union of the sets
$$
\Sign(N)\times\{\varnothing\}, \quad \Sign(N-1)\times\Sign(1),\; \dots,\; \Sign(1)\times\Sign(N-1), \quad \{\varnothing\}\times \Sign(N),
$$
where $\{\varnothing\}$ is interpreted as a singleton (the `empty signature'). 
\end{definition}

We identify $\Sign(N)$ with the subset $\Sign(N)\times\{\varnothing\}\subset \DSign(N)$. Thus, we may regard $\DSign(N)$ as an extension of $\Sign(N)$.

\begin{definition}\label{def1.interlace}
We say that the double signatures $(a^+,a^-)\in\DSign(N)$ and $(b^+,b^-)\in\DSign(N-1)$ \emph{interlace} if one of the following two conditions holds:

(i) $(a^+,a^-)\in\Sign(k)\times \Sign(l)$ with $k>0$, $(b^+,b^-)\in\Sign(k-1)\times \Sign(l)$, and
$$
a^+_1\ge b^+_1\ge\dots \ge a^+_{k-1}\ge b^+_{k-1}\ge a^+_{k}, \qquad a^-_1\ge b^-_1\ge\dots \ge a^-_{l}\ge b^-_{l};
$$

(ii) $(a^+,a^-)\in\Sign(k)\times \Sign(l)$ with $l>0$, $(b^+,b^-)\in\Sign(k)\times \Sign(l-1)$, and
$$
a^+_1\ge b^+_1\ge\dots \ge a^+_k\ge b^+_k, \qquad a^-_1\ge b^-_1\ge\dots \ge a^-_{l-1}\ge b^-_{l-1}\ge a^-_l.
$$
\end{definition}

Equivalently: $b^+\prec a^+$ and $b^-\prec a^-$ with the understanding that if $b^\pm$ has the same length as $a^\pm$, then  $a^\pm$ should be replaced by $a^\pm\cup\{-\infty\}$.  

We write the interlacement relation for double signatures as $(b^+,b^-)\prec (a^+,a^-)$ or $(a^+,a^-)\succ (b^+,b^-)$

Recall that the \emph{Gelfand--Tsetlin graph} is the graded graph whose vertex set is the disjoint union $\bigsqcup_{N=1}^\infty \Sign(N)$ and the edges are formed by the pairs $b\prec a$ of interlacing signatures.

\begin{definition}\label{def1.extGT}
The \emph{extended Gelfand--Tsetlin graph} is the graded graph whose vertex set is the disjoint union $\bigsqcup_{N=1}^\infty \DSign(N)$ and the edges are formed by the pairs $(b^+,b^-)\prec (a^+,a^-)$ of interlacing double signatures. 
\end{definition}

This definition is equivalent to the one in \cite{GO-2016}. 

The embedding $\Sign(N)\to \DSign(N)$ via the map $a\mapsto (a,\varnothing)$ induces an embedding  of the conventional Gelfand--Tsetlin graph into the extended Gelfand--Tsetlin graph. 

Note that if both components $a^+$ and $a^-$ are nonempty, then there are infinitely many vertices $(b^+,b^-)\prec(a^+,a^-)$.  This is in sharp contrast with the conventional Gelfand--Tsetlin graph. 

Note also that the extended Gelfand-Tsetlin graph \emph{is not} the product of two copies of the conventional Gelfand-Tsetlin graph.

\subsection{Point configurations attached to double signatures}

\begin{definition}\label{def1.C}
We fix two parameters $q,t\in(0,1)$ and two additional parameters $\zeta_+>0$ and $\zeta_-<0$. To an arbitrary double signature $(a^+,a^-)\in\DSign(N)$, where $a^+\in\Sign(k)$, $a^-\in\Sign(l)$, $k+l=N$,  we assign an $N$-point configuration $X_N(a)=X_N(a^+,a^-)\subset\R^*$, as follows:
\begin{equation}\label{eq1.G}
X_N(a):=\{\zeta_+q^{-a^+_i}t^{i-1}: i=1,\dots,k\}\cup\{\zeta_-q^{-a^-_i}t^{i-1}: i=1,\dots,l\}.
\end{equation}
For each $N=1,2,3\dots$, we denote by $\Om_N$ the set of all configurations of the form \eqref{eq1.G} with $k+l=N$. 
\end{definition}

For instance, let $N=5$, $k=3$, $l=2$, $a^+=(5,3,1)$, and $a^-=(4,2)$. Then the configuration \eqref{eq1.G} is the set
$$
\{\zeta_+ q^{-5}, \; \zeta_+q^{-3}t, \; \zeta_+q^{-1}t^2\}\cup\{\zeta_-q^{-4}, \; \zeta_- q^{-2}t\}.
$$
Or, listing the points in the ascending order,
$$
\{\zeta_-q^{-4}, \; \zeta_- q^{-2}t, \; \zeta_+q^{-1}t^2,\; \zeta_+q^{-3}t, \; \zeta_+ q^{-5}\}.
$$

Let us emphasize that the definition of $X_N(a)$ and $\Om_N$ depends on the quadruple of parameters $(q,t,\zeta_+,\zeta_-)$, but we suppress them from the notation for the sake of brevity.

Note that distinct double signatures produce distinct configurations. This will enable us to switch from double signatures to point configurations and back. 

\subsection{The special case $t=q^\tau$, $\tau=1,2,3,\dots$}

In this case the description of $\Om_N$ simplifies. 

Namely, consider the  \emph{two-sided $q$-lattice}
\begin{equation}\label{eq1.H}
\L:=\{\zeta_+ q^n: n\in\Z\}\cup\{\zeta_- q^n: n\in\Z\}\subset\R^*.
\end{equation} 
If $t=q$, then the configurations $X\in\Om_N$ are precisely the $N$-point subsets of $\L$. 
Next, if $t=q^\tau$ with $\tau=2,3,\dots$, then the configurations $X\in\Om_N$ are the $N$-point subsets of $\L$ subject to the following constraint: between any two neighboring points of $X$ there are at least $\tau-1$ unoccupied nodes of the lattice. 

\subsection{Construction of extended Macdonald chain (Theorem A)}\label{results1}

Let $\Y$ denote the set of all partitions; as in \cite{M}, we identify partitions with the corresponding Young diagrams. For $N=1,2,\dots$, let  $\Y(N)\subset\Y$ denote the subset of partitions of length at most $N$. We have
$$
\Y(1)\subset \Y(2)\subset\Y(3)\subset\dots, \qquad \bigcup_{N=1}^\infty\Y(N)=\Y.
$$
Note that $\Y(N)$  may be viewed as a subset of $\Sign(N)$. 

We denote by $P_{\nu\mid N}(x_1,\dots,x_N;q,t)$ the $N$-variate Macdonald polynomial with parameters $(q,t)$ and the index $\nu\in\Y(N)$. Every symmetric polynomial $f$ in $N$ variables may be viewed as a function $f(X)$ on $\Om_N$; in particular, the function corresponding to a Macdonald polynomial is written as $P_{\nu\mid N}(X;q,t)$, where $X\in\Om_N$. We denote by $[X]$ the smallest closed interval of $\R$ containing $X$.  

For $z\in\C$ and a partition (=Young diagram) $\nu$ we set
\begin{equation}\label{eq1.J}
(z;q,t)_\nu:=\prod_{(i,j)\in\nu}(1-zq^{j-1}t^{1-i}),
\end{equation}
where $(i,j)$ denotes the box on the intersection of the $i$th row and $j$th column.

Our first main result is 

\begin{theoremA}
Let $q,t\in(0,1)$ and  $\zeta_+>0$, $\zeta_-<0$ be fixed. 
For each $N\ge2$ there exists a unique stochastic matrix $\La^N_{N-1}$ of format $\DSign(N)\times \DSign(N-1)$,  such that the following two conditions hold.

{\rm(i)} The entries $\La^N_{N-1}(a,b)$ of $\LaN$ are strictly positive if $b\prec a$ and are equal to\/ $0$ otherwise.

{\rm(ii)} For any $a\in\DSign(N)$ and any $\nu\in\Y(N-1)$,
\begin{equation}\label{eq1.I}
\sum_{b\in\DSign(N-1)}\LaN(a,b) \frac{P_{\nu\mid N-1}(X_{N-1}(b))}{(t^{N-1};q,t)_\nu}=\frac{P_{\nu\mid N}(X_N(a))}{(t^N;q,t)_\nu}.
\end{equation}
\end{theoremA}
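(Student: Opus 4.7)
The plan is to construct $\LaN$ directly from Okounkov's $q$-integral representation of Macdonald polynomials \cite{Ok-CM}. In its branching form, that identity writes $P_{\nu\mid N}(x_1,\dots,x_N;q,t)$ as a multivariate $q$-integral of $P_{\nu\mid N-1}(y_1,\dots,y_{N-1};q,t)$ against an explicit $q$-Pochhammer kernel, supported on the region where the vectors $(y_i)$ and $(x_i)$ interlace. The crux is that when I specialize $(x_1,\dots,x_N)=X_N(a)$ with $a\in\DSign(N)$, each one-variable $q$-integration collapses to a geometric sum on the $q$-lattice generated by the corresponding $x_i$'s, and the resulting multidimensional discretization is parameterized exactly by the set $\{b\in\DSign(N-1):b\prec a\}$, with $X_{N-1}(b)$ as the discrete argument. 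Reading off the coefficients of this sum, and absorbing the appropriate prefactors involving $(t^N;q,t)_\nu$ and $(t^{N-1};q,t)_\nu$, will define the candidate entries $\LaN(a,b)$ and simultaneously build identity \eqref{eq1.I} into the construction.

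Given this candidate, existence reduces to two verifications. First, strict positivity of $\LaN(a,b)$ for $b\prec a$: each factor in the product formula obtained from the kernel has the shape $1-q^m t^n(\zeta_\varepsilon/\zeta_{\varepsilon'})$ with explicit integer exponents $m,n$, and the joint hypotheses $q,t\in(0,1)$ and $\zeta_+>0$, $\zeta_-<0$ make each such factor (and hence the whole product) strictly positive. Second, the row-sum identity: specializing \eqref{eq1.I} to $\nu=\varnothing$ gives $\sum_b \LaN(a,b)=1$, since $P_\varnothing\equiv 1$ and the empty-product convention in \eqref{eq1.J} gives $(z;q,t)_\varnothing=1$.

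Uniqueness is the more delicate point. Suppose $\wt\LaN$ is a second stochastic matrix satisfying (i) and (ii), and fix $a$. Then $\mu:=\LaN(a,\ccdot)-\wt\LaN(a,\ccdot)$ is a signed measure of total variation at most $2$ on $\{b:b\prec a\}\subset\DSign(N-1)$, satisfying $\int P_{\nu\mid N-1}(X_{N-1}(\ccdot))\,d\mu=0$ for every $\nu\in\Y(N-1)$. Since the Macdonald polynomials $\{P_{\nu\mid N-1}\}_{\nu\in\Y(N-1)}$ form a $\C$-basis of the algebra of symmetric polynomials in $N-1$ variables, every symmetric-polynomial moment of the pushforward of $\mu$ under $b\mapsto X_{N-1}(b)$ vanishes. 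Because that pushforward map is injective (distinct double signatures produce distinct configurations, as noted after Definition \ref{def1.C}) and lands in the $q$-geometric two-sided lattice $\L$ of \eqref{eq1.H}, one expects the associated moment problem to be determinate and hence $\mu=0$.

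The main obstacle is precisely this final moment-determinacy step, which is trivial when $\{b:b\prec a\}$ is finite (immediate linear algebra) but requires genuine analytic work when both $a^+$ and $a^-$ are nonempty, since then $\{b:b\prec a\}$ is infinite and the coordinates $\zeta_\pm q^{-b^\pm_i}t^{i-1}$ of $X_{N-1}(b)$ are unbounded in one direction. My intended route is to exploit the rapid $q$-geometric spacing of the lattice $\L$, together with the uniform bound $\|\mu\|_{TV}\le2$, to verify a Carleman-type criterion; alternatively, one can test $\mu$ against an explicit analytic generating family such as $b\mapsto\prod_{x\in X_{N-1}(b)}(1-zx)$, whose expansion in $z$ is a linear combination of Macdonald polynomials and which separates configurations as an analytic function of $z$.
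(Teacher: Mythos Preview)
Your overall strategy is the paper's, but the construction step hides a genuine obstruction that you have not addressed, and this is precisely where the paper does its real work.

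\textbf{The singularity obstruction.} You propose to specialize Okounkov's $q$-integral identity directly at $(x_1,\dots,x_N)=X_N(a)$ and read off the weights from the resulting sum. For general $q,t\in(0,1)$ this fails. When $t\notin q^{\Z}$ the consecutive points $x_i,x_{i+1}$ of $X_N(a)$ lie on \emph{different} $q$-orbits (they differ by a power of $t$), so $\int_{x_{i+1}}^{x_i}(\,\cdot\,)\,d_qw$ does not collapse to a single finite geometric sum: it is the difference of two infinite series over $w=x_iq^m$ and $w=x_{i+1}q^m$, $m\ge0$. Expanding the full multiple $q$-integral accordingly indexes the summands by $(\eps,m)\in\{0,1\}^{N-1}\times\Z_{\ge0}^{N-1}$, each carrying the kernel factor
\[
R_{\eps,m}(Z;q,t)=\prod_{r,s}\frac{(z_{r+\eps(r)}q^{m_r+1}/z_s;q)_\infty}{(z_{r+\eps(r)}q^{m_r}t/z_s;q)_\infty}\,\prod_{i\ne j}\frac{(z_it/z_j;q)_\infty}{(z_iq/z_j;q)_\infty},
\]
and for $N\ge3$ this meromorphic function can be genuinely singular at $Z=X_N(a)$: the paper exhibits the explicit example $N=3$, $\eps=(1,0)$, $m=(0,0)$, $X=(1,t,t^2)$. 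Since individual summands blow up, you cannot evaluate termwise at $X$, and there is no evident way to identify which terms survive as the weights indexed by $b\prec a$. The paper's resolution is a carefully ordered step-by-step limit ($z_1\to x_1$, then $z_2\to x_2$, \dots, proceeding inward from the outermost points of $X$); under this regime each $R_{\eps,m}$ has a finite limit, nonzero exactly for an ``admissible'' set of $(\eps,m)$ which is then put in bijection with $\{b:\,b\prec a\}$. This limit procedure, together with the verification that it is independent of the approach direction and that the surviving terms are positive, is the substance of the existence proof. (Your picture \emph{is} correct in the special case $t=q^\tau$ with $\tau\in\{1,2,\dots\}$: then all points sit on one two-sided $q$-lattice, the $q$-integrals do collapse, and direct substitution works; the paper handles this case separately.)

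\textbf{Two smaller corrections.} Your positivity argument is wrong in detail: many individual factors $1-x_iq^r/x_j$ in the kernel are strictly \emph{negative} when $|x_i|>|x_j|$; positivity comes from pairing numerator and denominator factors and checking that equally many are negative, which the paper does via a recurrence in $N$. For uniqueness you have misidentified the difficulty: the configurations $X_{N-1}(b)$ with $b\prec a$ are not unbounded --- as $b^\pm_i\to-\infty$ the corresponding point $\zeta_\pm q^{-b^\pm_i}t^{i-1}$ tends to $0$, so all of them lie in the closed interval spanned by $X_N(a)$. Both candidate row measures are therefore supported on a compact set of configurations; symmetric polynomials separate points there, and Stone--Weierstrass gives density in continuous functions, so vanishing of all Macdonald-polynomial moments already forces your signed measure $\mu$ to be zero. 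No Carleman criterion is needed.
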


\noindent\emph{Comments} 1.  An explicit expression for the matrix entries $\LaN(a,b)$ is given in subsection \ref{sect5.10}. Like $X_N(a)$, it depends on $(q,t,\zeta_+,\zeta_-)$, but we suppress these parameters from the notation for the sake of brevity. 
\smallskip

2. The matrix $\LaN$ extends the matrix $L^N_{N-1}(q,t)$  from \eqref{eq1.F},  in the sense that $L^N_{N-1}(q,t)$ can be identified with a submatrix (a diagonal block) of the matrix $\LaN$. This follows from the computation in Section \ref{sect2}. Note that the claim is not evident from the comparison of \eqref{eq1.I} with \eqref{eq1.F}: although both conditions are written in terms of Macdonald polynomials, they look quite different.
\smallskip

3. As pointed out in subsection \ref{sect1.4} above, the fact that the coefficients in the expansion \eqref{eq1.F} form a stochastic matrix immediately follows from the branching rule for the Macdonald polynomials.  In the context of Theorem A, the definition of $\LaN$ relies on Okounkov's $q$-integral representation for Macdonald polynomials \cite{Ok-CM}, and the proof of the theorem that we can offer is rather long. It can be simplified when $t=q^\tau$ with $\tau=1,2,3,\dots$; this case is examined separately in Section \ref{sect4}, so that the reader may skip section \ref{sect5} if desired. In the special case $q=t$, considered in \cite{GO-2016}, the proof is direct and easy (\cite[Proposition 2.4]{GO-2016}), due to the fact that then the entries of $\LaN$ are given by a simple formula. However, the approach of \cite{GO-2016} does not apply in the two-parameter case. 
\smallskip

4. We call \eqref{eq1.I} the \emph{coherency relation} for Macdonald polynomials.

\begin{definition}\label{def1.B} 
Theorem A allows us to form the Markov chain
\begin{equation}\label{eq1.K}
\DSign(1)\stackrel{\La^2_1}{\dashleftarrow}\DSign(2)\stackrel{\La^3_2}{\dashleftarrow}\DSign(3)\stackrel{\La^4_3}{\dashleftarrow}\dots\,.
\end{equation}
We call it the \emph{extended Markov chain}. 
\end{definition}

As is seen from claim (i) of the theorem, the transition probabilities of this Markov chain are attached to the edges of the extended Gelfand--Tsetlin graph.

\subsection{The boundary of the extended Macdonald chain (Theorem B)}\label{results2}
Our purpose is to find the boundary of the extended Macdonald chain \eqref{eq1.K} in the sense of Definition \ref{def7.A}. Informally, we call it the `$(q,t)$-boundary of the extended Gelfand--Tsetlin graph'. 

\begin{definition}
(i) By an \emph{infinite signature} we mean an arbitrary infinite sequence of integers $a_1\ge a_2\ge\dots$\,. The \emph{infinite double signature} is a pair $(a^+,a^-)$ of signatures of which at least one is infinite. The set of infinite double signatures will be denoted by $\DSign(\infty)$. 

(ii) To each infinite double signature $a=(a^+,a^-)$ one assigns an infinite point configuration $X_\infty(a)\subset\R^*$: it is defined as in \eqref{eq1.G}: the only change is that one of the indices $i, j$ (or both) will range over the whole set $\{1,2,3,\dots\}$.  
\end{definition}

Note that $\DSign(\infty)$ has the power of the continuum. The following is our second main result. 

\begin{theoremB}
The boundary of the extended Macdonald chain can be identified, in a natural way, with the set\/ $\DSign(\infty)$ of infinite double signatures. Under this identification, the Markov kernels $\La^\infty_N:\DSign(\infty)\dasharrow \DSign(N)$ can be characterized via the relations 
\begin{equation}\label{eq1.I1}
\sum_{b\in\DSign(N)}\La^\infty_N(a,b) \frac{P_{\nu\mid N}(X_N(b);q,t)}{(t^N;q,t)_\nu}=P_\nu(X_\infty(a);q,t), \qquad \forall\nu\in\Y(N),
\end{equation}
where $P_\nu(\ccdot;q,t)$ is the Macdonald symmetric function indexed by $\nu$. 
\end{theoremB}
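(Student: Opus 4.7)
The strategy is to pass to an $N\to\infty$ limit in the coherency relation \eqref{eq1.I}. For $a\in\DSign(N)$ and $\nu\in\Y(N)$, write
$$\phi^N_\nu(a):=\frac{P_{\nu\mid N}(X_N(a);q,t)}{(t^N;q,t)_\nu},$$
so that Theorem A reads $\La^N_{N-1}\phi^{N-1}_\nu=\phi^N_\nu$ for every $\nu\in\Y(N-1)$. The target boundary value is $\phi^\infty_\nu(a):=P_\nu(X_\infty(a);q,t)$, $a\in\DSign(\infty)$, and relation \eqref{eq1.I1} is exactly $\La^\infty_N\phi^N_\nu=\phi^\infty_\nu$. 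I would organize the argument in three parts: (A) show that $\phi^\infty_\nu(a)$ is a well-defined real number; (B) construct the kernels $\La^\infty_N(a,\ccdot)$ and verify Markov compatibility with the chain \eqref{eq1.K}; (C) show that the resulting map from $\DSign(\infty)$ to coherent systems is a Borel bijection onto the extreme points, which by Definition \ref{def7.A} is what it means for $\DSign(\infty)$ to be the boundary.

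For (A), the monotonicity $a^\pm_1\ge a^\pm_2\ge\cdots$ combined with $0<q,t<1$ makes the points $\zeta_\pm q^{-a^\pm_i}t^{i-1}$ decay geometrically as $i\to\infty$; hence each power sum $p_k(X_\infty(a))$ converges absolutely, and with it every Macdonald symmetric function $P_\nu(X_\infty(a);q,t)$. For (B), fix $a\in\DSign(\infty)$ and choose a sequence $a^{(M)}\in\DSign(M)$, $M\ge N$, whose configurations $X_M(a^{(M)})$ converge to $X_\infty(a)$ under the natural truncation (take the top $M$ entries of $a^+$ and $a^-$). Iterating \eqref{eq1.I} from level $M$ down to level $N$ yields
$$\sum_{b\in\DSign(N)}\La^M_N(a^{(M)},b)\,\phi^N_\nu(b)=\phi^M_\nu(a^{(M)})\longrightarrow\phi^\infty_\nu(a)$$
as $M\to\infty$, for every $\nu\in\Y(N)$. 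A tightness estimate for $\{\La^M_N(a^{(M)},\ccdot)\}_M$ on $\DSign(N)$, together with the fact that $\{\phi^N_\nu:\nu\in\Y(N)\}$ separates probability measures on $\DSign(N)$ (because $\{P_{\nu\mid N}\}$ is a basis of symmetric polynomials in $N$ variables), produces a unique weak limit $\La^\infty_N(a,\ccdot)$ obeying \eqref{eq1.I1}. The Markov compatibility $\La^\infty_N\La^N_{N-1}=\La^\infty_{N-1}$ then follows from \eqref{eq1.I} by the same separation argument one level down. Injectivity of $a\mapsto(\La^\infty_N(a,\ccdot))_N$ is automatic, since the numbers $\phi^\infty_\nu(a)$ determine the power sums of $X_\infty(a)$, hence $X_\infty(a)$, hence $(a^+,a^-)$.

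The main obstacle is part (C), the exhaustion statement that every extreme coherent system is obtained this way. I would attack it through a $(q,t)$-analog of the Vershik--Kerov ergodic method. Given an extreme coherent system $(M_N)_{N\ge 1}$, coherence makes $\chi(\nu):=\sum_{b}M_N(b)\phi^N_\nu(b)$ independent of $N$, defining a positive normalized linear functional $\chi$ on the algebra of symmetric functions. Extremality, via a $0$--$1$ law for the tail $\sigma$-algebra of the projective chain \eqref{eq1.K}, should force $\chi$ to be \emph{multiplicative} on the power-sum generators in an asymptotic sense coming from the large-$N$ factorization of $\phi^N_\nu$. One then identifies such positive multiplicative functionals with evaluations of Macdonald symmetric functions at some infinite configuration: positivity forces
$$\chi(p_k)=\zeta_+^k\sum_i(q^{-a^+_i}t^{i-1})^k+\zeta_-^k\sum_i(q^{-a^-_i}t^{i-1})^k$$
for a unique double signature $a=(a^+,a^-)\in\DSign(\infty)$, matching $\chi$ with the character coming from that boundary point. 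Theorem C supplies the concrete finite-$N$ approximations needed to make this limit transition rigorous along a sequence $a^{(M)}\to a$. The hardest technical points are the tightness in (B) and the multiplicativity of $\chi$ in (C); the state spaces $\DSign(N)$ are unbounded and positivity on the two-parameter Macdonald algebra requires a genuinely $(q,t)$ argument that does not reduce to the Jack limit $q\to 1$ or the $q=t$ case treated in \cite{GO-2016}.
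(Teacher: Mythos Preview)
Your parts (A) and (B) are broadly aligned with the paper's Step~1 of Theorem~\ref{thm8.A}: construct the coherent system $\{M^{(X)}_K\}$ attached to $X\in\Om_\infty$ by a large-$M$ limit and use the fact that the Macdonald polynomials separate measures. The paper, however, does not carry out a tightness argument directly on the discrete spaces $\DSign(N)$; instead it first enlarges the chain to $\{\wt\Om_N,\wt\La^N_{N-1}\}$, where $\wt\Om_N$ is the closure of $\Om_N$ in a locally compact space $\wt\Om$, and obtains the limit by weak compactness (Proposition~\ref{prop6.A}, proof of Theorem~\ref{thm8.A}). The tightness you need in (B) is precisely what this compactification buys; on bare $\DSign(N)$ you would have to prove an a priori bound like Lemma~\ref{lemma8.A}, which the paper establishes via a residue calculation with the Cauchy identity.

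The real gap is in (C). Your route through multiplicativity of the functional $\chi$ is not the paper's argument, and as written it does not go through. First, $\chi$ is defined as a sequence of numbers indexed by $\nu$, not as an algebra homomorphism on $\Sym$; turning ``extremality implies a $0$--$1$ law implies multiplicativity on power sums'' into a proof is exactly the hard classification step, and it is not clear how the $(q,t)$ structure would produce it. Second, even granting multiplicativity, you still must show that the resulting power-sum values are realized by a configuration lying in $\Om_\infty$ (rather than an arbitrary bounded sequence), and you offer no mechanism for this. Third, your appeal to Theorem~C is circular: in the paper Theorem~C is deduced from Theorem~B (see Theorem~\ref{thm8.C}), not the other way around. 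The paper avoids all of this by a two-stage argument: it first shows that the boundary of the \emph{enlarged} chain $\{\wt\Om_N,\wt\La^N_{N-1}\}$ is all of $\wt\Om$ (Theorem~\ref{thm8.A}), using compactness plus the general approximation theorem \cite[Theorem~6.1]{OO-IMRN} to handle exhaustion; then it uses a stratification of the path space $\wt\Pi=\Pi\sqcup\bigsqcup_k\Pi_k$ (Section~\ref{sect7.3}, relying on Theorem~\ref{thm6.A}) to prove that an extreme Gibbs measure for the original chain $\{\Om_N,\LaN\}$ cannot sit on a finite stratum $\Pi_k$, which pins down the boundary as $\Om_\infty\subset\wt\Om$. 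The compactification and the support analysis of $\wt\La^N_{N-1}$ on degenerate configurations are the substitutes for your missing multiplicativity step.
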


\noindent\emph{Comments} 1. A bit more detailed form of this statement is given in Theorem \ref{thm8.B}.
\smallskip

2. The parametrization of the boundary does not depend on the parameters $(q,t, \zeta_+,\zeta_-)$ but the Markov kernels $\La^\infty_N$ do. 
\smallskip

3.  In the special case $t=q$ the result of Theorem B was proved in Gorin--Olshanski \cite[Theorem 3.12]{GO-2016}, 

4.  As a corollary of Theorem B, one can obtain a description of the boundary for the (not extended) Macdonald chain \eqref{eq1.L} with arbitrary $q,t\in(0,1)$. To do this it suffices to use the computation of section \ref{sect2}, see comment 2 to Theorem A above. This extends earlier results of Gorin \cite{G-AM} (for the case $t=q$) and of Cuenca \cite{Cuenca} (for $t=q^\tau$, $\tau=1,2,3,\dots$). Our approach is different from those of these works.

\subsection{Coherent systems of measures and the approximation theorem (Theorem C)}\label{results3}

\begin{definition}
A sequence $\{M_N: N=1,2,3,\dots\}$ of probability measures on the sets $\DSign(N)$, $N=1,2,3,\dots$, is said to be a \emph{coherent system of measures} if  for any $N\ge2$
\begin{equation}
\sum_{a\in\DSign(N)}M_N(a)\LaN(a,b)=M_{N-1}(b), \qquad \forall\, b\in\DSign(N-1).
\end{equation}
Or, in short form, $M_N\LaN=M_{N-1}$, where $M_N$ and $M_{N-1}$ are treated as row vectors. 
\end{definition}

By the very definition of the boundary, every coherent system $\{M_N\}$ gives rise to a probability measure $M_\infty$ on $\DSign(\infty)$, uniquely determined by the relations 
$$
M_\infty\La^\infty_N=M_N, \qquad N=1,2,\dots,
$$ 
where the left-hand side is the pushforward of $M_\infty$ by the Markov kernel $\La^\infty_N$ linking $\DSign(\infty)$ with $\DSign(N)$. We call $M_\infty$ the \emph{boundary measure} of the system $\{M_N\}$.

Consider the disjoint union 
$$
\wt{\DSign}:=\DSign(\infty) \sqcup \bigsqcup_{N=0}^\infty \DSign(N),
$$
where $\DSign(0)$ is a singleton, interpreted as the pair of empty signatures. We equip $\wt{\DSign}$ with a topology in the following way. 

\begin{definition}
Let us say that two signatures (finite or infinite, no matter) are \emph{$\eps$-close} (where $\eps>0$ is small), if they have the same set of coordinates exceeding $-\eps^{-1}$. Likewise, we say that two double signatures, $(a^+,a^-)$ and $(b^+,b^-)$, are \emph{$\eps$-close} if so are $a^\pm$ and $b^\pm$.
\end{definition}

The notion of $\eps$-closeness just defined makes $\wt{\DSign}$ a uniform space, and hence a topological space. It is a non-discrete locally compact space in which both $\DSign(\infty)$ and $\bigsqcup_{N=0}^\infty \DSign(N)$ are dense subsets. 

The definition of the boundary measure can be made more concrete due to the next theorem, which is our third main result. We call it the \emph{approximation theorem}.

\begin{theoremC}
For any coherent system $\{M_N\}$, the measures $M_N$ converge to the boundary measure $M_\infty$  in the weak topology of measures on the space $\wt{\DSign}$. 
\end{theoremC}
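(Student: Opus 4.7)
The plan is to reduce the theorem, via Fubini and the defining property of the boundary, to the special case of a Dirac boundary measure, and then to establish that case by exploiting the interlacing support of the kernels together with the coherency relation \eqref{eq1.I1}. Concretely, because $M_N = M_\infty \La^\infty_N$, for any bounded continuous function $f$ on $\wt{\DSign}$ one has
\[
\int f\, dM_N \;=\; \int_{\DSign(\infty)} \Bigl(\sum_{b\in\DSign(N)}\La^\infty_N(a,b)\,f(b)\Bigr)\, dM_\infty(a).
\]
The inner sum is bounded in absolute value by $\|f\|_\infty$, so if for each fixed $a\in\DSign(\infty)$ the probability measure $\La^\infty_N(a,\cdot)$ on $\wt{\DSign}$ (supported on $\DSign(N)$) converges weakly to $\delta_a$, then dominated convergence yields $\int f\,dM_N\to\int f\,dM_\infty$.

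The heart of the proof is therefore the pointwise assertion $\La^\infty_N(a,\cdot)\Rightarrow\delta_a$ for each $a\in\DSign(\infty)$. Unwrapping the uniform structure on $\wt{\DSign}$, this is equivalent to showing that for every $\eps>0$, the $\La^\infty_N(a,\cdot)$-probability of $\{b\in\DSign(N): b\text{ is not }\eps\text{-close to }a\}$ tends to $0$ as $N\to\infty$. Two ingredients feed into this. First, the factorization $\La^\infty_N=\La^\infty_{N+k}\La^{N+k}_{N+k-1}\cdots\La^{N+1}_N$ together with Theorem~A(i) shows that any $b$ in the support of $\La^\infty_N(a,\cdot)$ lies on an infinite descending interlacing chain terminating at $b$ and ``converging'' to $a$; since interlacement sandwiches coordinates level-by-level, the large positive coordinates of $a^+$, and symmetrically the large coordinates of $a^-$, are propagated downward through the chain in a tightly constrained way. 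Second, the coherency relation \eqref{eq1.I1}, applied to a finite family of Macdonald polynomials $P_{\nu\mid N}$ indexed by small partitions $\nu$, gives identities for the expectations of symmetric functions of $X_N(b)$ that pin down the leading particles of $X_N(b)$ as functions of $X_\infty(a)$; these identities should translate into tightness and then into the required matching of visible coordinates.

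The main obstacle is the infinite branching of the extended Gelfand--Tsetlin graph: unlike the classical Gelfand--Tsetlin case, once both $a^+$ and $a^-$ are nonempty a single vertex at level $N+1$ has infinitely many descendants at level $N$, so in principle mass could escape to configurations differing from $a$ in their large coordinates. Interlacing alone is not enough; one also needs a tightness bound, which I would extract from \eqref{eq1.I1} applied to the simplest Macdonald polynomials (e.g.\ row partitions $\nu=(k)$ for small $k$), yielding control over the probability that a spurious large particle appears in $X_N(b)$. Once tightness is in hand, combining it with the interlacing constraints should force the large coordinates of $X_N(b)$ to match those of $X_\infty(a)$ with probability tending to $1$, giving the required $\eps$-concentration and hence $\La^\infty_N(a,\cdot)\Rightarrow\delta_a$. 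Integrating over $a$ against $M_\infty$ then finishes the proof.
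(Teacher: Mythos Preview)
Your reduction to the Dirac case via dominated convergence is correct and is actually cleaner than the paper's argument: the paper instead first treats compactly supported $M_\infty$ and then handles general $M_\infty$ by writing $M_\infty=(1-\eps)M'_\infty+\eps M''_\infty$ with $M'_\infty$ compactly supported and letting $\eps\to0$. Your Fubini step avoids this truncation entirely.

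The gap is in your proof of the Dirac case $\La^\infty_N(a,\cdot)\Rightarrow\delta_a$. Interlacing alone only tells you that the support of $\La^\infty_N(a,\cdot)$ is contained in $\{Y:\,X_N(Y)\subset[X_\infty(a)]\}$, which is already a compact subset of $\wt{\DSign}$; so tightness is free and there is no need to extract it from moments. What interlacing does \emph{not} do is force the visible (large) coordinates of a random $b$ to equal those of $a$: at each step, interlacement allows the top point of $Y$ to lie anywhere between the top two points of $X$, so an infinite interlacing chain by itself does not pin down the limit. Your phrase ``interlacing constraints should force the large coordinates of $X_N(b)$ to match those of $X_\infty(a)$'' is exactly where the argument stops being a proof.

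The way the paper closes this (and the natural way to complete your argument) is a Stone--Weierstrass step you are missing: on any compact subset of $\wt\Om$ the symmetric functions separate points and hence are uniformly dense in the continuous functions. Since all the measures $\La^\infty_N(a,\cdot)$ live on a common compact set, it is enough to check
\[
\bigl\langle \La^\infty_N(a,\cdot),\,P_\nu\bigr\rangle
=\bigl\langle \La^\infty_N(a,\cdot),\,P_{\nu\mid N}\bigr\rangle
=(t^N;q,t)_\nu\,P_\nu(X_\infty(a))
\longrightarrow P_\nu(X_\infty(a))
=\langle\delta_a,P_\nu\rangle
\]
for every $\nu$, and the first two equalities are exactly \eqref{eq1.I1}. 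That one line replaces your entire ``tightness plus matching'' program. With this fix, your proof is complete and structurally different from the paper's (pointwise-plus-dominated-convergence versus compact-support-plus-truncation), but both rest on the same core mechanism: density of symmetric functions on compacta together with the coherency relation.
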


\subsection{Hypergeometric point processes}
The main result of Gorin--Olshanski \cite{GO-2016} was the construction of the so called $q$-zw-measures --- a family of probability measures on the space $\DSign(\infty)$, depending on the parameter $q$ and providing a $q$-version of the spectral measures coming from harmonic analysis on $U(\infty)$ \cite{BO-2005}. The $q$-zw-measures are further studied  in \cite{CGO}. 

In the subsequent paper \cite{Ols-MacdonaldTwo} it is shown that the construction of the $q$-zw-measures can be extended by adding the second Macdonald parameter, $t$. This is achieved based on Theorems A, B, C and leads  to a new family of random point processes. 

\subsection{Organization of the paper}

Section \ref{sect2} establishes the link with \cite{GO-2016}. In section \ref{sect3} we state Okounkov's theorem about the $q$-integral representation of Macdonald polynomials in a form convenient for later use. In section \ref{sect4} we prove Theorem A for the case $t=q^\tau$ with $\tau\in\{1,2,3,\dots\}$. Section \ref{sect5} gives the proof of Theorem A for arbitrary $q,t\in(0,1)$. The essence of the argument is a delicate limit transition in Okounkov's formula. The section ends with remarks concerning a continuous analogue of Theorem A.  Theorems B and C are proved in section \ref{sect8} after a  preparation occupying sections \ref{sect6}-\ref{sect7}.

\section{Link between two families of stochastic matrices}\label{sect2}

The purpose of this short section is to justify the claim in comment 2 to Theorem A (subsection \ref{results1} above). Namely, in Proposition \ref{prop2.A} below we show how to transform the equations  \eqref{eq1.F} to the same form as in \eqref{eq1.I}. This result  establishes a link between our setup and that of Gorin \cite{G-AM} and Cuenca \cite{Cuenca}. 

Note that the summation in \eqref{eq1.F} is actually taken over those signatures $b$ that interlace with $a$ (see \eqref{eq1.I}): indeed, this follows from the branching rule for Macdonald polynomials. Recall that the interlacement relation is denoted as $b\prec a$. 

To simplify the notation we assume here that $\zeta_+=1$. 
Given $a\in\Sign(N)$, we set 
$$
X_N(a):=X_N(a,\varnothing)=\{q^{-a_i}t^{i-1}: i=1,\dots,N\}. 
$$ 

Let $a\mapsto a^*$ denote the involutive map $\Sign(N)\to\Sign(N)$ defined by
$$
(a_1,\dots,a_N)\mapsto(-a_N,\dots,-a_1).
$$ 

\begin{proposition}\label{prop2.A}
Let the $L^N_{N-1}(a,b;q,t)$ be the coefficients from the expansion \eqref{eq1.F}. For any $N=2,3,\dots$, signature $a\in\Sign(N)$, and partition $\nu\in\Y(N-1)$ we have
\begin{equation}\label{eq2.B}
\sum_{b\prec a} L^N_{N-1}(a^*,b^*;q,t) \frac{P_{\nu\mid N-1}(X_{N-1}(b);q,t)}{(t^{N-1};q,t)_\nu}=\frac{P_{\nu\mid N}(X_N(a);q,t)}{(t^N;q,t)_\nu}.
\end{equation}
\end{proposition}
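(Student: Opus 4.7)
The strategy is to deduce \eqref{eq2.B} from \eqref{eq1.F} by making a single substitution into the variables $u_i$ that converts branching on the \emph{index} of $P$ into branching on the \emph{argument}, via Macdonald's evaluation symmetry.

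First, I apply \eqref{eq1.F} with $a$ replaced by $a^*$. Fixing $\nu\in\Y(N-1)\subset\Y(N)$ (so $\nu_N=0$), substitute $u_i = q^{\nu_i}t^{1-i}$ for $i=1,\dots,N-1$; because $\nu_N=0$, the condition $u_N=t^{1-N}$ in \eqref{eq1.F} automatically obeys the same formula at $i=N$. Using the homogeneity $P_a(cx_1,\dots,cx_M) = c^{|a|}P_a(x_1,\dots,x_M)$ (which extends from partitions to signatures through the factorization $P_a = (x_1\cdots x_M)^{a_M}P_{a-a_M 1^M}$), rescale the arguments of each Macdonald polynomial by $t^{N-1}$ on the left-hand side and by $t^{N-2}$ inside each summand on the right-hand side. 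The rescaling weights cancel in each normalized ratio, and the arguments acquire the form $(q^{\mu_1}t^{M-1},\dots,q^{\mu_M})$.

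Next, I apply Macdonald's evaluation symmetry
$$\frac{P_\lambda(q^{\mu_1}t^{M-1},\dots,q^{\mu_M};q,t)}{P_\lambda(t^{M-1},\dots,1;q,t)} = \frac{P_\mu(q^{\lambda_1}t^{M-1},\dots,q^{\lambda_M};q,t)}{P_\mu(t^{M-1},\dots,1;q,t)},$$
valid for partitions $\lambda,\mu\in\Y(M)$ and extended to signatures by the same Laurent factorization (the extra monomial factors match on the two sides and cancel). With $M=N$, $\lambda=a^*$, $\mu=\nu$, the arguments of $P_\nu$ become $\{q^{-a_{N-i+1}}t^{N-i}\}_{i=1}^N$, which after relabeling $j=N-i+1$ is exactly $X_N(a)$. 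After reindexing the sum by $c=b^*$ (valid because $b\prec a \Leftrightarrow b^*\prec a^*$), each summand on the right-hand side becomes $L^N_{N-1}(a^*,b^*;q,t) \cdot P_\nu(X_{N-1}(b);q,t)/P_\nu(t^{N-2},\dots,1;q,t)$.

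Finally, the principal specialization formula
$$P_\nu(1,t,\dots,t^{M-1};q,t) = t^{n(\nu)}\,\frac{(t^M;q,t)_\nu}{\prod_{s\in\nu}(1-q^{a(s)}t^{l(s)+1})}$$
shows that the ratio $P_\nu(t^{M-1},\dots,1;q,t)/(t^M;q,t)_\nu$ is independent of $M$. Dividing through by this common constant converts the normalizations from $P_\nu(t^{M-1},\dots,1;q,t)$ to $(t^M;q,t)_\nu$ at both levels $M=N$ and $M=N-1$ simultaneously, producing \eqref{eq2.B}. The only delicate point is extending Macdonald's duality from partitions to signatures, which is immediate from the Laurent factorization; the rest is careful bookkeeping of specializations and rescalings.
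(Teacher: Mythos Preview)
Your proof is correct and follows essentially the same route as the paper's: specialize the variables $u_i$ at $q^{\nu_i}t^{1-i}$, use homogeneity to put the arguments in the form $(q^{\mu_1}t^{M-1},\dots,q^{\mu_M})$, apply the label--argument (evaluation) symmetry, and then convert the normalizing constants via the principal specialization formula. The only cosmetic difference is that you perform the substitution $a\to a^*$ at the outset and reindex the sum afterwards, whereas the paper carries out the computation for generic $(a,b)$ and replaces $(a,b)\to(a^*,b^*)$ at the end.
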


\begin{proof}
Using the fact that Macdonald polynomials are homogeneous we rewrite \eqref{eq1.F} as 
\begin{multline}\label{eq2.C}
\frac{P_{a\mid N}(u_1t^{N-1},\dots,u_{N-1}t^{N-1},1;q,t)}{P_{a\mid N}(t^{N-1}, t^{N-2},\dots, 1;q,t)}\\
=\sum_{b\prec a} L^N_{N-1}(a,b;q,t)\frac{P_{b\mid N-1}(u_1t^{N-2},\dots,u_{N-1}t^{N-2};q,t)}{P_{b\mid N-1}(t^{N-2},t^{N-3},\dots,1;q,t)}.
\end{multline} 
Take $\nu\in\Y(N-1)$ and substitute in \eqref{eq2.C}
$$
(u_1,\dots,u_{N-1})=(q^{\nu_1},\, q^{\nu_2}t^{-1}, \, \dots\, q^{\nu_{N-1}}t^{2-N}).
$$
Then we obtain
\begin{multline}\label{eq2.D}
\frac{P_{a\mid N}(q^{\nu_1}t^{N-1},\dots,q^{\nu_{N-1}}t, 1;q,t)}{P_{a\mid N}(t^{N-1}, t^{N-2},\dots, 1;q,t)}\\
=\sum_{b\prec a} L^N_{N-1}(a,b;qt)\frac{P_{b\mid N-1}(q^{\nu_1}t^{N-2},\dots,q^{\nu_{N-1}};q,t)}{P_{b\mid N-1}(t^{N-2},t^{N-3},\dots,1;q,t)}.
\end{multline} 

Next, recall the \emph{label-argument symmetry relation} for Macdonald polynomials (\cite[ch. VI, (6.6)]{M}):
\begin{equation}\label{eq2.sym}
\frac{P_{\mu\mid N}(q^{\la_1}t^{N-1},\dots,q^{\la_{N-1}}t, q^{\la_N};q,t)}{P_{\mu\mid N}(t^{N-1}, t^{N-2},\dots, 1;q,t)}=\frac{P_{\la\mid N}(q^{\mu_1}t^{N-1},\dots,q^{\mu_{N-1}}t, q^{\mu_N};q,t)}{P_{\la\mid N}(t^{N-1}, t^{N-2},\dots, 1;q,t)}.
\end{equation}
In \cite{M}, this relation is established for ordinary (non-Laurent) Macdonald polynomials, so that $\la$, $\mu$ are assumed to be partitions of length at most $N$. But the result is immediately extended to the Laurent version of Macdonald polynomials labelled by signatures. Applying the symmetry relation to both sides of \eqref{eq2.D} we obtain  
\begin{multline}\label{eq2.E}
\frac{P_{\nu\mid N}(q^{a_1}t^{N-1},\dots,q^{a_{N-1}}t, q^{a_N};q,t)}{P_{\nu\mid N}(t^{N-1}, t^{N-2},\dots, 1;q,t)}\\
=\sum_{b\prec a} L^N_{N-1}(a,b;q,t)\frac{P_{\nu\mid N-1}(q^{b_1}t^{N-2},\dots,q^{b_{N-1}};q,t)}{P_{\nu\mid N-1}(t^{N-2},t^{N-3},\dots,1;q,t)}.
\end{multline} 
Then we replace $(a,b)$ with $(a^*,b^*)$, which gives
\begin{equation}\label{eq2.F}
\frac{P_{\nu\mid N}(X_N(a);q,t)}{P_{\nu\mid N}(t^{N-1}, t^{N-2},\dots, 1;q,t)}
=\sum_{b\prec a} L^N_{N-1}(a^*,b^*;q,t)\frac{P_{\nu\mid N-1}(X_{N-1}(b);q,t)}{P_{\nu\mid N-1}(t^{N-2},t^{N-3},\dots,1;q,t)}.
\end{equation}

Finally, from the principal specialization formula \cite[Ch VI, (6.11$'$)]{M} it follows that $P_{\nu\mid N}(1,t,\dots,t^{N-1};q,t)$ differs from $(t^N;q,t)_\nu$ by a factor that depends only on $\nu$. Therefore, \eqref{eq2.F} implies \eqref{eq2.B}.
\end{proof}

 A similar computation is contained in Appendix B of Forrester--Rains \cite{FR}. See also the end of \S2 of \cite{GO-2016}.

\section{Okounkov's $q$-integral formula}\label{sect3}

In this section we formulate a result due to Okounkov \cite{Ok-CM}, which is substantially used in the sequel.

\subsection{Notation from $q$-calculus}

Throughout the paper we use the standard notation from $q$-calculus (see Gasper--Rahman \cite{GR}):
For $z\in\C$ and $m=0,1,2,\dots$
$$
(z;q)_\infty:=\prod_{n=0}^\infty(1-zq^n), \quad (z;q)_m:=\prod_{n=0}^{m-1}(1-zq^n)=\frac{(z;q)_\infty}{(zq^m;q)_\infty}.
$$
The $q$-integral of a function $f$ in the complex domain is defined by (below $z,z'\in\C$)
\begin{equation}\label{eq3.E}
\begin{gathered}
\int_{z'}^z f(w)d_q w:=\int_0^z f(w)d_q w-\int_0^{z'}f(w)d_q w, \\
\text{where} \quad \int_0^z f(w)d_q w:=(1-q)\sum_{n=0}^\infty f(zq^n) zq^n.
\end{gathered}
\end{equation}
These definitions make sense for any complex $q$ with $|q|<1$.

\subsection{Okounkov's formula}\label{sect3.2}
Introduce the following meromophic function in $2N-1$ variables:  
\begin{multline}\label{eq3.A}
 R(z_1,\dots,z_N;w_1,\dots,w_{N-1};q,t)\\
 :=\prod_{r=1}^{N-1}\prod_{s=1}^N\frac{(w_rq/z_s;q)_\infty}{(w_rt/z_s;q)_\infty} \prod_{1\le i\ne j\le N}\frac{(z_it/z_j;q)_\infty}{(z_iq/z_j;q)_\infty}.
\end{multline}
Below we write $Z=(z_1,\dots,z_N)$, $W=(w_1,\dots,w_{N-1})$  and set
$$
V(Z):= \prod\limits_{1\le i<j\le N}(z_i-z_j), \quad V(W):=\prod\limits_{1\le r<s\le N-1}(w_r-w_s). 
$$
A constant $C_N(q,t)$ is defined by
\begin{equation}\label{eq3.D}
C_N(q,t):=\frac{((t;q)_\infty)^N}{(1-q)^{N-1}(t^N;q)_\infty((q;q)_\infty)^{N-1}}.
\end{equation}
Recall that the symbol $(z;q,t)_\nu$ was defined in \eqref{eq1.J}.

Finally, recall that $P_{\nu\mid N}(\ccdot;q,t)$ is our notation for the $N$-variate Macdonald polynomial indexed by a partition $\nu\in\Y(N)$, with parameters $(q,t)$. Our normalization of these polynomials is the same as in \cite{M}; that is, the monomial $x_1^{\nu_1}\dots x_N^{\nu_N}$ enters $P_{\nu\mid N}(x_1,\dots,x_N;q,t)$ with coefficient $1$.

\begin{theorem}[Okounkov \cite{Ok-CM}, Theorem I]\label{thm3.A}
Let $q\in\C$,  $|q|<1$. Next, let $N=2,3,\dots$ and $\nu\in\Y(N-1)$. Finally, let  $Z=(z_1,\dots,z_N)\in\C^N$ and $t\in\C$ be  in general position.

Then the following formula holds
\begin{multline}\label{eq3.B}
\frac{C_N(q,t)}{V(Z)}\,\int^{z_1}_{z_2}d_q w_1\int^{z_2}_{z_3}d_q w_2\dots \int^{z_{N-1}}_{z_N}d_q w_{N-1}\\
\times V(W)\,  R(z_1,\dots,z_N;w_1,\dots,w_{N-1};q,t)\frac{P_{\nu\mid N-1}(w_1,\dots,w_{N-1};q,t)}{(t^{N-1};q,t)_\nu}\\
=\frac{P_{\nu\mid N}(z_1,\dots,z_N;q,t)}{(t^N;q,t)_\nu}.
\end{multline}
\end{theorem}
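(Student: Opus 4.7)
The plan is to identify the left-hand side, call it $F_\nu(Z)$, with the right-hand side by applying the characterization of Macdonald polynomials as joint eigenfunctions of Macdonald's commuting family of $q$-difference operators. The overall architecture is: (i) make sense of the integral, (ii) show $F_\nu$ is a symmetric Laurent polynomial in $Z$, (iii) show it is the correct eigenfunction of $D_N^{(1)}$, (iv) fix the constant by specialization.

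First I would justify that the iterated $q$-integral converges. By definition \eqref{eq3.E} it unfolds into an $(N-1)$-fold series indexed by nonnegative integers $(n_1,\dots,n_{N-1})$, and absolute convergence for $|q|<1$ with generic $t$ and $Z$ follows from the rational structure of the kernel $R$: each ratio $(w_rq/z_s;q)_\infty/(w_rt/z_s;q)_\infty$ stays bounded as $w_r$ runs over $\{z_iq^n:n\ge 0\}$, while $V(W)$ and $P_{\nu\mid N-1}(W;q,t)$ grow only polynomially in the summation indices.

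Second, I would show that $F_\nu(Z)$ is a symmetric Laurent polynomial in $z_1,\dots,z_N$. The subtle point is symmetry: the individual contour from $z_i$ to $z_{i+1}$ is not manifestly symmetric under $z_i\leftrightarrow z_{i+1}$, but the factor $V(Z)^{-1}$ supplies the correct antisymmetrization. Concretely, one verifies that the numerator, written as a signed sum over permutations of the $z_i$, matches a totally antisymmetrized iterated $q$-integral; this rests on $V(W)R(Z;W;q,t)$ having the correct zero structure at the boundaries $w_r=z_j$ so that adjacent boundary contributions cancel pairwise. Degree bounds follow from the polynomial character of $P_{\nu\mid N-1}$ together with the explicit exponents built into $R$.

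Third, I would show that $F_\nu(Z)$ is an eigenfunction of Macdonald's $q$-difference operator $D_N^{(1)}$ in $Z$ with the same eigenvalue as $P_{\nu\mid N}(Z;q,t)$, namely $\sum_{i=1}^N q^{\nu_i}t^{N-i}$ (with $\nu_N=0$). The mechanism is an intertwining relation for the kernel: $R(Z;W;q,t)$ is designed so that its action conjugates $D_N^{(1)}$ in $Z$ into $D_{N-1}^{(1)}$ (plus a scalar) in $W$. One applies $D_N^{(1)}$ inside the $q$-integral, uses $q$-summation by parts to transfer the action onto $P_{\nu\mid N-1}(W;q,t)$, and absorbs the telescoping boundary contributions into the scalar piece. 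Combined with the symmetry established in the previous step and the correct top-degree behavior, uniqueness of eigenfunctions of $D_N^{(1)}$ with given leading monomial $z^\nu$ forces $F_\nu(Z)=\const\cdot P_{\nu\mid N}(Z;q,t)/(t^N;q,t)_\nu$. Finally, I would fix the constant by the case $\nu=\varnothing$, which reduces the identity to a multidimensional $q$-Selberg integral whose value is the Askey--Habsieger--Kadell--Macdonald evaluation; matching the outcome produces the explicit formula \eqref{eq3.D} for $C_N(q,t)$.

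The principal obstacle is the intertwining step in the third paragraph: verifying that $R$ conjugates $D_N^{(1)}$ into $D_{N-1}^{(1)}$ up to a scalar requires a delicate $q$-difference computation involving the full product of $(N-1)N$ ratios appearing in $R$, and making the $q$-summation by parts rigorous demands careful control of the boundary terms at the endpoints $w_r=z_r$ and $w_r=z_{r+1}$, where individual factors of $R$ can develop singularities in generic position. A secondary technical point is controlling the interchange of summation and operator action to legitimize the whole calculation under the general-position assumption on $(q,t,Z)$.
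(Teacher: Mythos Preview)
Your plan follows a classical route---symmetry plus Macdonald eigenfunction characterization---but both Okounkov's original argument and the paper's own alternate derivation take a different, and in the end much shorter, path after the symmetry step. The paper does not reprove the theorem in full; it cites \cite{Ok-CM} and then observes (section~3.5) that once one knows the left-hand side is a symmetric polynomial in $z_1,\dots,z_N$ of the right degree (Okounkov's Proposition~3.4, which corresponds to your step~(ii)), the identity can be checked on a Zariski-dense set: specialize $Z=X_N(a)$ for $a\in\Sign(N)$, and then the computation of section~\ref{sect2} (label-argument symmetry plus principal specialization) collapses the identity to the branching rule for Macdonald polynomials. No operator intertwining, no $q$-Selberg evaluation.

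Your step~(iii), by contrast, asks for an intertwining relation of the form
\[
D_N^{(1)}\circ \mathcal{I}=\mathcal{I}\circ\bigl(tD_{N-1}^{(1)}+1\bigr)
\]
for the integral operator $\mathcal{I}$ with kernel $V(W)R(Z;W;q,t)/V(Z)$. This is a plausible mechanism, and kernels of Okounkov's type are known to have such properties, but you correctly flag that verifying it is the crux and leave it unproved. The boundary-term bookkeeping you mention is genuinely delicate: the endpoints $w_r=z_r,z_{r+1}$ hit the poles of $R$ in the $z$-variables, and the cancellation that makes $F_\nu$ symmetric is not the same cancellation that would make the $q$-summation-by-parts telescope cleanly. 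So your plan is not wrong, but its hard step is strictly harder than the paper's alternative, which trades the analytic difficulty for the known combinatorial identity (the branching rule). What your approach would buy, if carried through, is an operator-theoretic explanation of why the kernel $R$ is the right one; the paper's route is faster but less conceptual on that point.
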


Note that the assumption about general position (not explicitly mentioned in \cite{Ok-CM}) is imposed  in order to avoid possible singularities of the integrand on the $q$-contour of integration.

\subsection{A special case}

Suppose that $t=q^\tau$, where $\tau\in\{1,2,\dots\}$. Then the expression \eqref{eq3.A} can be simplified:

\begin{equation}\label{eq3.C}
 R(z_1,\dots,z_N;w_1,\dots,w_{N-1};q,q^\tau)=\dfrac{\prod\limits_{r=1}^{N-1}\prod\limits_{s=1}^N(w_rq/z_s;q)_{\tau-1}}{\prod\limits_{1\le i\ne j\le N}(z_iq/z_j;q)_{\tau-1}}.
\end{equation}
In the case $t=q$ we simply have
$$
 R(z_1,\dots,z_N;w_1,\dots,w_{N-1};q,q)\equiv1.
$$

\subsection{Another special case} 

Another kind of simplification in Theorem \ref{thm3.A} occurs when  $q=0$ (the case of Hall--Littlewood polynomials). Then \eqref{eq3.A} turns into
$$
 R(z_1,\dots,z_N;w_1,\dots,w_{N-1};0,t)\\
 :=\prod_{j=1}^N\dfrac{\prod_{i:\, i\ne j}(z_j-z_it)}{\prod_{i=1}^{N-1}(z_j-w_it)}
$$
and the $q$-integral \eqref{eq3.E} reduces to 
$$
\left.\int_{z'}^z f(w)d_qw \right|_{q=0}=z f(z)-z'f(z'),
$$
so that the multiple $q$-integral \eqref{eq3.B} reduces to a finite sum (because of the factor $V(W)$, the sum actually comprises not $2^{N-1}$ but only $N$ summands).

\subsection{Remark on alternate derivation of Okounkov's formula}

Okounkov first shows (\cite[Proposition 3.4]{Ok-CM}) that the left-hand side of \eqref{eq3.B}  is a symmetric polynomial in variables $(z_1,\dots,z_N)$, of the same degree (in this claim the polynomial $P_{\nu\mid N-1}(w_1,\dots,w_{N-1};q,t)$ can be replaced by an arbitrary symmetric polynomial). This part of his proof is relatively easy and short, while the remaining part is longer and more intricate. But the latter part can be replaced by the following argument. The aforementioned proposition allows to reduce the desired idenity \eqref{eq3.B} to the special case when, in the notation of section \ref{sect2}, 
$$
(z_1,\dots,z_N)=X_N(a), \qquad a\in\Sign(N).
$$
And then the computation of section \ref{sect2} allows to further reduce the identity to the branching rule for the Macdonald polynomials. 

After I realized this, I found a similar remark in Forrester--Rains \cite[comment after (3.17)]{FR}.

\section{Proof of Theorem A: special case $t=q^\tau$, $\tau=1,2,3,\dots$} \label{sect4}

\subsection{Preliminaries} 
 
Throughout this section we assume that $t=q^\tau$, where $0<q<1$ and $\tau$ is a fixed positive integer. 

Recall the definition \eqref{eq1.G}:  the \emph{two-sided $q$-lattice} $\L\subset\R\setminus\{0\}$ is the subset 
\begin{equation}\label{eq4.A}
\L=\L_+\sqcup\L_-, \qquad \L_\pm:=\{\zeta_\pm q^n: n\in\Z\}\subset\R^*,
\end{equation}
where $\zeta_-<0$ and $\zeta_+>0$ are two fixed extra parameters. We need these two parameters only to define the lattice $\L$. Note that $\L$ does not change if $\zeta_+$ or $\zeta_-$ is multiplied by an integral power of parameter $q$. Points of $\L$ are called \emph{nodes}. 

In this  section, by a  \emph{configuration} we always mean a subset $X\subset\L$.  
We say that a configuration $X$ is \emph{$\tau$-sparse} if any two distinct points of $X$ are separated by at least  $\tau-1$ empty nodes (of course, this is a real constraint only for $\tau\ge2$). 

Equivalently, $X$ is $\tau$-sparse if for any two distinct points $x, x'$ of $X$ the following condition holds: $0<x'<x$ implies $x'\le x t$, and $x<x'<0$ implies $x'\ge xt$. One more equivalent formulation: for any two distinct points $x, \,x'\in X$ of the same sign, one has $|\log_q(x/x')|\ge \tau$.

\begin{definition}\label{def4.B}
Given two nodes $x'<x$ of $\L$, which are separated by at least $\tau-1$ other nodes, we introduce a special notion of \emph{$q$-interval} $\Int_\tau(x',x)$. This is a set of nodes whose definition depends on the position of the pair $(x',x)$ with respect to $0$:

\smallskip

1. If $0<x'<x$, then $\Int_\tau(x',x):=\{y\in\L_+: x' q^{-\tau}\le y\le x\}$.

2. If $x'<x<0$, then $\Int_\tau(x',x):=\{y\in\L_-: x'\le y\le xq^{-\tau}\}$.

3. If $x'<0<x$, then $\Int_\tau(x',x):=\{y\in\L: x'\le y\le x\}$.

\end{definition}

Note that in the third case $\Int_\tau(x',x)$ contains infinitely many nodes. 

\begin{definition}\label{def4.C}
(i) For each $N=1,2,\dots$ we denote by $\Om_N$ the set of $\tau$-sparse $N$-point configurations on $\L$. 

(ii) We say that two configurations $X\in\Om_N$ and $Y\in\Om_{N-1}$ \emph{interlace} if the following condition holds. Write $X=(x_1>\dots>x_N)$ and $Y=(y_1>\dots>y_{N-1})$; then we require that $y_i\in\Int_\tau(x_{i+1},x_i)$ for every $i=1,\dots,N-1$.   
\end{definition}

This definition of $\Om_N$  in item (i) agrees with Definition \ref{def1.C}: the $N$-point $\tau$-sparse configurations are precisely the configurations $X_N(a^+,a^-)$ coming from double signatures $(a^+,a^-)\in\DSign(N)$. The definition of interlacement in item (ii) agrees with Definition \ref{def1.interlace}. Below we write the interlacement relation as $X\succ Y$ or $Y\prec X$. 

\subsection{The matrices $\LaN$}\label{sect4.B}

Let us agree to enumerate the points of a given configuration $X\in\Om_N$ in the descending order: $X=(x_1>\dots>x_N)$. Keeping this in mind, we set 
$$
V(X)=\prod_{1\le i<j\le N}(x_i-x_j).
$$
Thus, $V(X)>0$. In the case $N=1$ we agree that $V(X)=1$.

\begin{definition}\label{def4.A}
For each $N=2,3,\dots$ we define the matrix $\LaN$ of format $\Om_N\times\Om_{N-1}$ with the following entries $\LaN(X,Y)$:

$\bullet$ $\LaN(X,Y)=0$ unless $X\succ Y$.

$\bullet$ If $X\succ Y$, then 
\begin{equation}\label{eq4.B}
\La^N_{N-1}(X,Y)=\frac{((t;q)_\infty)^N}{(t^N;q)_\infty((q;q)_\infty)^{N-1}}\cdot\frac{V(Y)}{V(X)}\cdot \prod_{y\in Y}|y|\cdot
\frac{\prod\limits_{y\in Y}\prod\limits_{x\in X}(yq/x;q)_{\tau-1}}{\prod\limits_{x,x'\in X: \, x\ne x'}(xq/x';q)_{\tau-1}}.
\end{equation}
\end{definition}

In the simplest case $\tau=1$ the expression on the right-hand side simplifies and reduces to 
\begin{equation}\label{eq4.H}
(q;q)_{N-1} \prod\limits_{y\in Y}|y|\cdot\dfrac{V(Y)}{V(X)},
\end{equation}
which agrees with the definition given in \cite{GO-2016} and \cite{Ols-2016}.

\begin{lemma}\label{lemma4.A}
All the entries $\La^N_{N-1}(X,Y)$ are nonnegative. 
\end{lemma}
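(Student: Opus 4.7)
The plan is to verify factor-by-factor that \eqref{eq4.B} is (strictly) positive, by isolating the obvious positive pieces and then doing a careful sign analysis of the ratio of $q$-Pochhammer products. The constant prefactor $\frac{((t;q)_\infty)^N}{(t^N;q)_\infty((q;q)_\infty)^{N-1}}$ is a ratio of convergent $q$-Pochhammers whose linear factors all lie in $(0,1)$ since $0<q,t<1$, hence is strictly positive. The Vandermonde ratio $V(Y)/V(X)$ is positive because we enumerate points of $X$ and $Y$ in strictly descending order, and $\prod_{y\in Y}|y|>0$ trivially. So the remaining task is to prove that
$$
R(X,Y):=\frac{\prod_{y\in Y,\,x\in X}(yq/x;q)_{\tau-1}}{\prod_{x\ne x'\in X}(xq/x';q)_{\tau-1}}
$$
is positive.

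The key observation is that each individual $q$-Pochhammer $(\eta q/\xi;q)_{\tau-1}=\prod_{m=1}^{\tau-1}(1-\eta q^m/\xi)$ appearing in $R(X,Y)$ is either strictly positive or carries the sign $(-1)^{\tau-1}$. When $\eta$ and $\xi$ sit on opposite arms of $\L$, each linear factor exceeds $1$, so the contribution is positive. When they sit on the same arm, write $\eta/\xi=q^k$; the $\tau$-sparseness of $X$, the $\tau$-sparseness of $Y$, and the explicit description of $\Int_\tau$ controlling the interlacement $Y\prec X$ jointly force the integer $k$ to satisfy $k\ge 0$ or $k\le -\tau$ — never to fall in the intermediate window $\{1-\tau,\dots,-1\}$. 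In the first case every linear factor is positive, and in the second every linear factor is $1-q^{\le-1}<0$, giving the overall sign $(-1)^{\tau-1}$.

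This reduces the lemma to a parity count. Setting $k:=|X\cap\L_+|$ and $l:=|X\cap\L_-|$, each unordered same-arm pair inside $X$ contributes exactly one $(-1)^{\tau-1}$-factor to the denominator (coming from the ordered pair with $|\eta|>|\xi|$), while cross-arm pairs contribute only positive factors; thus
$$
\sgn(\text{denominator})=(-1)^{(\tau-1)\bigl(\binom{k}{2}+\binom{l}{2}\bigr)}.
$$
For the numerator, the position of $y_r\in\Int_\tau(x_{r+1},x_r)$ pins down which same-arm $x_i$ yield a $(-1)^{\tau-1}$-factor: those with $i>r$ on $\L_+$ and those with $i\le r$ on $\L_-$. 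A direct count gives $\binom{k}{2}$ such factors from $\L_+$-interactions and $\binom{l}{2}$ from $\L_-$-interactions, so $\sgn(\text{numerator})=\sgn(\text{denominator})$ and $R(X,Y)>0$. The main (mild) obstacle is the bookkeeping at the ``sign boundary'': when both $k,l\ge 1$, the interval $\Int_\tau(x_{k+1},x_k)$ straddles both arms of $\L$ and the point $y_k$ can freely lie on either $\L_+$ or $\L_-$; one must check that in either sub-case the total number of $(-1)^{\tau-1}$-factors comes out to $\binom{k}{2}+\binom{l}{2}$. This is a short case verification, but it is the only spot where genuine care is required.
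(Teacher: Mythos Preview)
Your argument is correct and rests on the same core idea as the paper's proof: each factor $(\,\eta q/\xi;q)_{\tau-1}$ carries sign $+1$ or $(-1)^{\tau-1}$, and the negative-sign factors in numerator and denominator occur in equal number. The paper organizes this slightly differently---it first treats the pure cases $X\subset\L_+$ and $X\subset\L_-$ by splitting the ratio into an ``all-positive'' part and a part where numerator and denominator factors are paired one-to-one (each pair having sign $(-1)^{\tau-1}/(-1)^{\tau-1}=1$), then reduces the mixed case by discarding cross-sign factors and the $y_k$-factors---whereas you run a single global parity count $\binom{k}{2}+\binom{l}{2}$ on each side; both bookkeepings are equivalent.
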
 

\begin{proof}
We will show that $X\succ Y$ entails $\La^N_{N-1}(X,Y)>0$. In the case $\tau=1$ this follows immediately from \eqref{eq4.H}, so we will assume that $\tau\ge2$. 

Suppose first that $X\subset\L_+$, that is, all points of $X$ are on the right of $0$.  Then, according to Definition \ref{def4.C}, the following inequalities hold
\begin{gather}
x_1>\dots>x_N>0, \quad x_i\ge x_{i+1}q^{-\tau}, \qquad i=1,\dots,N-1; \label{eq4.C1}\\
y_1>\dots>y_{N-1}, \quad x_i\ge y_i\ge x_{i+1}q^{-\tau}\qquad i=1,\dots,N-1.\label{eq4.C2}
\end{gather}
We have to prove that the quantity  
$$
\frac{\prod\limits_{y\in Y}\prod\limits_{x\in X}(yq/x;q)_{\tau-1}}{\prod\limits_{x,x'\in X: \, x\ne x'}(xq/x';q)_{\tau-1}}
$$
is strictly positive. Let us split it in two parts:
$$
\frac{\prod\limits_{N-1\ge i\ge j\ge1}(y_iq/x_j;q)_{\tau-1}}{\prod\limits_{N\ge i>j\ge1}(x_iq/x_j;q)_{\tau-1}}\cdot \frac{\prod\limits_{1\le i< j\le N}(y_iq/x_j;q)_{\tau-1}}{\prod\limits_{1\le i<j\le N}(x_iq/x_j;q)_{\tau-1}}.
$$
In the first part, all the factors in the numerator and denominator are strictly positive because of  \eqref{eq4.C1} and \eqref{eq4.C2}: indeed, $i\ge j$ implies $0<y_iq/x_j<1$, while $i>j$ implies $0<x_iq/x_j<1$ (here we use the assumption $\tau\ge2$). 

Next, the second part of our expression can be rewritten in the form 
$$
\prod_{i=1}^{N-1}\prod_{j=i+1}^N\frac{(y_iq/x_j;q)_{\tau-1}}{(x_iq/x_j;q)_{\tau-1}}=\prod_{i=1}^{N-1}\prod_{j=i+1}^N\prod_{r=1}^{\tau-1}\frac{1-y_iq^r/x_j}{1-x_iq^r/x_j}.
$$
Now the inequalities \eqref{eq4.C1} and  \eqref{eq4.C2} show that all the factors in the denominator and the numerator are strictly negative. Since they contain equally many factors, we conclude that the whole expression is strictly positive. 

The same reasoning is applicable in the case $X\subset\L_-$ (observe that the definition of $\tau$-sparse configurations is symmetric with respect to reflection about zero, and so is the interlacement relation).  

Now we turn to the case when $0$ sits somewhere inside $X$, that is, $x_{k+1}<0<x_k$ for some $k<N$. Let us split $X$ into two parts,  $X_-\sqcup X_+$, where $X_-:=X\cap\L_-$  and $X_+:=X\cap\L_+$. Next, observe that $(yq/x;q)_{\tau-1}>0$ whenever $y$ and $x$ have opposite signs. Likewise,  
$(xq/x';q)_{\tau-1}>0$ whenever $x$ and $x'$ have opposite signs. Let us discard the corresponding factors.

Note that $y_k\in[x_{k+1},x_k]$ and all factors $(y_kq/x;q)_{\tau-1}$, where $x$ has the same sign as $y_k$, are strictly positive, because $y_kq/x<1$. Thus, these factors may again be discarded. After that the problem is reduced to the case when $X=X_\pm$, examined above. 
\end{proof}

The next remark will be used in the proof of Theorem \ref{thm4.A}

\begin{remark}\label{rem4.A}
Let $X=(x_1>\dots>x_N)\in\Om_N$ and let $Y=(y_1>\dots>y_{N-1})$ be an $(N-1)$-point configuration such that $y_i\in\Int_1(x_{i+1},x_i)$ for $i=1,\dots,N-1$. Then the right-hand side of \eqref{eq4.B} vanishes unless the stronger condition $y_i\in\Int_\tau(x_{i+1},x_i)$ holds for all $i$, meaning that $Y$ must interlace with $X$ in the sense of Definition \ref{def4.C}. 

Indeed, we have to show that if $y_i\in\Int_1(x_{i+1}, x_i)\setminus\Int_\tau(x_{i+1}, x_i)$ for some $i$, then the right-hand side of \eqref{eq4.B} vanishes. Let us examine the possible cases.

(1)  $x_{i+1}<0<x_i$. Then $\Int_1(x_{i+1},x_i)=\Int_\tau(x_{i+1},x_i)$, so the claim is trivial.  

(2)  $0<x_{i+1}=x_i q^\ell <x_i$ with $\ell\in\{\tau, \tau+1,\tau+2,\dots\}$. Then  
\begin{gather*}
\Int_1(x_{i+1}, x_i)=\{y\in\L_+: x_{i+1}q^{-1}\le y\le x_i\}, \\ 
\Int_\tau(x_{i+1},x_i)=\{y\in\L_+: x_{i+1}q^{-\tau}\le y\le x_i\}. 
\end{gather*}
If $y_i\in\Int_1(x_{i+1}, x_i)\setminus\Int_\tau(x_{i+1}, x_i)$, then $(y_iq/x_{i+1};q)_{\tau-1}=0$, so that the right-hand side of \eqref{eq4.B} vanishes. 

 (3) $x_{i+1}<x_i=x_{i+1}q^\ell<0$ with $\ell\in\{\tau, \tau+1,\tau+2,\dots\}$. Then
\begin{gather*}
\Int_1(x_{i+1}, x_i)=\{y\in\L_-: x_{i+1}\le y\le x_i q^{-1}\},\\ 
\Int_\tau(x_{i+1},x_i)=\{y\in\L_-: x_{i+1}\le y\le x_iq^{-\tau}\}.
\end{gather*}
If $y_i\in\Int_1(x_{i+1}, x_i)\setminus\Int_\tau(x_{i+1}, x_i)$, then $(y_iq/x_{i};q)_{\tau-1}=0$, so that the right-hand side of \eqref{eq4.B} vanishes. 
\end{remark}

\subsection{The coherency relation}

Recall that $\Y$ denotes the set of partitions, which we identify with the corresponding Young diagrams. Next, the length of a partition $\nu\in\Y$ is denoted by $\ell(\nu)$, and  $\Y(N):=\{\nu\in\Y: \ell(\nu)\le N\}$. 

Let $\Sym(N)$ denote the algebra of symmetric polynomials with $N$ variables (as the base field one can take $\R$ or $\C$).  Every polynomial $f\in\Sym(N)$ gives rise to a function on $\Om_N$: if $X=(x_1,\dots,x_N)\in\Om_N$, then we write $f(X):=f(x_1,\dots,x_N)$ (since $f$ is symmetric, the result does not depend on the enumeration of the points of $X$).  In particular, this is applicable to Macdonald polynomials, and we write their values at configurations $X\in\Om_N$ as $P_{\nu\mid N}(X;q,t)$.

\begin{theorem}\label{thm4.A}
Let $N=2,3,\dots$ and $\LaN$ be the matrix of format\/ $\Om_N\times\Om_{N-1}$ introduced in Definition \ref{def4.A}. Recall that $t=q^\tau$, where $0<q<1$ and $\tau$ is a positive integer. For any $\nu\in\Y(N-1)$ and any $X\in\Om_N$ the following `coherency relation' holds 
\begin{equation}\label{eq4.D}
\sum_{Y\in\Om_{N-1}}\LaN(X,Y) \frac{P_{\nu\mid N-1}(Y;q,t)}{(t^{N-1};q,t)_\nu}=\frac{P_{\nu\mid N}(X;q,t)}{(t^N;q,t)_\nu}.
\end{equation}
\end{theorem}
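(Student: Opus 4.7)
The plan is to derive \eqref{eq4.D} directly from Okounkov's $q$-integral formula \eqref{eq3.B}, specialized to $t = q^\tau$ and $Z = X \in \Om_N$. Informally, after unfolding the nested $q$-integrals via \eqref{eq3.E}, the left-hand side of \eqref{eq3.B} should turn into the sum over configurations $Y \prec X$ appearing in \eqref{eq4.D}, with coefficients matching \eqref{eq4.B}.

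First I would substitute $z_i = x_i$, the points of $X$ arranged in descending order. Since $X$ is $\tau$-sparse, the denominator $\prod_{i\ne j}(x_iq/x_j;q)_{\tau-1}$ in the simplified form \eqref{eq3.C} of $R$ is a nonzero constant (by the argument used in the proof of Lemma~\ref{lemma4.A}), and the integrand of \eqref{eq3.B} is polynomial in each $w_r$, so all the $q$-integrals converge and Okounkov's formula applies. Next, I would unfold each one-dimensional $q$-integral $\int_{x_{i+1}}^{x_i}(\ccdot)\,d_qw_i$ via \eqref{eq3.E}. A short case analysis by the signs of $x_{i+1},x_i$ shows that this integral always takes the form $(1-q)\sum_{y_i\in I_i}(\ccdot)\,|y_i|$, where $I_i\subset\L_+$ is the finite set $\{x_i,x_iq,\dots,x_{i+1}q^{-1}\}$ when $0<x_{i+1}<x_i$; $I_i\subset\L_-$ is the finite set $\{x_iq^{-1},\dots,x_{i+1}\}$ when $x_{i+1}<x_i<0$; and $I_i=\{y\in\L:x_{i+1}\le y\le x_i\}$ (an infinite but convergent sum, since the integrand is polynomial in $w_i$) when $x_{i+1}<0<x_i$.

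The central observation is that in the two same-sign cases, the factor $(w_iq/x_{i+1};q)_{\tau-1}$ (positive case) or $(w_iq/x_i;q)_{\tau-1}$ (negative case) sitting inside the numerator $\prod_{r,s}(w_rq/x_s;q)_{\tau-1}$ of \eqref{eq3.C} vanishes precisely at the $\tau-1$ nodes of $I_i$ closest to $x_{i+1}$, respectively $x_i$; after this pruning the effective summation set is exactly $\Int_\tau(x_{i+1},x_i)$ of Definition~\ref{def4.B}, which already coincides with $I_i$ in the opposite-sign case. Further vanishings that may come from factors $(w_iq/x_s;q)_{\tau-1}$ with $s\ne i,i+1$ are automatically accounted for by the factor $R(X;Y)$ inside $\LaN(X,Y)$, so both sides stay in agreement term by term. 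A direct check using $\tau$-sparseness also shows $\min\Int_\tau(x_{i+1},x_i)>\max\Int_\tau(x_{i+2},x_{i+1})$ for each $i$, so every admissible tuple $(y_1,\dots,y_{N-1})$ is strictly decreasing and corresponds bijectively to a configuration $Y\in\Om_{N-1}$ with $Y\prec X$ in the sense of Definition~\ref{def4.C}.

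Finally, I would collect constants: using
$$
C_N(q,t)(1-q)^{N-1}=\frac{((t;q)_\infty)^N}{(t^N;q)_\infty((q;q)_\infty)^{N-1}},
$$
the coefficient of $P_{\nu\mid N-1}(Y;q,t)/(t^{N-1};q,t)_\nu$ in the transformed left-hand side of \eqref{eq3.B} equals $\LaN(X,Y)$ as given by \eqref{eq4.B}, while the right-hand side of \eqref{eq3.B} is precisely $P_{\nu\mid N}(X;q,t)/(t^N;q,t)_\nu$. This yields \eqref{eq4.D}. The hardest part will be the bookkeeping in the central observation: carefully tracking signs, identifying exactly which nodes each factor $(w_iq/x_s;q)_{\tau-1}$ annihilates, and verifying that the surviving summation set coincides with $\Int_\tau(x_{i+1},x_i)$ rather than some larger or smaller set. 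Once this is settled, the coherency relation is essentially a restatement of Okounkov's formula in the lattice language of Definition~\ref{def4.A}.
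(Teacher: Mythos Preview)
Your proposal is correct and follows essentially the same route as the paper's proof: specialize Okounkov's formula \eqref{eq3.B} at $Z=X$, rewrite each one-variable $q$-integral as $(1-q)\sum_{y\in\Int_1(x_{i+1},x_i)}|y|(\,\cdot\,)$ (the paper records this as \eqref{eq4.G}), and then use the vanishing of the factors $(w_iq/x_s;q)_{\tau-1}$ to prune $\Int_1$ down to $\Int_\tau$ (the paper isolates this step as Remark~\ref{rem4.A}). Your additional check that the surviving tuples $(y_1,\dots,y_{N-1})$ are automatically $\tau$-sparse is a small point the paper leaves implicit in Definition~\ref{def4.C}, but it is correct and worth making explicit.
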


Recall that the symbol $(z;q,t)_\nu$ was defined in \eqref{eq1.J}. Note that $(z;q,t)_\nu\ne0$ if $z$ is real and less than $1$, hence $(t^N;q,t)_\nu\ne0$ and $(t^{N-1};q,t)_\nu\ne0$ for any $N=2,3,\dots$\,.

\begin{corollary}\label{cor4.A}
The $\LaN$, $N=2,3,\dots$, are stochastic matrices. 
\end{corollary}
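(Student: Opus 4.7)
The plan is to derive the corollary as an immediate consequence of Theorem \ref{thm4.A} together with Lemma \ref{lemma4.A}. A stochastic matrix must satisfy two conditions: (i) all entries are nonnegative, and (ii) every row sums to $1$. Condition (i) is exactly the content of Lemma \ref{lemma4.A}, so no further work is required there. For (ii) I will specialize the coherency relation \eqref{eq4.D} to the empty partition $\nu=\emptyset$, which belongs to $\Y(N-1)$.

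By the standard normalization of Macdonald polynomials one has $P_{\emptyset\mid N}(X;q,t)=1$ and $P_{\emptyset\mid N-1}(Y;q,t)=1$; moreover, from the definition \eqref{eq1.J} the quantity $(z;q,t)_\emptyset$ is an empty product and therefore equals $1$. Substituting these values into \eqref{eq4.D} collapses the coherency relation to
\begin{equation*}
\sum_{Y\in\Om_{N-1}}\La^N_{N-1}(X,Y)\;=\;1\qquad\text{for every }X\in\Om_N,
\end{equation*}
which is precisely the required row-sum identity. Combined with nonnegativity, this shows $\LaN$ is a stochastic matrix.

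The only point meriting a brief justification is convergence of the sum. If $X$ lies entirely in $\L_+$ or entirely in $\L_-$, then by Definition \ref{def4.C} the set $\{Y:Y\prec X\}$ is finite, so the sum has only finitely many nonzero terms. In the remaining case $X$ contains points on both sides of $0$; then the $q$-interval $\Int_\tau(x_{k+1},x_k)$ bridging zero contains infinitely many nodes and the sum over $Y\prec X$ is genuinely infinite. Since all summands are nonnegative by Lemma \ref{lemma4.A}, the statement of Theorem \ref{thm4.A} (applied to $\nu=\emptyset$) already entails that this series converges, with limit equal to $1$. Thus no additional convergence argument is needed here; the obstacle, if any, was absorbed into the proof of Theorem \ref{thm4.A} itself. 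The corollary therefore follows in a line.
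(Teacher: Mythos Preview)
Your proof is correct and follows essentially the same approach as the paper: invoke Lemma \ref{lemma4.A} for nonnegativity, then specialize the coherency relation \eqref{eq4.D} to $\nu=\varnothing$ so that both Macdonald polynomials and the generalized Pochhammer symbols become $1$, yielding the row-sum condition. Your added remark on convergence is a harmless elaboration; the paper leaves that point implicit since it is already handled in the proof of Theorem \ref{thm4.A}.
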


\begin{proof}[Proof of the corollary]
By Lemma \ref{lemma4.A}, the matrix entries are nonnegative. Next, take in \eqref{eq4.D} as $\nu$ the zero partition (= empty Young diagram). In this case the corresponding Macdonald polynomials are identically equal to $1$ and the generalized Pochhammer symbols in the denominators also equal  $1$. Then \eqref{eq4.D} means that the row sums of $\LaN$ equal $1$. We conclude that $\LaN$ is a stochastic matrix.
\end{proof}

\begin{proof}[Proof of the theorem]
We enumerate the points of $X$ and $Y$ in descending order: $X=(x_1>\dots>x_N)$, $Y=(y_1>\dots>y_{N-1})$. Then $X\succ Y$ means exactly that $y_i\in\Int_\tau(x_{i+1},x_i)$ for all $i=1,\dots,N-1$.

First of all, observe that the series on the left-hand side of \eqref{eq4.D} converges. Indeed, if $X$ is entirely contained in $\L_+$ or $\L_-$, then there are only finitely many $Y$'s interlacing with $X$, so the sum is finite. If $X$ has points both in $\L_+$ and $\L_-$, then there exists a unique index $k$ such that $x_{k+1}<0<x_k$. It follows that the $q$-interval $\Int_\tau(x_{k+1},x_k)$ comprises infinitely many nodes, while all other $q$-intervals $\Int_\tau(x_{i+1},x_i)$ contain finitely many nodes. Then the series on the left-hand side is infinite, but its convergence is assured by the factor $|y_k|$ entering the right-hand side of \eqref{eq4.B}. 

In the simplest case $\tau=1$, the Macdonald polynomials turn into the Schur functions, and then \eqref{eq4.D} admits a simple direct proof, see Gorin-Olshanski \cite[Proposition 2.7]{GO-2016} and  Kim-Stanton \cite{KS}. In the case $\tau=2,3,\dots$ we derive \eqref{eq4.D} from Okounkov's formula \eqref{eq3.B}.   

A simple but important observation is that if $x'<x$ is a pair of points of $\L$, then 
\begin{equation}\label{eq4.G}
\int_{x'}^x f(y)d_qy=(1-q)\sum_{y\in\Int_1(x',x)}|y|f(y).
\end{equation}
Indeed, if $x'<0<x$, this follows directly from the definition \eqref{eq3.E} of the $q$-integral. If $x$ and $x'$ are of the same sign, then the terms corresponding to points $y$ lying outside the set $\Int_1(x',x)\subset[x',x]$ cancel out. Let us emphasize that the assumption $x',x\in\L$ is crucial here; if it is dropped, then it may well happen that the $q$-integral depends on the values of $f$ at some points $y$ outside $[x',x]$.

Let us return to Okounkov's formula \eqref{eq3.B} and show that it reduces to \eqref{eq4.D} when $x_1,\dots,x_N$ are the points of a configuration $X\in\Om_N$ enumerated in the descending order. 

Indeed, due to \eqref{eq4.G},  the multiple $q$-integral in \eqref{eq3.B} becomes a sum over the set
$\Int_1(x_N,x_{N-1})\times \dots\times \Int_1(x_2,x_1)$. Let us compare this sum with the sum on the left-hand side of \eqref{eq4.D}. From comparison of \eqref{eq4.B} with \eqref{eq3.C} it is seen that both expressions have the same form, and the only apparent difference is that in \eqref{eq4.D}, the summation is taken over the smaller set $\Int_\tau(x_N,x_{N-1})\times\dots\times\Int_\tau(x_2,x_1)$.  
However, this does not matter because of Remark \ref{rem4.A}, which shows that all the extra summands actually vanish.

This completes the proof of the theorem.
\end{proof}

Theorem \ref{thm4.A} and Corollary \ref{cor4.A} provide a proof of Theorem A in the case of $t=q^\tau$, $\tau\in\{1,2,,\dots\}$, except the uniqueness claim. The proof of the latter claim (in the general case) in given below in section \ref{sect5.3}.

\section{Proof of Theorem A:  general case}\label{sect5}

Throughout this section $\zeta_+>0$ and $\zeta_-<0$ are fixed parameters; $q$ and $t$ are supposed to lie in the open interval $(0,1)$.

The goal of this section is to prove Theorem A (section \ref{results1}) and exhibit an explicit expression for the matrices $\LaN$ (see subsection \ref{sect5.10}).

In the course of the proof, we sometimes need to introduce the assumption that $t$ is in general position with respect to $q$, but each time this constraint is ultimately removed.

\subsection{The sets $\Om_N$}\label{sect5.1}

As in Section \ref{sect4}, it will be convenient for us to switch from double signatures $a\in\DSign(N)$ to the corresponding point configurations $X_N(a)\in\Om_N$.  We start with a direct description of the sets $\Om_N$ and then translate the notion of interlacement $b\prec a$ into the language of configurations. This looks a bit more complicated as in the special case examined in Section \ref{sect4}. 

Below we use the notation
$$
q^\Z:=\{q^m: m\in\Z\}, \quad \Z_{\ge0}:=\{0,1,2,\dots\}, \quad q^{\Z_{\ge0}}:=\{q^m: m\in\Z_{\ge0}\}.
$$
Given an $N$-tuple $X=(x_1>\dots>x_n)$ of nonzero real numbers, we set
$$
k=k(X):=\#\{i=1,\dots,N: x_i>0\}.
$$

\begin{lemma}\label{lemma5.Omega}
Let $N=1,2,\dots$\,. The set\/ $\Om_N$ introduced in Definition \ref{def1.C} consists of $N$-tuples $(x_1>\dots>x_N)$ of nonzero real numbers satisfying the following constraints {\rm(1) -- (4):}
\smallskip

{\rm(1)} If $k\ge1$, then $x_1\in \zeta_+ q^{\Z}$.

{\rm(2)} If $k\ge 2$, then $x_{i+1}\in x_iq^{\Z_{\ge0}}t$ for each $i=1,\dots,k-1$.

{\rm(3)} If $N-k\ge1$, then $x_N\in\zeta_- q^{\Z}$.

{\rm(4)} If $N-k\ge2$, then $x_{i-1}\in x_i q^{\Z_{\ge0}} t$ for each $i=N,\dots,k+2$.

\end{lemma}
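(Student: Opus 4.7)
The plan is to prove the two inclusions by a direct unpacking of Definition~\ref{def1.C}. Fix $(a^+,a^-)\in\DSign(N)$ with $a^+\in\Sign(k)$, $a^-\in\Sign(l)$, $k+l=N$, and set $X=X_N(a^+,a^-)$. First I would check that, with $q,t\in(0,1)$, the positive points $\zeta_+q^{-a^+_i}t^{i-1}$ ($1\le i\le k$) are strictly decreasing in $i$: the ratio of consecutive terms is $q^{a^+_i-a^+_{i+1}}t\le t<1$ (using $a^+_i\ge a^+_{i+1}$). Similarly, the negative points $\zeta_-q^{-a^-_j}t^{j-1}$ ($1\le j\le l$) have strictly increasing values (their absolute values strictly decrease by the same computation, and the sign flips the order). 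Hence listing $X$ in descending order $x_1>\dots>x_N$, the first $k$ entries are exactly the positive points and the last $l$ are exactly the negative points; in particular the quantity $k(X)$ in the statement coincides with the $k$ coming from the double signature.

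Next I would read off the constraints (1)--(4) directly. From the previous step, $x_i=\zeta_+q^{-a^+_i}t^{i-1}$ for $1\le i\le k$ and $x_{k+j}=\zeta_-q^{-a^-_{l-j+1}}t^{l-j}$ for $1\le j\le l$ (the reversal encodes the descending order on the negative side). Then (1) follows from $x_1=\zeta_+q^{-a^+_1}$; (2) from $x_{i+1}/x_i=q^{a^+_i-a^+_{i+1}}t$ with nonnegative exponent on $q$; (3) from $x_N=\zeta_-q^{-a^-_1}$; and (4) from $x_{i-1}/x_i=q^{a^-_{l-j+1}-a^-_{l-j+2}}t$ (where $i=k+j$), again with nonnegative exponent. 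This proves $\Om_N$ is contained in the set described by (1)--(4).

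For the reverse inclusion, given any tuple $(x_1>\dots>x_N)$ with $k$ positive and $N-k$ negative entries satisfying (1)--(4), I would reconstruct $(a^+,a^-)$ inductively. On the positive side: use (1) to define $a^+_1\in\Z$ by $x_1=\zeta_+q^{-a^+_1}$, and use (2) to define $a^+_{i+1}\in\Z$ from $x_{i+1}=x_iq^{a^+_i-a^+_{i+1}}t$, with the membership $x_{i+1}\in x_iq^{\Z_{\ge0}}t$ guaranteeing $a^+_i-a^+_{i+1}\in\Z_{\ge0}$, so $a^+\in\Sign(k)$. The negative side is symmetric using (3) and (4) with a reindexing that matches the reversal identified above. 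A brief verification then shows that the resulting $X_N(a^+,a^-)$ equals the given tuple, so the assignment is inverse to the map $(a^+,a^-)\mapsto X_N(a^+,a^-)$.

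There is no real obstacle; the only care needed is in keeping track of the index reversal on the negative side, so I would spell out the formula $x_{k+j}=\zeta_-q^{-a^-_{l-j+1}}t^{l-j}$ explicitly at the outset to avoid any sign or indexing confusion in conditions (3) and (4). As a byproduct the argument also establishes the bijection between $\DSign(N)$ and $\Om_N$ that was asserted without proof after Definition~\ref{def1.C}.
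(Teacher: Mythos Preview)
Your argument is correct and is exactly the direct verification the paper has in mind; the paper's own proof is simply ``Evident from Definition~\ref{def1.C}'', and you have spelled out the details of that evident computation, including the index reversal on the negative side.
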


\begin{proof}
Evident from Definition \ref{def1.C}.
\end{proof}

Let $X=(x_1>\dots>x_N)\in\Om_N$ and $Y=(y_1>\dots>y_{N-1})\in\Om_{N-1}$, where $N=2,3,\dots$\,. We say that $X$ and $Y$ \emph{interlace} (and then write $X\succ Y$ or $Y\prec X$) if the corresponding double signatures interlace in the sense of Definition \ref{def1.interlace}. 

\begin{lemma}\label{lemma5.interlace}
Let $X\in\Om_N$ and $Y\in\Om_{N-1}$. Set $k=k(X)$. The configurations $X$ and $Y$ interlace if and only if for each $r=1,\dots,N-1$, the coordinate $y_r$ satisfies the following condition varying depending on the relative position of $r$ and $k${\rm:}

{\rm(1)} Suppose $1\le r<k$ and observe that this implies that $0<x_{r+1}<x_r$ and $x_{r+1}=x_rq^{l_r} t$ with some $l_r\in\Z_{\ge0}$. Then $y_r=x_r q^{m_r}$ with $0\le m_r\le l_r$. 

{\rm(2)} Suppose $k<r\le N-1$ and observe that this implies that  $x_{r+1}<x_r<0$ and $x_r=x_{r+1}q^{l_r}t$ with some $l_r\in\Z_{\ge0}$. Then  $y_r=x_{r+1} q^{m_r} $ with $0\le m_r\le l_r$.

{\rm(3)} Suppose $0<k=r<N$ and observe that this implies $x_{k+1}<0<x_k$. Then $y_k=x_k q^{m_k}$ or $y_k=x_{k+1}q^{m_k}$, where $m_k\in\Z_{\ge0}$ may be arbitrary.
\end{lemma}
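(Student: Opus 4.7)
The plan is to reduce the lemma to a direct bookkeeping verification: unfold Definition \ref{def1.C} to read off what the descending enumeration of $X_N(a)$ looks like, then translate the combinatorial inequalities of Definition \ref{def1.interlace} into the claimed constraints on $m_r$. First I would record that if $a=(a^+,a^-)$ with $a^+\in\Sign(k)$, $a^-\in\Sign(l)$, then the positive half of $X_N(a)$ arranged in descending order is $x_r=\zeta_+ q^{-a^+_r}t^{r-1}$ for $r=1,\dots,k$ (using $a^+_r\ge a^+_{r+1}$ together with $q,t\in(0,1)$ to check the ordering), while the negative half arranged in descending order is $x_{k+j}=\zeta_- q^{-a^-_{l-j+1}} t^{l-j}$ for $j=1,\dots,l$. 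From this one immediately reads off the gaps $x_{r+1}/x_r=q^{a^+_r-a^+_{r+1}}t$ for $1\le r<k$ and $x_r/x_{r+1}=q^{a^-_{N-r}-a^-_{N-r+1}}t$ for $k<r\le N-1$, identifying the numbers $l_r$ in statements (1)--(2) of the lemma.

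Next I would invoke Definition \ref{def1.interlace}, which splits $b\prec a$ into two mutually exclusive cases: case (i), where $b^+$ loses one coordinate so $(b^+,b^-)\in\Sign(k-1)\times\Sign(l)$, and case (ii), where $b^-$ loses one coordinate so $(b^+,b^-)\in\Sign(k)\times\Sign(l-1)$. In both situations the positive and negative parts of $Y=X_{N-1}(b)$ are enumerated by the same formulas as for $X$, but with shifted lengths; the index $r=k$ is therefore distinguished, being the smallest positive coordinate in case (ii) and the largest negative coordinate in case (i). This dichotomy is exactly what subcase (3) records.

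The verification is then a case-by-case computation. For subcase (1), in either case (i) or case (ii) one has $y_r=\zeta_+ q^{-b^+_r}t^{r-1}$ for $1\le r<k$, so $y_r/x_r=q^{a^+_r-b^+_r}$, and the chain $a^+_r\ge b^+_r\ge a^+_{r+1}$ from Definition \ref{def1.interlace} is precisely $0\le m_r\le l_r$. Subcase (2) is symmetric on the negative side, with $y_r=\zeta_- q^{-b^-_{N-r}}t^{N-r-1}$ so $y_r/x_{r+1}=q^{a^-_{N-r}-b^-_{N-r}}$; the inequalities $a^-_{N-r}\ge b^-_{N-r}\ge a^-_{N-r+1}$ give $0\le m_r\le l_r$ with the same $l_r$ as above. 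Subcase (3) is the boundary index, where in case (ii) one has $y_k=x_kq^{a^+_k-b^+_k}$ and in case (i) one has $y_k=x_{k+1}q^{a^-_l-b^-_l}$; in both cases the relevant inequality in Definition \ref{def1.interlace} reads $a^+_k\ge b^+_k$ or $a^-_l\ge b^-_l$ with no lower bound (this is the clause ``$a^\pm$ should be replaced by $a^\pm\cup\{-\infty\}$''), which is exactly the assertion that $m_k\in\Z_{\ge0}$ is arbitrary.

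The converse direction follows from the same formulas read backwards, using that $b\mapsto X_{N-1}(b)$ is a bijection $\DSign(N-1)\to\Om_{N-1}$ (as noted after Definition \ref{def1.C}): given $Y\in\Om_{N-1}$ satisfying the listed constraints, the values $m_r$ uniquely reconstruct $b^+$ and $b^-$ with the correct lengths and the correct interlacement inequalities. I do not expect any genuine difficulty here; the only subtlety is the reversed indexing on the negative side (since $x_{k+1},\dots,x_N$ enumerate $a^-$ in the opposite order), which has to be handled carefully to match the index $N-r$ correctly between the $x$-enumeration and the $a^-$-enumeration.
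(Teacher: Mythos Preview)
Your proposal is correct and takes essentially the same approach as the paper, which simply records the proof as ``Evident from Definition~\ref{def1.interlace}.'' You have unpacked the bookkeeping carefully (including the reversed indexing on the negative side), so your write-up is considerably more detailed than the paper's one-line dismissal, but there is no substantive difference in strategy.
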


\begin{proof}
Evident from Definition \ref{def1.interlace}.
\end{proof}

Note that in all cases we have $x_{r+1}\le y_r\le x_r$. 

\subsection{Scheme of proof}\label{sect5.2}

The coherency relation \eqref{eq1.I} can now be rewritten in the form 
\begin{equation}\label{eq5.F}
\sum_{Y\in\Om_{N-1}}\LaN(X,Y)\frac{P_{\nu\mid N-1}(Y;q,t)}{(t^{N-1};q,t)_\nu}=\frac{P_{\nu\mid N}(X;q,t)}{(t^N;q,t)_\nu},
\end{equation}
where $X\in\Om_N$ and $\nu\in\Y(N-1)$ are arbitrary.

We are going to prove that \eqref{eq5.F} holds true for a certain matrix $\LaN$ of format $\Om_N\times\Om_{N-1}$ whose  entries $\LaN(X,Y)$ are strictly positive for $X\succ Y$ and equal $0$ otherwise. Once this is done, the remaining claims of Theorem A will follow quickly. Indeed, the fact that the row sums of $\LaN$ equal $1$ (meaning that $\LaN$ is stochastic) is equivalent to the simplest particular case of \eqref{eq5.F} corresponding to $\nu=\varnothing$,  and the uniqueness of the matrix  is proved easily (Lemma \ref{lemma5.uni} below).  

As in the context of section \ref{sect4}, we derive the desired coherency relation \eqref{eq5.F} from Okounkov's formula \eqref{eq3.B}. However, we can no longer simply substitute $Z=X$ into \eqref{eq3.B}. The reason is that in doing so, for general $(q,t)$ and $N\ge3$, we can stumble upon singularities. This is shown in Example \ref{example5.A}. 

Our strategy is the following. First, we represent the multiple $q$-integral on the left-hand side of \eqref{eq3.B} as a multiple sum (Lemma \ref{lemma5.A} below). Next, instead of directly substitute $Z=X$ into this sum, we specialize $z_i\to x_i$ step by step, by sorting out the coordinates in a special order depending on $k=k(X)$, as indicated in Definition \ref{def5.C} below. (A somewhat similar trick works in a different context, related to the $R$-matrix formalism, see e. g. Nazarov--Tarasov \cite[\S2]{NT}.)  

We show that, under assumption that $t$ is in general position with respect to $q$, in the course of this procedure, each term of our multiple sum remains well defined and finally has a limit. Many terms actually vanish in the limit, but those that survive give a weighted sum over the configurations $Y\prec X$, just as on the left-hand side of \eqref{eq5.F}. 

After that we check that in the final sum, the constraint on $t$ becomes inessential and can be dropped. Moreover, the weights $\LaN(X,Y)$ turn out to be strictly positive.  

\subsection{Uniqueness claim in Theorem A}\label{sect5.3}
This claim follows from the next lemma, which is similar to \cite[Lemma 4.1]{Ols-2016}. We fix $K\in\{1,2,\dots\}$ and define 
$\wt\Om_K$ as the disjoint union of the sets $\Om_k$ with $0\le k\le K$. Next, given $A>0$, we denote by $\wt\Om_K[-A,A]$ the subset of those configurations in $\wt\Om_K$ that are entirely contained in the closed interval $[-A,A]$. Recall that $\Sym(K)$ is our notation for the algebra of symmetric polynomials in $K$ variables. Any polynomial $P\in\Sym(K)$ can be viewed as a function on $\wt\Om_K[-A,A]$, with zeroes added as arguments if needed. Given a finite measure $M$ on $\wt\Om_K[-A,A]$, we can form its pairing  $\langle M, P\rangle$ with any $P\in\Sym(K)$:
$$
\langle M, P\rangle:=\sum_{X\in\wt\Om_K[-A,A]} P(X)M(X).
$$
This makes sense because $\wt\Om_K[-A,A]$ (as well as $\wt\Om_K$) is a countable set; here we tacitly assume that the sigma-algebra on $\wt\Om_K$ is generated by the singletons. 

\begin{lemma}\label{lemma5.uni}
In this notation,  any finite measure on $\wt\Om_K[-A,A]$ is uniquely determined by its pairings with the  Macdonald polynomials indexed by partitions $\nu\in\Y(K)$. 
\end{lemma}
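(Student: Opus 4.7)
The plan is to reduce the claim to a density/separation argument on a compactification. Since the family $\{P_{\nu\mid K}(\ccdot\,;q,t):\nu\in\Y(K)\}$ is a graded linear basis of $\Sym(K)$ (Macdonald \cite[Ch.~VI, \S4]{M}), knowing $\langle M,P_{\nu\mid K}\rangle$ for all $\nu\in\Y(K)$ is algebraically equivalent to knowing $\langle M,P\rangle$ for every $P\in\Sym(K)$. Thus the task becomes: show that a finite measure on $\wt\Om_K[-A,A]$ is determined by the integrals of all $P\in\Sym(K)$.

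Next, I would embed $\wt\Om_K[-A,A]$ into the compact metric space $\mathbf K:=[-A,A]^K/\mathfrak S_K$ of $K$-point multisets in $[-A,A]$ by zero-padding: if $X\in\Om_k$ with $0\le k\le K$, send $X$ to the $S_K$-orbit of $(X,\underbrace{0,\dots,0}_{K-k})$. This map is injective, because by Definition \ref{def1.C} every point of every $X\in\Om_k$ is nonzero, so the number of zeros in the padded tuple equals $K-k$; this recovers first $k$ and then $X$ itself. Moreover, any $P\in\Sym(K)$, viewed as a continuous function on $\mathbf K$, pulls back on $\wt\Om_K[-A,A]$ to the same value as the evaluation used in the pairing $\langle M,P\rangle$, since $P$ is unchanged by appending zero arguments only through its constant term structure, and in any case the pairing is defined precisely via this zero-padded evaluation.

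Now push $M$ forward along the embedding to get a finite Borel measure $\wt M$ on $\mathbf K$. The subalgebra $\Sym(K)\subset C(\mathbf K)$ contains the constants and separates points of $\mathbf K$ (already the first $K$ power sums $p_1,\dots,p_K$ distinguish any two $K$-multisets via Newton's identities). Hence by the Stone--Weierstrass theorem it is uniformly dense in $C(\mathbf K)$. The Riesz representation theorem then forces $\wt M$ to be determined by the numbers $\int_{\mathbf K}P\,d\wt M=\langle M,P\rangle$, and injectivity of the embedding recovers $M$ from $\wt M$.

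The main obstacle I anticipate is essentially bookkeeping rather than analysis: one must be careful that the zero-padding map is truly injective on the disjoint union $\wt\Om_K=\bigsqcup_{k=0}^K\Om_k$, which relies on the fact that no configuration in any $\Om_k$ has $0$ as a coordinate (and also that $[X]\subseteq[-A,A]$ really forces all $k$ coordinates to lie in $[-A,A]$, so padded multisets live in $\mathbf K$). Once this is in place, everything else is a standard combination of Stone--Weierstrass, Riesz, and the change of basis in $\Sym(K)$ between the Macdonald basis and, say, the power-sum basis.
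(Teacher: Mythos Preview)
Your proof is correct and follows essentially the same route as the paper: reduce from Macdonald polynomials to all of $\Sym(K)$ via the basis property, then use compactness together with Stone--Weierstrass to conclude that symmetric polynomials determine the measure. The only difference is packaging: the paper topologizes $\wt\Om_K$ directly (via the $\eps$-closeness uniform structure defined later in \S\ref{sect6.1}) and works on the compact subset $\wt\Om_K[-A,A]$, whereas you achieve the same effect by an explicit zero-padding embedding into $[-A,A]^K/\mathfrak S_K$; both amount to the same compact model, and your version has the minor advantage of avoiding the forward reference.
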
 

\begin{proof}
In section \ref{sect6.1} we define a nontrivial Hausdorff topology on $\wt\Om_K$. In this topology,  $\wt\Om_K[-A,A]$ is a compact subset. Although the topology is not discrete, this does not affect the Borel structure of the countable space $\wt\Om_K$:  the Borel sets  are simply arbitrary subsets, as for the discrete topology. Therefore, any finite measure on $\wt\Om_K[-A,A]$ is uniquely determined by its pairings with continuous functions.

Further, polynomials $P\in\Sym(K)$ are continuous functions on $\wt\Om_K[-A,A]$. Moreover, they separate points. Therefore, by virtue of the Stone--Weierstrass theorem, they form a dense subalgebra in the algebra of real-valued continuous functions on $\wt\Om_K[-A,A]$. Finally, the Macdonald polynomials $P_{\nu\mid K}(\ccdot;q,t)$ with $\nu\in\Y(K)$ form a basis in $\Sym(K)$. This proves the lemma. 
\end{proof}

In the context of Theorem A, we apply the lemma to the measure $\LaN(X, \ccdot)$ by taking $K=N-1$ and choosing the interval $[-A,A]$ so large that it contains all points of $X$. This will guarantee that the measure $\La^N_{N-1}(X,\ccdot)$ is supported on 
$\wt\Om_{N-1}[-A,A]$ because of the interlacement condition.

\subsection{Reorganization of Okounkov's formula}
To write the $q$-integral \eqref{eq3.B} as a multiple sum we need to introduce a suitable notation.  

Let 
$$
\eps:=(\eps(1),\dots,\eps(N-1))\in\{0,1\}^{N-1}
$$
denote an arbitrary binary word of length $N-1$, and let
$$
m:=(m_1,\dots,m_{N-1})\in\Z_{\ge0}^{N-1}
$$
be an arbitrary $(N-1)$-tuple of nonnegative integers. Given $\eps$ and $m$ as above,  we assign to every ordered $N$-tuple  $Z=(z_1,\dots,z_N)\in\C^N$ an $(N-1)$-tuple $\wt Z\in\C^{N-1}$:
$$
\wt Z:=(\wt z_1,\dots, \wt z_{N-1}):=(z_{1+\eps(1)}q^{m_1},\dots, z_{N-1+\eps(N-1)}q^{m_{N-1}}).
$$

Next, we set 
\begin{equation}\label{eq5.R_epsm}
 R_{\eps,m}(Z;q,t):=\prod_{r=1}^{N-1}\prod_{s=1}^N\frac{(\wt z_rq/z_s;q)_\infty}{(\wt z_rt/z_s;q)_\infty} \prod_{1\le i\ne j\le N}\frac{(z_it/z_j;q)_\infty}{(z_iq/z_j;q)_\infty}.
\end{equation}
Or, in more detailed notation,
\begin{equation}\label{eq5.E}
 R_{\eps,m}(Z;q,t):=\prod_{r=1}^{N-1}\prod_{s=1}^N\frac{(z_{r+\eps(r)}q^{m_r+1}/z_s;q)_\infty}{(z_{r+\eps(r)} q^{m_r} t/z_s;q)_\infty} \prod_{1\le i\ne j\le N}\frac{(z_it/z_j;q)_\infty}{(z_iq/z_j;q)_\infty}.
\end{equation}
Note that $(a;q)_\infty$ is an entire function in $a\in\C$, with simple zeroes at the points $1,q^{-1}, q^{-2},\dots$\,. If follows that $ R_{\eps,m}(Z;q,t)$ is a meromorphic function in $N$ variables $z_1,\dots,z_N$. Because this function actually depends only on the ratios of the variables, it can be regarded as a meromorphic function on the projective space $\C\mathbb P^{N-1}$.

Finally, we set
\begin{equation}\label{eq5.K}
\wt C_N(q,t):=(1-q)^{N-1} C_N(q,t)=\frac{((t;q)_\infty)^N}{(t^N;q)_\infty((q;q)_\infty)^{N-1}}
\end{equation}

\begin{lemma}\label{lemma5.A}
The left-hand side of Okounkov's formula \eqref{eq3.B}  can be written as the following multiple series
\begin{equation*}
\frac{\wt C_N(q,t)}{V(Z)}\sum_{\eps\in\{0,1\}^{N-1}}\sum_{m\in\Z_{\ge0}^{N-1}}V(\wt Z)  R_{\eps,m}(Z;q,t)  \prod\limits_{r=1}^{N-1} (-1)^{\eps(r)}\wt z_r\ F(\wt Z) ,
\end{equation*}
where
$$
F(\wt Z):=\frac{P_{\nu\mid N-1}(\wt z_1,\dots,\wt z_{N-1};q,t)}{(t^{N-1};q,t)_\nu}.
$$
\end{lemma}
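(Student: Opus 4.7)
The plan is to unfold the $(N-1)$-fold $q$-integral in \eqref{eq3.B} by applying the definition \eqref{eq3.E} in each integration variable, and then to collect terms. The whole argument is a direct bookkeeping; no new ideas intervene.

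For each $r=1,\dots,N-1$, split $\int_{z_{r+1}}^{z_r} f(w_r)\,d_q w_r = \int_0^{z_r} f(w_r)\,d_q w_r - \int_0^{z_{r+1}} f(w_r)\,d_q w_r$ by the first line of \eqref{eq3.E}, and encode the choice by $\eps(r)\in\{0,1\}$: set $\eps(r)=0$ when keeping the $\int_0^{z_r}$ term (with sign $+$), and $\eps(r)=1$ when keeping the $-\int_0^{z_{r+1}}$ term (with sign $-$). By the second line of \eqref{eq3.E}, each inner integral $\int_0^{z_{r+\eps(r)}} f(w_r)\,d_q w_r$ expands as $(1-q)\sum_{m_r\ge 0} f(\wt z_r)\,\wt z_r$ with $\wt z_r:=z_{r+\eps(r)}q^{m_r}$, matching the notation of the lemma.

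Iterate this expansion over $r=1,\dots,N-1$ applied to the full integrand
$$
g(w_1,\dots,w_{N-1}) := V(W)\,R(z_1,\dots,z_N;w_1,\dots,w_{N-1};q,t)\,\frac{P_{\nu\mid N-1}(W;q,t)}{(t^{N-1};q,t)_\nu}.
$$
The resulting expression for the iterated $q$-integral is
$$
(1-q)^{N-1}\sum_{\eps\in\{0,1\}^{N-1}}\sum_{m\in\Z_{\ge0}^{N-1}}\prod_{r=1}^{N-1}\bigl((-1)^{\eps(r)}\wt z_r\bigr)\,g(\wt Z).
$$
Substituting $W=\wt Z$ inside $R(Z;W;q,t)$ is precisely the definition \eqref{eq5.R_epsm} of $R_{\eps,m}(Z;q,t)$, while $V(W)$ and the Macdonald-polynomial factor transport tautologically under $w_r\mapsto\wt z_r$; hence $g(\wt Z) = V(\wt Z)\,R_{\eps,m}(Z;q,t)\,F(\wt Z)$. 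Multiplying by the outer prefactor $C_N(q,t)/V(Z)$ and combining $(1-q)^{N-1}$ with $C_N(q,t)$ to form $\wt C_N(q,t)$ via \eqref{eq5.K} yields the displayed series.

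The only point requiring care is the unambiguity of the double summation. Since the outer sum over $\eps$ is finite, this reduces to convergence of each inner iterated series over $m\in\Z_{\ge0}^{N-1}$. Under Okounkov's general-position hypothesis on $(Z,t)$, the denominators in $R_{\eps,m}(Z;q,t)$ do not vanish anywhere on the summation lattice, and absolute convergence of the iterated series follows from the geometric decay $\wt z_r=O(q^{m_r})$ together with the standard bounds on $(\ccdot;q)_\infty$ for $|q|<1$. No substantive obstacle arises beyond this bookkeeping.
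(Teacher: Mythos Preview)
Your argument is correct and follows essentially the same route as the paper: split each one-variate $q$-integral as the difference $\int_0^{z_r}-\int_0^{z_{r+1}}$, encode the choice by $\eps(r)$, expand each piece as a series over $m_r\ge0$, and identify the resulting summand with $V(\wt Z)R_{\eps,m}(Z;q,t)F(\wt Z)$. Your added remark on absolute convergence under the general-position hypothesis is a welcome bit of extra care that the paper leaves implicit.
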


\begin{proof}
This follows from the very definition of the $q$-integral. 
Indeed, we represent each of the one-variate $q$-integrals in \eqref{eq3.B} as the difference of two $q$-integrals,
\begin{equation}\label{eq5.D}
\int^{z_r}_{z_{r+1}} (\ccdot)d_q w_r=\int^{z_r}_0 (\ccdot)d_q w_r-\int^{z_{r+1}}_0(\ccdot)d_q w_r,
\end{equation}
and then write the $q$-integrals on the right as infinite series over $m_r\in\Z_{\ge0}$. Namely, in the first $q$-integral we set $w_r=z_r q^{m_r}$, and in the second $q$-integral we set $w_r=z_{r+1} q^{m_r}$. Then the whole $(N-1)$-fold $q$-integral in \eqref{eq3.B} turns into the sum of $2^{N-1}$ summands each of which is a series over  $m\in\Z_{\ge0}^{N-1}$. The summands are indexed by the binary words $\eps$: namely,  $\eps(r)=0$ encodes the choice of the first summand on the right-hand side of \eqref{eq5.D}, while $\eps(r)=1$ encodes the choice of the second one.
\end{proof}

\subsection{Singularities of the function $ R_{\eps,m}(Z;q,t)$} 
Let  us fix arbitrary $\eps\in\{0,1\}^{N-1}$, $m\in\Z_{\ge0}^{N-1}$, and $X=(x_1>\dots>x_N)\in\Om_N$. Next, suppose that $t$ is in general position with respect to $q$: in fact it suffices to require that 
\begin{equation}\label{eq5.G}
t,t^2,\dots,t^{N-1}\notin q^\Z:=\{q^n: n\in\Z\}.
\end{equation}

\begin{lemma}\label{lemma5.B}
Under these assumptions, the only factors in the denominators of \eqref{eq5.E} that  may vanish at the point $Z=X$ are those of the form $(z_{r+\eps(r)}q^{m_r}t/z_s;q)_\infty$, where one the following two conditions holds

{\rm(1)} $x_{r+\eps(r)}>0$, $x_s>0$, and $s=r+\eps(r)+1$,

{\rm(2)} $x_{r+\eps(r)}<0$, $x_s<0$, and $s=r+\eps(r)-1$.

\end{lemma}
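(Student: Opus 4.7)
The plan is a direct case analysis resting on two elementary observations: first, $(a;q)_\infty$ vanishes exactly when $a \in \{q^{-n} : n \in \Z_{\ge 0}\}$, so in particular $a$ must be real and positive; second, Lemma \ref{lemma5.Omega} shows that any ratio $x_i/x_j$ of same-sign points of $X$ has the form $q^\alpha t^\gamma$ with integers $\alpha,\gamma$ satisfying $|\gamma| = |i-j|$. Together these reduce the statement to a finite bookkeeping controlled by the general position hypothesis \eqref{eq5.G}.

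First I would dispose of the denominator factors $(z_iq/z_j;q)_\infty$ at $Z = X$. Opposite-sign pairs give a negative argument, hence nonvanishing. For same-sign pairs with $i \ne j$, the observations above yield $x_iq/x_j = q^{1+\alpha} t^\gamma$ with $1 \le |\gamma| \le N-1$; setting this equal to $q^{-n}$ would force $t^\gamma \in q^\Z$, which is ruled out by \eqref{eq5.G} together with its automatic symmetric consequence $t^{-1},\dots,t^{-(N-1)} \notin q^\Z$.

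Next I would turn to the factors $(z_{r+\eps(r)} q^{m_r} t/z_s;q)_\infty$; write $i := r + \eps(r)$. Opposite signs again kill the factor, and the case $i = s$ forces $t = q^{-n-m_r} \in q^\Z$, excluded. Four same-sign sub-cases with $i \ne s$ remain, and in each of them Lemma \ref{lemma5.Omega} gives a chain computation of the form $x_iq^{m_r}t/x_s = q^\beta t^\eta$, where $\beta \in \Z$ and $\eta = i-s+1$ when both points are positive, $\eta = s-i+1$ when both are negative. The equation $q^\beta t^\eta = q^{-n}$ with $n \ge 0$ then splits as follows. If $\eta = 0$, this happens precisely when $s = i+1$ in the both-positive case and $s = i-1$ in the both-negative case; the resulting constraint $n = -\beta$ has solutions, so vanishing genuinely can occur, and these are conditions (1) and (2). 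If $\eta \ne 0$ in a right-direction sub-case ($i < s$ both positive, or $i > s$ both negative), then $|\eta| \in \{1,\dots,N-2\}$ and general position forbids $t^\eta \in q^\Z$. Finally, in the wrong-direction sub-cases ($i > s$ both positive, or $i < s$ both negative) the chain formulas give $\eta \ge 2$ and $\beta \ge 0$, whence $q^\beta t^\eta < 1 \le q^{-n}$ — a positivity argument, independent of general position.

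The main delicate point is the sign bookkeeping that rules out the wrong-direction sub-cases by positivity rather than by general position: in those sub-cases $|\eta|$ can in principle reach $N$, which lies outside the range covered by \eqref{eq5.G}, so nothing weaker than the positivity of $\beta$ and $\eta-1$ will do. It is exactly this dichotomy that makes the assumption \eqref{eq5.G} on $t, t^2, \dots, t^{N-1}$ precisely the right strength, and once the four same-sign sub-cases are tabulated, only conditions (1) and (2) survive as vanishing loci.
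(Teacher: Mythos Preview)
Your proof is correct and follows the same case analysis as the paper (opposite signs trivial; same signs reduce to whether a power of $t$ lies in $q^{\Z}$, then invoke \eqref{eq5.G}). You are in fact more careful than the paper on one point: you isolate the ``wrong-direction'' sub-cases ($i>s$ both positive, or $i<s$ both negative) and dispose of them by the positivity bound $q^{\beta}t^{\eta}<1$, correctly observing that there $|\eta|$ can reach $N$ and hence escape the range covered by \eqref{eq5.G}; the paper elides this edge case with the single phrase ``which exactly means that either (1) or (2) holds.''
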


\begin{proof}
Let us begin with the second product in \eqref{eq5.E}. The factors in the denominator have the form $(z_i q/z_j;q)_\infty$ with $i\ne j$. We claim that they do not vanish at $Z=X$. Indeed, if $x_i$ and $x_j$ are of opposite sign, then $x_i q/x_j$ is negative and hence $(x_i q/x_j;q)_\infty>0$. If $x_i$ and $x_j$ are of same sign, then $x_i q/x_j$ lies in $t^{i-j}q^\Z$.  Since $i\ne j$, we see from \eqref{eq5.G} that $x_i q/x_j$ is not in $q^\Z$, which entails  $(x_i q/x_j;q)_\infty\ne0$. 

Let us turn to the first product in \eqref{eq5.E}. The factors in the denominator, at $Z=X$, have the form 
$$
(x_{r+\eps(r)}q^{m_r} t/x_s;q)_\infty, \qquad r=1,\dots,N-1, \quad s=1,\dots,N, \quad m_r\in\Z_{\ge0}.
$$
Again, if $x_{r+\eps(r)}$ and $x_s$ are of opposite sign, then vanishing is impossible. Assume they are of the same sign. Then vanishing may happen only if $x_{r+\eps(r)} t/x_s\in q^\Z$, which exactly means that either (1) or (2) holds. 
\end{proof}

Whether a singularity really occurs depends also on the factors in the numerator. At first glance, the whole picture looks complicated, but our limit procedure makes it possible to avoid singularities.

\begin{definition}\label{def5.A}
Given $X\in\Om_N$, let $k=k(X)$ be the number of positive coordinates in $X$, so that  
$$
x_1>\dots>x_k>0>x_{k+1}>\dots>x_N.
$$
We say that a given binary word $\eps\in\{0,1\}^{N-1}$ is \emph{$k$-adapted} if
$$
\eps(r)=0 \quad \text{for} \quad 1\le r\le k-1 \quad \text{and} \quad \eps(r)=1 \quad \text{for} \quad k+1\le r\le N-1.
$$ 

Note that if $k=N$ (that is, all $x_i$'s are positive) or  $k=0$ (that is, all $x_i$'s are negative), then there is a unique choice for $\eps$.  In the remaining cases, when $1\le k\le N-1$, there are two $k$-adapted $\eps$'s,  because $\eps(k)$ may take both values, $0$ and $1$. 
\end{definition}

\begin{corollary}\label{cor5.A}
Suppose, as above, that $t$ satisfies the constraint \eqref{eq5.G}. Let $X\in\Om_N$ and assume that $\eps$ is $k(X)$-adapted. 

Then the meromorphic function $ R_{\eps,m}(Z;q,t)$ is nonsingular at $Z=X$ for any $m\in\Z_{\ge0}^{N-1}$. 
\end{corollary}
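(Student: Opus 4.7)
The plan is to match, factor by factor, the potential denominator zeros identified by Lemma \ref{lemma5.B} against compensating zeros coming from the second product in \eqref{eq5.E}. First I would invoke Lemma \ref{lemma5.B}: under \eqref{eq5.G}, the only factors in the denominator of $ R_{\eps,m}(Z;q,t)$ that can vanish at $Z=X$ are of the form $(z_{r+\eps(r)}q^{m_r}t/z_s;q)_\infty$ with $(r,s)$ in case (1) or (2) of that lemma. Imposing $k$-adaptedness of $\eps$ and consulting Lemma \ref{lemma5.Omega}, a short case analysis shows that condition (1) forces $r\in\{1,\dots,k-1\}$, $\eps(r)=0$, $s=r+1$ (yielding $(z_rq^{m_r}t/z_{r+1};q)_\infty$), while condition (2) forces $r\in\{k+1,\dots,N-1\}$, $\eps(r)=1$, $s=r$ (yielding $(z_{r+1}q^{m_r}t/z_r;q)_\infty$). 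The index $r=k$, where $\eps(k)$ is not fixed by adaptedness, contributes no candidate: for either choice of $\eps(k)$ the points $x_{r+\eps(r)}$ and $x_s$ sit on opposite sides of $0$.

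Next I would quantify the singularities precisely. In case (A), Lemma \ref{lemma5.Omega} gives $x_{r+1}=x_rq^{l_r}t$ with $l_r\in\Z_{\ge0}$, so $(z_rq^{m_r}t/z_{r+1};q)_\infty\big|_{Z=X}=(q^{m_r-l_r};q)_\infty$. This vanishes precisely when $m_r\le l_r$; when it does, the zero is simple and is contributed solely by the factor indexed by $n=l_r-m_r$, which is the linear expression
$$1-z_rq^{l_r}t/z_{r+1}.$$
The analogous calculation with $x_r=x_{r+1}q^{l_r}t$ handles case (B).

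The key observation is then the following algebraic coincidence. The second product in \eqref{eq5.E} contains the numerator factor $(z_rt/z_{r+1};q)_\infty$, which at $Z=X$ equals $(q^{-l_r};q)_\infty$ and has a simple zero contributed by the $n=l_r$ term, namely \emph{again} the polynomial $1-z_rq^{l_r}t/z_{r+1}$. Since the vanishing linear factors in numerator and denominator are literally identical (not merely cut out the same hypersurface), they cancel exactly in $ R_{\eps,m}$, and the remaining factors are regular and nonzero at $Z=X$. The symmetric argument, using $(z_{r+1}t/z_r;q)_\infty$, disposes of case (B).

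Finally I would combine: the cancellations for distinct $r$ involve disjoint pairs of variables and are therefore independent, so performing all of them yields a function regular at $Z=X$. The remaining denominator factors are nonvanishing at $X$ by Lemma \ref{lemma5.B}, and a short check using \eqref{eq5.G} shows that no extra zeros of the first numerator product $\prod_{r,s}(z_{r+\eps(r)}q^{m_r+1}/z_s;q)_\infty$ create a problem (extra numerator zeros are harmless for regularity). The main pitfall is the enumeration in the first step: one must carefully verify that no ``long-range'' denominator factor with $|s-r-\eps(r)+1|\ge 2$ slips through, and for this the restriction $t^j\notin q^\Z$ for $1\le j\le N-1$ is exactly what is needed. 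Once the enumeration is correct, the proof reduces to spotting the shared linear factor $1-z_rq^{l_r}t/z_{r+1}$ in both numerator and denominator, after which the cancellation is automatic.
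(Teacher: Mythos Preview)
Your proof is correct and follows essentially the same approach as the paper: invoke Lemma \ref{lemma5.B}, use $k$-adaptedness to pin the dangerous denominator factors down to the ``nearest-neighbour'' ones $(z_rq^{m_r}t/z_{r+1};q)_\infty$ (respectively $(z_{r+1}q^{m_r}t/z_r;q)_\infty$), and then cancel each against the corresponding numerator factor $(z_rt/z_{r+1};q)_\infty$ (respectively $(z_{r+1}t/z_r;q)_\infty$) from the second product in \eqref{eq5.E}. The only difference is cosmetic: the paper groups each such pair as a single ratio and uses the identity $(a;q)_\infty/(aq^{m};q)_\infty=(a;q)_{m}$ to recognize it as a finite Pochhammer symbol (a polynomial, hence regular everywhere), which avoids your case split on $m_r\le l_r$ versus $m_r>l_r$ and the explicit identification of the vanishing linear factor $1-z_rq^{l_r}t/z_{r+1}$.
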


\begin{proof} 
Set $k=k(X)$. We split $R_{\eps,m}(Z;q,t)$ into the product of two expressions, 
$$
 R_{\eps,m}(Z;q,t)= R^{(1)}_{\eps,m}(Z;q,t) R^{(2)}_{\eps,m}(Z;q,t),
$$
where
\begin{equation*}
 R^{(1)}_{\eps,m}(Z;q,t):=\prod_{r=1}^{k-1}\frac{(z_r t/z_{r+1};q)_\infty}{(z_r q^{m_r}t/z_{r+1};q)_\infty} \cdot \prod_{r=k+1}^{N-1}\frac{(z_{r+1} t/z_r;q)_\infty}{(z_{r+1} q^{m_r}t/z_r;q)_\infty}
\end{equation*}
and $ R^{(2)}_{\eps,m}(Z;q,t)$ consists of the remaining factors. By virtue of Lemma \ref{lemma5.B}, all factors in the denominator of $R_{\eps,m}(Z;q,t)$  that may vanish at $Z=X$ are assembled in the denominator of $R^{(1)}_{\eps,m}(Z;q,t)$, so that $ R^{(2)}_{\eps,m}(Z;q,t)$ is nonsingular at $X$. On the other hand, $ R^{(1)}_{\eps,m}(Z;q,t)$ can be written as
\begin{equation}
 R^{(1)}_{\eps,m}(Z;q,t)=\prod_{r=1}^{k-1}(z_r t/z_{r+1};q)_{m_r} \cdot \prod_{r=k+1}^{N-1}(z_{r+1} t/z_r;q)_{m_r}
\end{equation}
and hence is nonsingular, too.
\end{proof}

\subsection{Example of singularity}

One can show that for $N=2$, the function $ R_{\eps,m}(Z;q,t)= R_{\eps,m}(z_1,z_2;q,t)$ is nonsingular at each point $X\in\Om_2$, for any $(\eps,m)$. The next example shows that this is not always true for $N\ge3$.  

\begin{example}\label{example5.A}
Below we use the standard shorthand notation
$$
(a_1,\dots,a_n;q)_\infty:=\prod_{i=1}^n (a_i;q)_\infty.
$$
Let $N=3$, $\eps=(1,0)$, and $m=(0,0)$. The corresponding function $ R_{\eps,m}(z_1,z_2,z_3;q,t)$ can be represented in the form
\begin{equation*}
\frac{(z_1t/z_3, z_3t/z_1, z_3t/z_2, z_2q/z_3, z_2q/z_1,z_1tq/z_2, q,q;q)_\infty\, z_3}
{(z_1q/z_2, z_1q/z_3, z_3q/z_1, z_3q/z_2,z_2t/z_1, z_2tq/z_3, t,t;q)_\infty\,z_2}
\cdot \frac{z_2-z_1t}{z_3-z_2t}.
\end{equation*}
Suppose that $t,t^2\notin q^\Z$. At the point $X=(1,t,t^2)$ (which is contained in $\Om_3$),  the first fraction is nonsingular and nonvanishing, while the second fraction has a singularity. Hence the whole expression is also singular at $X$.  
\end{example}

\subsection{Admissible pairs $(\eps,m)$}\label{sect5.adm}

Let $X\in\Om_N$ and $k:=k(X)$ (Definition \ref{def5.A}). We use the description of  $\Om_N$ given in Lemma \ref{lemma5.Omega}. As pointed out in Lemma \ref{lemma5.interlace}, there exist nonnegative  integers $l_1,\dots,l_{N-1}$ such that
\begin{equation*}
x_{i+1}=x_i t q^{l_i}, \quad i=1,\dots,k-1; \qquad x_i=x_{i+1} t q^{l_i}, \quad i=k+1,\dots,N-1.
\end{equation*}

\begin{definition}\label{def5.B}
Let $\eps\in\{0,1\}^{N-1}$ and $m\in\Z_{\ge0}^{N-1}$. Let us say that a pair $(\eps,m)$ is \emph{$X$-admissible} if $\eps$ is $k$-adapted in the sense of Definition \ref{def5.A} and, moreover, $m_i\le l_i$ for all $i=1,\dots,N-1$, except $i=k$.
\end{definition}
 
In the next lemma we restate the interlacement relation $Y\prec X$. 

\begin{lemma}\label{lemma5.F}
There is a one-to-one correspondence between the $X$-admissible pairs $(\eps,m)$ and  the configurations $Y=(y_1>\dots>y_{N-1})\prec X$, defined by
$$
y_i=x_{i+\eps(i)}q^{m_i}, \quad i=1,\dots,N-1.
$$
That is,
$$
y_i=x_i q^{m_i}, \quad i=1,\dots,k-1; \qquad y_i=x_{i+1}q^{m_i}, \quad i=k+1,\dots,N-1,
$$
and $y_k=x_k q^{m_k}$ or $y_k= x_{k+1}q^{m_k}$ depending on whether $\eps(k)$ equals $0$ or $1$. 
\end{lemma}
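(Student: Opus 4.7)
The plan is to read off the bijection directly by comparing Definition \ref{def5.B} (admissibility) with Lemma \ref{lemma5.interlace} (the coordinate characterization of interlacement), so the proof is essentially a case-by-case unfolding. Set $k=k(X)$ and recall the integers $l_1,\dots,l_{N-1}$ defined by $x_{i+1}=x_itq^{l_i}$ for $i<k$ and $x_i=x_{i+1}tq^{l_i}$ for $i>k$.

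Starting from the forward direction, given an $X$-admissible pair $(\eps,m)$, I would define $y_i:=x_{i+\eps(i)}q^{m_i}$ for $i=1,\dots,N-1$ and verify that the resulting $Y$ lies in $\Om_{N-1}$ and satisfies $Y\prec X$. For $1\le i<k$, $k$-adaptedness forces $\eps(i)=0$ and admissibility gives $0\le m_i\le l_i$, which is exactly case (1) of Lemma \ref{lemma5.interlace}. For $k<i\le N-1$, $\eps(i)=1$ and $0\le m_i\le l_i$ reproduce case (2). For $i=k$ in the mixed regime $1\le k\le N-1$, both values of $\eps(k)$ are allowed and $m_k\in\Z_{\ge0}$ is unconstrained, matching case (3).

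The only substantive check in the forward direction is the strict inequalities $y_1>\dots>y_{N-1}$ (and hence the non-vanishing of the $y_i$'s, which is automatic from $x_{i+\eps(i)}\ne 0$). For adjacent indices both on the same side of $k$, I would use $0<t<1$ together with $m_i\le l_i$ and $m_{i+1}\ge 0$: e.g.\ for $i,i+1<k$ one has $y_i/y_{i+1}=q^{m_i}/(tq^{l_i+m_{i+1}})\ge t^{-1}>1$. At the critical index $i=k$, the argument becomes trivial whenever $y_k$ and its neighbor lie on opposite sides of zero; the remaining subcase (where $\eps(k)=1$ so both $y_k$ and $y_{k+1}$ are negative, or analogously $\eps(k)=0$ with $y_{k-1},y_k>0$) reduces to the same $t^{-1}$ estimate. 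This also confirms $Y\in\Om_{N-1}$ via Lemma \ref{lemma5.Omega}.

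For the inverse direction, given $Y\prec X$, Lemma \ref{lemma5.interlace} tells me how to read off $(\eps,m)$ uniquely: for $i\ne k$, the form of $y_i$ and the sign of $x_{i+\eps(i)}$ force the prescribed value of $\eps(i)$, and then $m_i=\log_q(y_i/x_{i+\eps(i)})$ lies in $\{0,\dots,l_i\}$; for $i=k$, the sign of $y_k$ distinguishes the two cases ($\eps(k)=0$ if $y_k>0$, $\eps(k)=1$ if $y_k<0$), after which $m_k$ is determined. The potential obstacle --- ambiguity in defining $\eps(k)$ --- is resolved because $y_k$ is necessarily nonzero (it lies in $\L=\L_+\sqcup\L_-$), so its sign unambiguously selects one branch. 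This makes the two maps mutually inverse, completing the bijection.
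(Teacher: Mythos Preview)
Your proof is correct and takes exactly the same approach as the paper, which simply records that the lemma ``follows at once from Lemma \ref{lemma5.interlace}''; you have merely spelled out the case-by-case matching of Definition \ref{def5.B} with the conditions (1)--(3) of that lemma and added the (routine but welcome) verification of the strict ordering $y_1>\dots>y_{N-1}$. One small inaccuracy: in the general setting of Section \ref{sect5} the point $y_k$ need not lie in $\L=\L_+\sqcup\L_-$ (that lattice is specific to the $t=q^\tau$ case of Section \ref{sect4}), but your argument only needs $y_k\ne0$, which holds since $y_k=x_{k+\eps(k)}q^{m_k}$ with $x_{k+\eps(k)}\ne0$.
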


\begin{proof}
This follows at once from Lemma \ref{lemma5.interlace}.
\end{proof}

\subsection{Limit transition in $R_{\eps;m}(Z;q,t)$ as $Z\to X$ ($t$ generic)}
Fix an arbitrary $X\in\Om_N$, where $N\ge 3$ and denote $k:=k(X)$. We would like to pass to a limit in \eqref{eq5.E} as $Z\to X$. Example \ref{example5.A} shows that one cannot do this directly, because, for some $(\eps,m)$, the meromorphic function $ R_{\eps,m}(Z;q,t)$ may be singular at $Z=X$. To circumvent this obstacle, we let $Z$ approach $X$ along a special path, as described in the following definition.

\begin{definition}[Step-by-step limit transition]\label{def5.C}

(1) If $k=N$, then we first set $z_1=x_1$ keeping $z_2,\dots,z_N$ in general position, next we let $z_2\to x_2$, then we let $z_3\to x_3$, and so on up to the step $z_N\to x_N$. 

(2) If $k=0$, then we perform the same procedure in the inverse direction: $z_N=x_N$, $z_{N-1}\to x_{N-1}$, \dots, $z_1\to x_1$.

(3) If $1\le k\le N-1$, then we combine the two procedures, that is, we begin with $z_1=x_1$ and go up to the step $z_k\to x_k$; then we set $z_N=x_N$ and go up to the step $z_{k+1}\to x_{k+1}$. Equally well we may stop at the step  $z_{k-1}\to x_{k-1}$, then pass to $z_N=x_N$ and proceed up to the step $z_{k+2}\to x_{k+2}$, and finally set $z_k=x_k$, $z_{k+1}=x_{k+1}$. 

\end{definition}

\begin{lemma}\label{lemma5.D}
Assume $t$ satisfies the constraint \eqref{eq5.G}. Fix an arbitrary $X\in\Om_N$ and let $Z$ approach $X$ as described in Definition \ref{def5.C}.  Then $ R_{\eps,m}(X;q,t)$ has a limit for every $(\eps,m)$, and the limit value equals\/ $0$ unless $(\eps,m)$ is $X$-admissible in the sense of Definition \ref{def5.B}. 
\end{lemma}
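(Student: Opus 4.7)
The plan is to exploit the step-by-step structure of Definition~\ref{def5.C} and track, at each stage, which Pochhammer factors in the product \eqref{eq5.E} develop zeros or poles. The argument splits naturally according to whether $\eps$ is $k$-adapted.

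Suppose first that $\eps$ is $k$-adapted. By Corollary~\ref{cor5.A}, $R_{\eps,m}(Z;q,t)$ is holomorphic at $Z=X$, so the iterated limit equals the direct evaluation $R_{\eps,m}(X;q,t)$. Using the decomposition $R_{\eps,m}=R^{(1)}_{\eps,m}\cdot R^{(2)}_{\eps,m}$ from the proof of Corollary~\ref{cor5.A}, the factor $R^{(1)}_{\eps,m}(X;q,t)$ becomes a product of terms of the form
\[
(x_r t/x_{r+1};q)_{m_r}\quad(r<k)\qquad\text{and}\qquad(x_{r+1}t/x_r;q)_{m_r}\quad(r>k).
\]
Since $x_{r+1}/x_r=tq^{l_r}$ (respectively $x_r/x_{r+1}=tq^{l_r}$), each such factor equals $(q^{-l_r};q)_{m_r}$, which vanishes iff $m_r>l_r$. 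The remaining piece $R^{(2)}_{\eps,m}(X;q,t)$ is a finite product of symbols $(\ccdot;q)_\infty$ whose arguments avoid $q^{-\Z_{\ge 0}}$ under the genericity condition \eqref{eq5.G}, hence is nonzero. Thus $R_{\eps,m}(X;q,t)\ne 0$ precisely when $m_r\le l_r$ for every $r\ne k$, which is exactly the $X$-admissibility condition.

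Suppose now that $\eps$ is not $k$-adapted. Let $r_0$ denote the smallest index at which $\eps$ fails to be $k$-adapted; by the reflectional symmetry of the construction about $0$, we may assume $r_0<k$ and $\eps(r_0)=1$. Consider the step of Definition~\ref{def5.C} at which $z_{r_0+1}\to x_{r_0+1}$, with $z_1=x_1,\dots,z_{r_0}=x_{r_0}$ already fixed and $z_{r_0+2},\dots,z_N$ still generic. The key observation is that the second-product factor in \eqref{eq5.E} satisfies
\[
(z_{r_0}t/z_{r_0+1};q)_\infty\;\longrightarrow\;(q^{-l_{r_0}};q)_\infty=0.
\]
The only possible source of cancellation would be a denominator factor $(\wt z_{r'}t/z_s;q)_\infty$ in the first product that tends to $0$ at this step. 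By Lemma~\ref{lemma5.B} and the constraint \eqref{eq5.G} (which excludes $t^c\in q^\Z$ for $1\le c\le N-1$), such vanishing requires $r'+\eps(r')=r_0$ together with $s=r_0+1$. This forces either $\eps(r_0)=0$ (contradicting our choice of $r_0$) or $\eps(r_0-1)=1$ (contradicting the minimality of $r_0$). Hence no cancellation occurs, $R_{\eps,m}$ vanishes at this intermediate step, and subsequent specializations preserve the value $0$.

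The main obstacle is the bookkeeping in the non-adapted case: one must rule out, step by step, every potential cancellation between the zero arising from the second product of \eqref{eq5.E} and the possible singular denominators of the first product. Both the minimality of $r_0$ and the genericity assumption \eqref{eq5.G} play essential roles: the former blocks cancellation via an earlier $\eps(r_0-1)=1$, while the latter blocks cancellation coming from pairs of indices separated by more than one step. Once this analysis is in place, the existence of the iterated limit and its vanishing outside the admissible set both follow without further computation.
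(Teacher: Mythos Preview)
Your argument is essentially correct and follows the same underlying strategy as the paper, but the organization differs. The paper proceeds by a genuine recursion: it factors $R_{\eps,m}(Z)=R_{\eps',m'}(Z')\cdot\wt R_{\eps(1),m_1}(Z)$ and shows that the second factor is finite after the step $z_2\to x_2$ (Variant~1, $\eps(1)=0$) or vanishes (Variant~2, $\eps(1)=1$), then repeats on $R_{\eps',m'}(Z')$. You instead split globally by whether $\eps$ is $k$-adapted, invoking Corollary~\ref{cor5.A} directly in the adapted case and jumping straight to the first offending index $r_0$ in the non-adapted case. Your route is a bit more direct and avoids carrying the auxiliary factor $\wt R$ through the induction; the paper's route has the advantage that finiteness at every intermediate step is produced automatically by the recursion.

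There is one genuine gap in your Case~2. You write ``with $z_1=x_1,\dots,z_{r_0}=x_{r_0}$ already fixed'', but you never verify that the iterated limit is \emph{finite} at each of the steps $z_2\to x_2,\dots,z_{r_0}\to x_{r_0}$. The lemma asserts existence of the iterated limit, so this must be checked; Example~\ref{example5.A} shows that direct substitution can fail, and your argument for vanishing at step $r_0+1$ presupposes that the function is well defined there. The fix is short: since $\eps(r)=0$ for all $r<r_0$, at each intermediate step $z_j\to x_j$ (with $2\le j\le r_0$) the only denominator factor that can vanish is $(z_{j-1}q^{m_{j-1}}t/z_j;q)_\infty$, and it is cancelled by the second-product numerator factor $(z_{j-1}t/z_j;q)_\infty$, leaving the finite Pochhammer $(x_{j-1}t/x_j;q)_{m_{j-1}}$. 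This is exactly the mechanism behind $R^{(1)}_{\eps,m}$ in Corollary~\ref{cor5.A}, and it is what the paper's Variant~1 computation establishes step by step. Once you insert this check, your ``subsequent specializations preserve the value $0$'' is justified because after step $r_0+1$ the numerator contains the constant factor $(x_{r_0}t/x_{r_0+1};q)_\infty=0$, so the restricted function is identically zero as a meromorphic function of the remaining variables.

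Two smaller points: your appeal to Lemma~\ref{lemma5.B} at the critical step is for a partial specialization rather than $Z=X$, so you should remark that any factor involving a still-generic variable is automatically nonzero; and your ``reflectional symmetry'' should be phrased as taking the \emph{largest} failing index when the failure is on the negative side, to match the order of specialization in Definition~\ref{def5.C}.
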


\begin{proof}

Suppose that $k\ge2$. As was pointed out above, the function $Z\mapsto  R_{\eps,m}(Z;q,t)$ depends only on the ratios $z_i/z_j$ of the variables. It follows that the first step, the specialization $z_1=x_1$, presents no difficulty, because all the ratios $z_i/z_j$ remain in general position. Let us justify the second step, the limit transition $z_2\to x_2$. 

Setting
\begin{gather*}
X':=(x_2,\dots,x_N), \quad Z':=(z_2,\dots,z_N), \\
\eps':=(\eps(2),\dots,\eps(N-1)), \quad m':=(m_2,\dots,m_{N-1}),
\end{gather*}
we may write
$$
 R_{\eps,m}(Z;q,t)= R_{\eps',m'}(Z';q,t) \wt R_{\eps(1),m_1}(Z;q,t),
$$
where $\wt R_{\eps(1),m_1}(Z;q,t)$ collects all factors from  $ R_{\eps,m}(Z;q,t)$ that are not contained in $ R_{\eps',m'}(Z';q,t)$. The exact form of $\wt R_{\eps(1),m_1}(Z;q,t)$ depends on whether $\eps(1)$ equals $0$ or $1$; let us examine these two possible variants separately.  

$\bullet$ \emph{Variant} 1: $\eps(1)=0$. Then we have
\begin{multline}\label{eq5.H}
\wt R_{0,m_1}(Z;q,t)\big|_{z_1=x_1}=\prod_{s=1}^N\frac{(z_1 q^{m_1+1}/z_s;q)_\infty}{(z_1 q^{m_1}t/z_s;q)_\infty}\cdot \prod_{j=2}^N\frac{(z_1 t/z_j, \, z_j t/x_1; q)_\infty}{(x_1 q/z_j,\, z_j q/z_1; q)_\infty}\bigg|_{z_1=x_1}\\
=\frac{(q^{m_1+1};q)_\infty}{(q^{m_1}t;q)_\infty}\cdot  \prod_{s=2}^N \frac{(z_1 t/z_s;q)_{m_1}}{(x_1 q/z_s; q)_{m_1}}\cdot \prod_{j=2}^N \frac{(z_j t/x_1; q)_\infty}{(z_j q/x_1; q)_\infty}.
\end{multline}
Look at the final expression in \eqref{eq5.H}.  Here the first fraction is a strictly positive constant. The product over $j$ causes no problem: it can be directly specialized at $Z=X$, because then all its factors will be of the form $(a;q)_\infty$ with $a<1$ and hence are strictly positive. Let us turn now to the product over $s$ and write it as 
\begin{equation}\label{eq5.I}
\frac{(x_1 t/z_2)_{m_1}}{(x_1 q/z_2)_{m_1}}\cdot\prod_{s=3}^N \frac{(x_1 t/z_s)_{m_1}}{(x_1 q/z_s)_{m_1}}
\end{equation}
The product over $s$ does not involve $z_2$, so that it is not sensitive to the specialization $z_2\to x_2$.  Finally, examine the fraction in front of the product in \ref{eq5.I}. At the point $z_2=x_2=x_1q^{l_1}t$, the denominator does not vanish, because $t\notin q^\Z$. As for the numerator,  at the same point, it reduces to $(q^{-l_1};q)_{m_1}$, and this quantity vanishes unless $m_1\le l_1$. 

$\bullet$ \emph{Variant} 2: $\eps(1)=1$.  Then we have
\begin{equation}\label{eq5.J}
\wt R_{1,m_1}(Z;q,t)=\prod_{s=1}^N\frac{(z_2 q^{m_1+1}/z_s;q)_\infty}{(z_2 q^{m_1}t/z_s;q)_\infty}\cdot \prod_{j=2}^N\frac{(z_1 t/z_j, \, z_j t/z_1; q)_\infty}{(z_1 q/z_j,\, z_j q/z_1; q)_\infty}.
\end{equation}
This expression makes sense under the substitution $(z_1,z_2)=(x_1,x_2)$: indeed,  we use the fact that $(z_3,\dots,z_N)$ is  in general position, $t$ is also generic, and $x_2/x_1<1$. But at the point $(z_1,z_2)=(x_1,x_2)$, the factor $(z_1 t/z_2;q)_\infty$ turns into $(q^{-l_1};q)_\infty$ and hence vanishes. So the whole expression in fact disappears.

We conclude that the limit as $z_2\to x_2$ does exist, and only the terms with $\eps(1)=0$ and $m_1\le l_1$ survive. 

Then we apply the same procedure to the function $ R_{\eps',m'}(Z';q,t)$ and so on. If $k=N$, then we may go up to the end and obtain the desired result. Otherwise we stop as $k$ becomes equal $1$. 

After that we begin to move in the opposite direction, starting from $z_N=x_N$. (Or we do that from the very beginning if $k=0$.) Here the argument is similar.  

Finally, the only remaining case is the one with $N=2$, $k=1$. That is, $x_2<0<x_1$. Here the argument is trivial: in the limit, all $q$-Pochhammer factors are of the form $(a;q)_\infty$, where either $0<a<1$ or $a<0$; hence, they are strictly positive (see formulas \eqref{eq5.N=2(1)} and \eqref{eq5.N=2(2)} below).  In this case, all the summands survive in the limit and lead to the desired (infinite) sum over $\{y\}\prec X$.  \end{proof}

\subsection{Computation of $R_{\eps;m}(X;q,t)$ for arbitrary $t$ and admissible $(\eps;m)$}\label{sect5.9}

Let, as above, $X\in\Om_N$. The next lemma is a refinement of Corollary \ref{cor5.A}.

\begin{lemma}\label{lemma5.E}
Let us remove the constraints \eqref{eq5.G}, so that  $t\in(0,1)$ may be arbitrary.
If a pair $(\eps,m)$ is $X$-admissible in the sense of Definition \ref{def5.B}, then the function $ R_{\eps,m}(Z;q,t)$ is nonsingular at the point $Z=X$ and $ R_{\eps,m}(X;q,t)>0$.
\end{lemma}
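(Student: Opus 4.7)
Lemma \ref{lemma5.E} strengthens Corollary \ref{cor5.A} in two respects: it removes the genericity constraint \eqref{eq5.G} on $t$, and it asserts strict positivity. My plan uses the decomposition $R_{\eps,m}=R^{(1)}_{\eps,m}\cdot R^{(2)}_{\eps,m}$ from the proof of that corollary, together with an evaluation at the test point $t=q$ where the formula \eqref{eq5.R_epsm} collapses to the constant $1$.

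\emph{Evaluation of $R^{(1)}$.} Using the gap structure of $X\in\Om_N$ (Lemma \ref{lemma5.Omega}), one has $x_r t/x_{r+1}=q^{-l_r}$ for $r<k$ and $x_{r+1}t/x_r=q^{-l_r}$ for $r>k$, so
\[
R^{(1)}_{\eps,m}(X;q,t)=\prod_{r\ne k}(q^{-l_r};q)_{m_r},
\]
a quantity that does not depend on $t$. Under $X$-admissibility ($m_r\le l_r$), each factor $(q^{-l_r};q)_{m_r}=\prod_{n=0}^{m_r-1}(1-q^{n-l_r})$ is a product of $m_r$ strictly negative real numbers, so $R^{(1)}_{\eps,m}(X;q,t)$ is nonzero with sign $(-1)^{\sum_{r\ne k}m_r}$.

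\emph{Regularity, nonvanishing, and positivity via $t=q$.} Viewing $f(t):=R_{\eps,m}(X;q,t)$ as a function of $t\in(0,1)$, I would establish (i) $f$ is continuous on $(0,1)$; (ii) $f$ has no zeros on $(0,1)$; and (iii) $f(q)=1$. Granted (i)--(iii), the sign of $f$ is locally hence globally constant, and equal to $+1$, yielding $R_{\eps,m}(X;q,t)>0$; nonsingularity at $Z=X$ is part of (i). For (iii), substituting $t=q$ into \eqref{eq5.R_epsm} makes every denominator factor coincide with its numerator counterpart, so $R_{\eps,m}(Z;q,q)\equiv 1$. For (i) and (ii), the key point is that $f$ is a product of $(a(t);q)_\infty$ factors with each $a(t)$ analytic in $t$, so $f$ is meromorphic. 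For generic $t$ (satisfying \eqref{eq5.G}), Corollary \ref{cor5.A} yields both regularity and a nonzero value. At exceptional $t$ satisfying an accidental relation $t^e=q^d$, one uses the natural pairing between the two large products in \eqref{eq5.R_epsm}: the factor $(y_r t/x_s;q)_\infty$ from the first product pairs with $(x_{r+\eps(r)}t/x_s;q)_\infty$ from the second product (their arguments differ by the integer $q$-power $q^{m_r}$), and analogously for the $q$-factors. The identity $(a;q)_\infty/(aq^{m};q)_\infty=(a;q)_m$ converts each paired ratio into a finite Pochhammer, and under $X$-admissibility the resulting finite Pochhammers remain nonzero at every $t\in(0,1)$. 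Hence all accidental poles and zeros of $f$ cancel, and $f$ is regular and nowhere zero on $(0,1)$.

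\textbf{Main obstacle.} The combinatorial heart is verifying the matched cancellation at exceptional $t$, and in particular the nonvanishing of the finite Pochhammers $(x_{r+\eps(r)}t/x_s;q)_{m_r}$ and $(x_{r+\eps(r)}q/x_s;q)_{m_r}$ under $X$-admissibility. For each candidate coincidence $t^e=q^d$, one must track how the gap sequence $\{l_r\}$ of $X$ interacts with the admissibility bound $m_r\le l_r$ and the relative position of indices with respect to the pivot $k$; a case split appears unavoidable. Once this is in place, positivity of $R_{\eps,m}(X;q,t)$ falls out automatically from the $t=q$ evaluation, and the explicit sign computation in $R^{(1)}$ is merely a consistency check.
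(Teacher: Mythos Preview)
Your approach is genuinely different from the paper's, and the $t=q$ anchor is a nice idea. The paper proceeds by induction on $N$: it peels off $x_1$ (when $k\ge2$) or $x_N$ (when $N-k\ge2$) via recurrence relations for $R_{\eps,m}$, checks directly that each split-off factor is strictly positive by a sign count (in the key product $\prod_s (x_1 t/x_s;q)_{m_1}/(x_1 q/x_s;q)_{m_1}$ every individual factor is strictly negative, and numerator and denominator have equally many), and reduces to the base case $N=2$, $k=1$. Your global pairing of the two big products in \eqref{eq5.R_epsm} is in fact equivalent to deriving the paper's closed formula \eqref{eq5.R}; once that formula is in hand, both routes are available---either the paper's direct sign count, or your continuity-in-$t$ argument pinned at $t=q$.

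That said, two points need fixing. First, your appeal to Corollary~\ref{cor5.A} for ``a nonzero value'' at generic $t$ is not justified: that corollary gives only nonsingularity, not nonvanishing, so the nonvanishing of $f(t)$ for all $t$ (which your connectedness argument needs) must be established from scratch. Second---and this is the real work you flag as the ``main obstacle''---you must check that the finite Pochhammers $(x_{r+\eps(r)}t/x_s;q)_{m_r}$ and $(x_{r+\eps(r)}q/x_s;q)_{m_r}$ are nonzero for \emph{every} $t\in(0,1)$, not just generic $t$. This hinges on the admissibility bound $m_r\le l_r$ together with the fact that any accidental relation $t^e=q^d$ with $e\ge1$ and $t,q\in(0,1)$ forces $d\ge1$; combined with $l_r+\dots+l_{s-1}\ge l_r\ge m_r$, this places the argument outside the zero set $\{1,q^{-1},\dots,q^{-(m_r-1)}\}$. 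This check is doable but is essentially the same combinatorics the paper handles via its sign count, and arguably the paper's route is cleaner because it reads off positivity directly rather than first proving nonvanishing and then importing the sign from $t=q$. (Also note that $X=X(t)$ varies with $t$ through the fixed double signature; your $f$ should be understood as $t\mapsto R_{\eps,m}(X(t);q,t)$, which is fine but worth stating.)
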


\begin{proof}
Suppose $k\ge2$, so that $0<x_2<x_1$. Then from the proof of  Lemma \ref{lemma5.D} (formula  \eqref{eq5.H} and the argument after it) we obtain the recurrence relation  
\begin{multline}\label{eq5.recurr1}
R_{\eps_1,\dots,\eps_{N-1}; m_1,\dots,m_N}(x_1,\dots,x_N;q,t)\\
 =\frac{(q^{m_1+1};q)_\infty}{(q^{m_1}t;q)_\infty}\cdot  \prod_{s=2}^N \frac{(x_1 t/x_s;q)_{m_1}}{(x_1 q/x_s; q)_{m_1}}\cdot \prod_{j=2}^N \frac{(x_j t/x_1; q)_\infty}{(x_j q/x_1; q)_\infty}\\
 \times R_{\eps_2,\dots,\eps_{N-1}; m_2,\dots,m_N}(x_2,\dots,x_N;q,t),
\end{multline}
with the understanding that the expression on the last line equals $1$ if $N=2$. 

We claim that the expression on the middle line is strictly positive, for any $t\in(0,1)$. Indeed, in the product over $j$, all factors are strictly positive. Next, write the product over $s$ in a more detailed way:
$$
\prod_{s=2}^N \frac{(x_1 t/x_s;q)_{m_1}}{(x_1 q/x_s; q)_{m_1}}=\prod_{s=2}^N \frac{(1-x_1 t/x_s)\dots(1-x_1tq^{m_1-1}/x_s)}{(1-x_1 q/x_s)\dots(1-x_1q^{m_1}/x_s)}.
$$
Since $x_s\le x_1tq^{l_1}$ and $m_1\le l_1$, we see that all factors are strictly negative, so that the whole expression is strictly positive. 

Next, suppose $N-k\ge2$, so that $x_N<x_{N-1}<0$. Then we obtain a similar recurrence relation,
\begin{multline}\label{eq5.recurr2}
R_{\eps_1,\dots,\eps_{N-1}; m_1,\dots,m_N}(x_1,\dots,x_N;q,t)\\
 =\frac{(q^{m_N+1};q)_\infty}{(q^{m_N}t;q)_\infty}\cdot  \prod_{s=1}^{N-1} \frac{(x_N t/x_s;q)_{m_N}}{(x_N q/x_s; q)_{m_N}}\cdot \prod_{j=1}^{N-1} \frac{(x_j t/x_N; q)_\infty}{(x_j q/x_N; q)_\infty}\\
 \times R_{\eps_1,\dots,\eps_{N-2}; m_1,\dots,m_{N-1}}(x_1,\dots,x_{N-1};q,t).
\end{multline}
The same argument shows that the expression on the middle line is strictly positive. 

Using these two recurrence relations we reduce the problem to the case when $N=2$ and $k=1$, meaning $x_2<0<x_1$.  Then we have 
\begin{equation}\label{eq5.N=2(1)}
R_{\eps_1; m_1}(x_1,x_2;q,t)=\dfrac{(q^{m_1+1};q)_\infty}{(q^{m_1}t;q)_\infty}\dfrac{(x_1t/x_2;q)_{m_1}(x_2t/x_1;q)_\infty}{(x_1q/x_2;q)_{m_1}(x_2q/x_1;q)_\infty}, \qquad \eps(1)=0,
\end{equation}
and 
\begin{equation}\label{eq5.N=2(2)}
R_{\eps_1; m_1}(x_1,x_2;q,t)=\dfrac{(q^{m_1+1};q)_\infty}{(q^{m_1}t;q)_\infty}\dfrac{(x_2t/x_1;q)_{m_1}(x_1t/x_2;q)_\infty}{(x_2q/x_1;q)_{m_1}(x_1q/x_2;q)_\infty}, \qquad \eps(1)=1.
\end{equation}
In both variants, all the factors are strictly positive for any $t\in(0,1)$, because $x_2<0<x_1$. 
\end{proof}

\subsection{The matrices $\LaN$}\label{sect5.10}

\begin{definition}\label{def5.D}
For each $N=2,3,\dots$ we define a matrix $\LaN$ of format $\Om_N\times\Om_{N-1}$ as follows. 

$\bullet$ If $Y$ and $X$ do not interlace, then $\LaN(X,Y):=0$. 

$\bullet$. If $Y\prec X$, then we take the $X$-admissible pair $(\eps,m)$ corresponding to $Y$ (see Lemma \ref{lemma5.F}) and set
\begin{equation}\label{eq5.L}
\LaN(X,Y):=\frac{((t;q)_\infty)^N}{(t^N;q)_\infty((q;q)_\infty)^{N-1}}\cdot\frac{V(Y)}{V(X)}\prod_{i=1}^{N-1} |y_i|\cdot  R_{\eps,m}(X;q,t).
\end{equation}
\end{definition}

\smallskip

The definition makes sense, because we know from Lemma \ref{lemma5.E} that the meromorphic function $R_{\eps,m}(Z;q,t)$ is nonsingular at $Z=X$. Note also that the first fraction on the right-hand side is the constant $\wt C_N(q,t)$ given by \eqref{eq5.K}. 

To make the definition  \eqref{eq5.L} explicit we have to exhibit an explicit expression for $R_{\eps,m}(X;q,t)$. Here are a few ways to do that. 

\smallskip

(1) From the proof of Lemma \ref{lemma5.E} one can deduce the formula 
\begin{multline}\label{eq5.R}
R_{\eps;m}(X;q,t)=\prod_{r=1}^{N-1}\frac{(q^{m_r+1};q)_\infty}{(q^{m_r}t;q)_\infty} 
\cdot\prod_{r=1}^{N-1} \prod_{\substack{s=1,\dots,N\\ s\ne r+\eps(r)}}\frac{(x_{r+\eps(r)}t/x_s;q)_{m_r}}{(x_{r+\eps(r)}q/x_s;q)_{m_r}}\\
\times \prod_{\substack{j=1,\dots,N\\ j\ne k+1-\eps(k)}}\frac{(x_{k+1-\eps(k)}t/x_j;q)_\infty}{(x_{k+1-\eps(k))}q/x_j;q)_\infty}.
\end{multline}
Recall that $\eps(r)=0$ for $1<r<k$, $\eps(r)=1$ for $k<r<N$, while for $r=k$ with $0< k<N$, both values $0$ and $1$ are admitted. Note also that the last product in \eqref{eq5.R} should be removed when $k=0,N$. 

This formula can be checked directly as follows. It suffices to show that it agrees with the recurrence relations \eqref{eq5.recurr1}, \eqref{eq5.recurr2}, and with formulas \eqref{eq5.N=2(1)}, \eqref{eq5.N=2(2)}.

When we split off $x_1$, the remaining variables are renamed and $k$ is replaced by $k-1$. From this it is seen that \eqref{eq5.R} agrees with \eqref{eq5.recurr1}.

When we split off $x_N$, the enumeration does not change and $k$ remains intact. This agrees with \eqref{eq5.recurr2}.

Examine now the case when $N=2$ and $k=1$, meaning that $x_2<0<x_1$. There are two variants, $\eps(1)=0$ and $\eps(1)=1$. In both variants, agreement with \eqref{eq5.N=2(1)} and \eqref{eq5.N=2(2)} is seen directly. 

(2)  An alternative formula is obtained by specializing directly $Z:=X$, $\wt Z:=Y$ into \eqref{eq5.R_epsm}:
\begin{equation}\label{eq5.LaN}
R_{\eps;m}(X;q,t)= \prod_{r=1}^{N-1}\prod_{s=1}^N\frac{(y_rq/x_s;q)_\infty}{(y_r t/x_s;q)_\infty} \prod_{1\le i\ne j\le N}\frac{(x_it/x_j;q)_\infty}{(x_iq/x_j;q)_\infty}
\end{equation}
However, in this formula we need to impose the constraint \eqref{eq5.G} to guarantee that the factors in the denominators do not vanish. 

\smallskip

(3) Next, one can get rid of the constraint \eqref{eq5.G} in \eqref{eq5.LaN} by the following transformation of the right-hand side. The problem consists in possible vanishing of the factors $(y_r t/x_s;q)_\infty$ (where $y_r$ and $x_s$ are of the same sign and such that $|y_rt|\ge|x_s|$), as well as of the factors $(x_iq/x_j;q)_\infty$ (where $x_i$ and $x_j$ are of the same sign and such that $|x_iq|\ge|x_j|$). In \eqref{eq5.R}, that problem was resolved due to cancellations. Here is the idea of another solution.

Let, for definiteness, $1\le r<s<k$, so that $x_r\ge y_r>x_s>0$. Using the fact that $y_r\in x_r q^\Z$, one can show that 
\begin{equation}\label{eq5.psi}
\frac{(y_rq/x_s;q)_\infty(x_rt/x_s;q)_\infty}{(y_rt/x_s;q)_\infty(x_rq/x_s;q)_\infty}
=\left(\frac{y_r}{x_r}\right)^{\tau-1} 
\dfrac{(x_sq/(y_rt);q)_\infty (x_s/x_r;q)_\infty}{(x_s/y_r;q)_\infty(x_sq/(x_rt);q)_\infty}.
\end{equation}
Observe that on the right-hand side of \eqref{eq5.psi}, the factors in the denominator already do not vanish.

This trick makes it possible to transform \eqref{eq5.LaN} to the form which does not require the constraint \eqref{eq5.G}.

\subsection{Completion of proof of Theorem A}

As explained in section \ref{sect5.2}, for the proof of Theorem A it suffices to prove the identity \eqref{eq5.F} linking the matrices $\LaN$ with Macdonald polynomials. 

Suppose first that $t$ satisfies the constraint \eqref{eq5.G} 
By virtue of Lemma \ref{lemma5.A}, Okounkov's formula \eqref{eq3.B} can be written in the form
\begin{multline*}
\frac{\wt C_N(q,t)}{V(Z)}\sum_{\eps\in\{0,1\}^{N-1}}\sum_{m\in\Z_{\ge0}^{N-1}}V(\wt Z)  R_{\eps,m}(Z;q,t)  \prod\limits_{i=1}^{N-1} (-1)^{\eps(i)}\wt z_i\\ 
\times \frac{P_{\nu\mid N-1}(\wt z_1,\dots,\wt z_{N-1})}{(t^{N-1};q,t)_\nu}
=\frac{P_{\nu\mid N}(z_1,\dots,z_{N-1})}{(t^N;q,t)_\nu}
\end{multline*}
Now we fix an arbitrary $X\in\Om_N$ and let $Z$ approach $X$ in the way described in Definition \ref{def5.C}. By Lemma \ref{lemma5.D}, in this limit regime, $ R_{\eps,m}(Z;q,t)$ has a limit for each $(\eps,m)$, but the result vanishes unless $(\eps,m)$ is $X$-admissible. Therefore, in the limit,  we obtain on the left a sum over the $X$-admissible pairs $(\eps,m)$, which can be interpreted as a sum over the configurations $Y\prec X$. Note that, given an $X$-admissible pair $(\eps, m)$, we have $\wt Z\to Y$ and $(-1)^{\eps(i)}$ is the sign of $y_i$, so that $(-1)^{\eps(i)}\wt z_i\to |y_i|$. Taking into account Definition \ref{def5.D} we see that in the limit, Okounkov's formula turns into \eqref{eq5.F}, as desired. 

After that we may remove the constraint on $t$ by continuity, because the resulting formula makes sense for any $t\in(0,1)$ due to the results of section \ref{sect5.9}.

This completes the proof of Theorem A.  

\subsection{Remarks}

\subsubsection{The Dixon--Anderson kernel} 

Let $\Conf_N(\R)$ denote the set of $N$-point configurations on $\R$: an element of $\Conf_N(\R)$ is an $N$-tuple $\XX=(x_1>\dots>x_N)$ of real numbers. If $\XX\in\Conf_N(\R)$ and $\YY\in\Conf_{N-1}(\R)$, then we write $\XX\succ \YY$ or $\YY\prec\XX$ if $x_i>y_i>x_{i+1}$ for $i=1,\dots,N-1$. 

Given $\XX\in\Conf_N(\R)$ with $N\ge2$, the following formula defines a probability measure $\LL^N_{N-1}(\XX, d\YY)$ on the domain $\{\YY: \YY\prec\XX\}\subset\Conf_{N-1}(\R)$:
\begin{equation}\label{eq5.DA}
\LL^N_{N-1}(\XX, d\YY)=\frac{\Ga(N\tau)}{(\Ga(\tau))^N} \, \frac{V(\YY)}{(V(\XX))^{2\tau-1}} \, (V(\XX;\YY))^{\tau-1} d\YY,
\end{equation}
where $\tau>0$ is a parameter and 
$$
V(\XX):=\prod_{1\le i<j\le N}(x_i-x_j), \quad V(\YY):=\prod_{1\le r<s\le N-1}(y_r-y_s), \quad V(\XX;\YY):=\prod_{i=1}^N \prod_{r=1}^{N-1}|x_i-y_r|.
$$

The fact that $\LL^N_{N-1}(X,\ccdot)$ is indeed a probability measure is equivalent to the evaluation of what is called a Dixon--Anderson integral (Forrester--Warnaar \cite[sect. 2.1]{FW-2008}).  Thus, $\LL^N_{N-1}$ is a Markov kernel from $\Conf_N(\R)$ to $\Conf_{N-1}(\R)$; let us call it the \emph{Dixon--Anderson kernel}. 

For more about it, see Assiotis--Najnudel \cite{AN}. 

\subsubsection{Degeneration $\LaN\to\LL^N_{N-1}$}

Fix $\tau>0$ and suppose $t=q^\tau$, as usual. One can show that, as $q\to1$, the stochastic matrices $\LaN$ (Definition \ref{def5.D}) converge to the Dixon--Anderson kernels $\LL^N_{N-1}$. 

In the case $\tau\in\Z_{\ge1}$ the proof is easy: one can use the simple formula \eqref{eq4.B}. For arbitrary $\tau>0$ one has to deal with the more sophisticated definition \eqref{eq5.L} and the expression \eqref{eq5.LaN} (or rather its transformation described in item (3) of subsection \ref{sect5.10}). Then the proof relies on the asymptotic formula 
$$
\lim_{q\to1}\frac{(u q^A;q)_\infty}{(u q^B;q)_\infty}=(1-u)^{B-A},
$$
which is valid on the domain $\C\setminus[1,+\infty)$ (Andrews--Askey--Roy \cite[Theorem 10.2.4]{AAR}, Gasper--Rahman \cite[ch. 1, (3.19)]{GR}). 

\subsubsection{Continuous analogue of Theorem A}

The following identity is a continuous analogue of the coherency relation \eqref{eq5.F}:
\begin{equation}\label{eq5.F1}
\int_{\YY\prec\XX}\LL^N_{N-1}(\XX,d\YY) \frac{P_{\nu\mid N-1}(\YY;\tau)}{(N-1;\tau)_\nu}=\frac{P_{\nu\mid N}(\XX;\tau)}{(N;\tau)_\nu},
\end{equation}
where $X\in\Conf_N(\R)$, $\nu\in\Y(N-1)$, 
$$
(u;\tau)_\nu:=\prod_{(i,j)\in\nu}((u+1-i)\tau+j-i),
$$
and the polynomials on the left and on the right are the Jack polynomials with parameter $\tau$, in $N-1$ and $N$ variables, respectively.  

This formula appeared in \cite[sect. 6]{OO-1997}. Note that it is different from \eqref{eq1.E}.

\section{Extended stochastic matrices $\wt\La^N_{N-1}$}\label{sect6}

This section serves as a preparation to the proof of Theorem B. In that proof we use a compactness argument, for which we need to deal with some larger sets of configurations $\wt\Om_N\supset\Om_N$ equipped with a non-discrete  topology. We introduce certain matrices $\wt\La^N_{N-1}$ of format $\wt\Om_N\times\wt\Om_{N-1}$ that extend the matrices $\LaN$, and we establish a technical result --- a property of $\wt\La^N_{N-1}$ stated as Theorem \ref{thm6.A}. It is used in the sequel, in sections \ref{sect7.3} and \ref{sect8.2}.

\subsection{Preliminaries}\label{sect6.1}

Theorem B was formulated in terms of double signatures, but it is more convenient to deal directly with point configurations, as in section \ref{sect5}. The definitions formulated below are a direct extension of those given in the author's paper \cite[\S6]{Ols-2016}. The reader is referred to that paper for more details.

\begin{definition}\label{def6.A}
Recall that an \emph{infinite signature} is an infinite sequence of non increasing integers $a=(a_1\ge a_2\ge\dots)$.

(i) By definition, the set $\Om_\infty$ consists of the configurations in $\R^*$ of the form
$$
X(a^+,a^-):=\{\zeta_+ q^{-a^+_i}t^{i-1}\}\cup\{\zeta_- q^{-a^-_j}t^{j-1}\},
$$
where $a^+$ and $a^-$ are two signatures, and at least one of them is infinite. 

(ii) Next, $\wt\Om$ is defined as the union of the sets $\Om_0:=\{\varnothing\}$, $\Om_1$, $\Om_2$, \dots, and $\Om_\infty$.
\end{definition}

Thus, elements of the space $\wt\Om$ are certain configurations in $\R^*$ which may be finite or infinite (in the case $t=q$ the space $\wt\Om$ coincides with the set $\wt{\mathbb G}_\infty$ from \cite{Ols-2016}).  We equip $\wt\Om$ with a structure of uniform space by proclaiming two configurations $X,X'\in\wt\Om$ to be \emph{$\eps$-close} (where $\eps>0$ is small)  if they coincide outside the interval $(-\eps,\eps)$. In particular, this makes $\wt\Om$ a topological space. As such, it is locally compact and metrizable. 

Both $\Om_\infty$ and $\bigcup_{N=0}^\infty\Om_N$ are dense subsets of $\wt\Om$. 

For each $N=1,2,\dots$, we denote by $\wt\Om_N$ the closure of $\Om_N$ in $\wt\Om$; it is the union of the sets $\Om_0,\dots,\Om_N$.

\subsection{Construction of matrices $\wt\La^N_{N-1}$}

Below the symbol $\P(\ccdot)$ denotes the space of probability Borel measures on a given topological space. 

\begin{proposition}\label{prop6.A}
Fix $N=2,3,\dots$ and let us interpret the matrix $\LaN$ as a map $\Om_N\to\P(\Om_{N-1})$. As such, it can be uniquely extended to a map $\wt{\La}^N_{N-1}: \wt\Om_N\to \P(\wt\Om_{N-1})$, which is continuous with respect to the topology on $\wt\Om_N$ inherited from $\wt\Om$ and the weak topology on $\P(\wt\Om_{N-1})$.  
\end{proposition}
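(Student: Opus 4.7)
The proposition asserts both existence and uniqueness of a continuous extension, and the plan is to establish them by a compactness-plus-moment-problem argument. Uniqueness is automatic: $\Om_N$ is dense in $\wt\Om_N$ (by the very definition of $\wt\Om_N$ as its closure), so a continuous map into the Hausdorff space $\P(\wt\Om_{N-1})$ is determined by its restriction to $\Om_N$. For existence, the strategy is to show that whenever $X^{(n)}\to X$ in $\wt\Om_N$ with $X^{(n)}\in\Om_N$, the measures $\La^N_{N-1}(X^{(n)},\ccdot)$ converge weakly in $\P(\wt\Om_{N-1})$ to a limit depending only on $X$; then $\wt\La^N_{N-1}(X,\ccdot)$ is defined as this common limit.

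The first step is tightness. Fix $A>0$ with $X\subset[-A,A]$. In the topology of $\wt\Om$, the coordinates of $X^{(n)}$ outside a neighborhood of $0$ eventually coincide with those of $X$, so $X^{(n)}\subset[-A,A]$ for large $n$. A case-by-case inspection of Lemma \ref{lemma5.interlace} shows that every $Y\prec X^{(n)}$ satisfies $Y\subset[\min X^{(n)},\max X^{(n)}]\subset[-A,A]$; in particular, in the delicate case $x_{k+1}<0<x_k$, the bound $|y_k|<\max(|x_k|,|x_{k+1}|)$ follows from $y_k\in\{x_kq^{m_k},x_{k+1}q^{m_k}\}$ and $q<1$. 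Consequently each measure $\La^N_{N-1}(X^{(n)},\ccdot)$ is supported on the compact set $\wt\Om_{N-1}[-A,A]$ (compactness noted in the proof of Lemma \ref{lemma5.uni}), and by Prokhorov's theorem the family is relatively compact in the weak topology on $\P(\wt\Om_{N-1})$.

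The second step is to identify all subsequential weak limits via the coherency relation \eqref{eq5.F}. For each $\nu\in\Y(N-1)$,
\[
\sum_{Y\in\Om_{N-1}}\La^N_{N-1}(X^{(n)},Y)\,\frac{P_{\nu\mid N-1}(Y;q,t)}{(t^{N-1};q,t)_\nu}=\frac{P_{\nu\mid N}(X^{(n)};q,t)}{(t^N;q,t)_\nu}.
\]
The right-hand side is continuous in $X^{(n)}\in\wt\Om_N$ (using the padding-by-zeros convention of section \ref{sect5.3}), so it converges to $P_{\nu\mid N}(X;q,t)/(t^N;q,t)_\nu$. On the left, $P_{\nu\mid N-1}(\ccdot;q,t)$ is a continuous bounded function on the compact space $\wt\Om_{N-1}[-A,A]$, so weak convergence transports the pairing to the limit. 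Hence every subsequential weak limit $\mu$ satisfies
\[
\int_{\wt\Om_{N-1}[-A,A]} P_{\nu\mid N-1}(Y;q,t)\,d\mu(Y)=\frac{(t^{N-1};q,t)_\nu}{(t^N;q,t)_\nu}P_{\nu\mid N}(X;q,t), \qquad \nu\in\Y(N-1),
\]
and Lemma \ref{lemma5.uni} forces $\mu$ to be unique. Therefore the whole sequence converges, and the limit depends only on $X$ (not on the approximating sequence).

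Continuity of $\wt\La^N_{N-1}$ on all of $\wt\Om_N$ then follows by the same scheme applied to arbitrary sequences $X^{(n)}\to X$ in $\wt\Om_N$: the tightness argument and the characterization of the limit via pairings with Macdonald polynomials go through verbatim, once the identity above is known to hold for the extension (which it does by construction). The main obstacle I anticipate is the tightness input --- specifically, the inclusion $Y\prec X^{(n)}\Rightarrow Y\subset[\min X^{(n)},\max X^{(n)}]$ --- since without it the family of measures could escape to infinity along approximating sequences; the argument sketched above via Lemma \ref{lemma5.interlace} is what secures the compact support needed for Prokhorov and for the Stone--Weierstrass-style uniqueness in Lemma \ref{lemma5.uni}.
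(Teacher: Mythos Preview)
Your proposal is correct and follows essentially the same approach as the paper: the paper's proof is a reference to \cite[Proposition 4.2]{Ols-2016}, whose argument is precisely the compactness-plus-coherency-relation scheme you spell out --- tightness from the interlacement constraint $Y\subset[X]$, relative compactness on the compact set $\wt\Om_{N-1}[-A,A]$, and identification of the unique weak limit via Lemma~\ref{lemma5.uni} applied to the coherency relation \eqref{eq5.F}. Your final paragraph handling continuity on all of $\wt\Om_N$ (rather than just along sequences from $\Om_N$) is a bit terse but the logic is sound: once the extension is defined and shown to satisfy the extended coherency relation, the same tightness and uniqueness argument applies verbatim to arbitrary convergent sequences in the metrizable space $\wt\Om_N$.
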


\begin{proof}
The argument is exactly the same as in the proof of \cite[Proposition 4.2]{Ols-2016}. The key ingredient is the coherency relation \eqref{eq5.F} for Macdonald polynomials, which is a generalization of a similar relation for the Schur polynomials from \cite[Proposition 2.4]{Ols-2016}. 
\end{proof}

In the theorem below we use the fact that for each $N$, the $N$-variate Macdonald polynomials can be extended from $\Om_N$ to the ambient space $\wt\Om_N$ by continuity. This is equivalent to saying that the values on the subset $\Om_n\subset \wt\Om_N$ with $n<N$ are obtained by adding extra $N-n$ zeroes as variables.

\begin{theorem}\label{thm6.B}
Let $N=2,3,\dots$, $\nu\in\Y(N-1)$, and $X\in\wt\Om_N$. Then 
\begin{equation}\label{eq6.A}
\sum_{Y\in\wt\Om_{N-1}}\wt{\La}^N_{N-1}(X,Y)\frac{P_{\nu\mid N-1}(Y;q,t)}{(t^{N-1};q,t)_\nu}=\frac{P_{\nu\mid N}(X;q,t)}{(t^N;q,t)_\nu}.
\end{equation}
\end{theorem}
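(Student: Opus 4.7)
The plan is to deduce \eqref{eq6.A} from its finite-dimensional version \eqref{eq5.F} by continuous extension, using that $\Om_N$ is dense in $\wt\Om_N$. Given $X\in\wt\Om_N$, I would choose a sequence $X^{(k)}\in\Om_N$ with $X^{(k)}\to X$, apply \eqref{eq5.F} to each $X^{(k)}$, and pass to the limit $k\to\infty$. The right-hand side of \eqref{eq6.A} evaluated at $X^{(k)}$ tends to the corresponding expression at $X$, because $N$-variate Macdonald polynomials are ordinary polynomials, hence continuous on $\R^N$; the resulting function of a configuration—with trailing zero coordinates inserted when the cardinality drops below $N$—is then continuous in the topology of $\wt\Om_N$ (two $\eps$-close configurations differ only by points accumulating near $0$, at which the polynomial is continuous).

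For the left-hand side, Proposition \ref{prop6.A} asserts that $\La^N_{N-1}(X^{(k)},\ccdot)\to\wt\La^N_{N-1}(X,\ccdot)$ weakly in $\P(\wt\Om_{N-1})$. The main obstacle is that the test function $f(Y):=P_{\nu\mid N-1}(Y;q,t)/(t^{N-1};q,t)_\nu$ is continuous on $\wt\Om_{N-1}$ but not bounded, so weak convergence alone does not deliver convergence of the pairings $\langle\mu_k,f\rangle$. To overcome this I would establish a uniform support bound: by the interlacement description in Lemma \ref{lemma5.interlace}, every $Y$ with $\La^N_{N-1}(X^{(k)},Y)>0$ satisfies $|y_r|\le\max_j|x_j^{(k)}|$ for all $r$. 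Since $X^{(k)}\to X$ in the $\eps$-closeness topology, the only coordinates of $X^{(k)}$ that can vary substantially with $k$ are those approaching $0$, so $\max_j|x_j^{(k)}|$ remains uniformly bounded by some constant $A$ for $k$ large. Consequently the measures $\La^N_{N-1}(X^{(k)},\ccdot)$, as well as the limit measure $\wt\La^N_{N-1}(X,\ccdot)$, are all supported in
\[
K_A:=\{Y\in\wt\Om_{N-1}:Y\subset[-A,A]\},
\]
which is closed in $\wt\Om_{N-1}$ and compact by the local compactness of $\wt\Om$ noted after Definition \ref{def6.A}.

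Once the uniform compact-support statement is in hand the rest is routine. The restriction of $f$ to $K_A$ is continuous and bounded, so using a compactly supported cutoff I would extend it to a bounded continuous function $\tilde f$ on $\wt\Om_{N-1}$ agreeing with $f$ on $K_A$. Then $\langle\La^N_{N-1}(X^{(k)},\ccdot),f\rangle=\langle\La^N_{N-1}(X^{(k)},\ccdot),\tilde f\rangle$, and the weak convergence yields convergence to $\langle\wt\La^N_{N-1}(X,\ccdot),\tilde f\rangle=\langle\wt\La^N_{N-1}(X,\ccdot),f\rangle$. Combining this with the limit of the right-hand side gives \eqref{eq6.A}. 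The hard part is the uniform support bound—confirming that $K_A$ is genuinely compact in $\wt\Om_{N-1}$ and that the interlacement really forces the support of $\La^N_{N-1}(X^{(k)},\ccdot)$ to lie in it; once that is in place, the passage to the limit is a standard portmanteau-type argument.
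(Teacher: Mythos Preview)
Your proposal is correct and takes essentially the same approach as the paper. The paper's own proof is a single sentence (``This follows from \eqref{eq5.F} and the definition of the extended matrices''), because the continuity argument you spell out---uniform support in the compact set $\wt\Om_{N-1}[-A,A]$ via interlacement, and Stone--Weierstrass density of polynomials there---is exactly what underlies the construction of $\wt\La^N_{N-1}$ in Proposition~\ref{prop6.A} (whose proof is imported from \cite[Proposition~4.2]{Ols-2016}); once that proposition is in hand, \eqref{eq6.A} is just the limiting form of \eqref{eq5.F} used to characterize the extension, so nothing further is needed. Your identification of the ``hard part'' is apt but both ingredients are already recorded in the paper: the compactness of $\wt\Om_{K}[-A,A]$ is stated in the proof of Lemma~\ref{lemma5.uni}, and the bound $x_{r+1}\le y_r\le x_r$ is noted right after Lemma~\ref{lemma5.interlace}.
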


\begin{proof}
This follows from  \eqref{eq5.F}) and the definition of the extended matrices. 
\end{proof}

\subsection{The support of $\wt\La^N_{N-1}(X^*, \ccdot)$: statement of the result}

Let $n<N$. We are going to define a modified interlacement relation, denoted as $Y^*\prec\!\!\prec X^*$, between configurations $X^*\in\Om_n\subset\wt\Om_N$ and $Y^*\in\Om_n\subset\wt\Om_{N-1}$. Introduce a notation: 
\begin{equation}\label{eq6.B}
k(X^*):=\#\{x^*\in X^*: x^*>0\}, \quad l(X^*):=\#\{x^*\in X^*: x^*<0\}.
\end{equation} 
We will often abbreviate and write $k=k(X^*)$, $l=l(X^*)$. Evidently, $k+l=n$. 

\begin{definition}\label{def6.B}
We write $Y^*\prec\!\!\prec X^*$ if $Y^*\prec (X^*\cup\{x^0\})$, where $x^0$ is an arbitrary point in $\zeta_+ q^\Z t^k\sqcup \zeta_- q^{\Z}t^l$ sufficiently close to zero. Note that $X^*\cup\{x^0\}\in \wt\Om_N$ and the choice of $x^0$ does not matter (provided it is close to $0$). 
\end{definition}

Equivalently, writing $X^*=(x^*_1>\dots>x^*_n)$ and $Y^*=(y^*_1>\dots>y^*_n)$, the relation $Y^*\prec\!\!\prec X^*$ means that
\begin{equation*}
\begin{gathered}
x^*_{i+1}t^{-1}\le y^*_i\le x^*_i \quad \text{for} \quad 1\le i\le k-1,\\
x^*_j\le y^*_j\le x^*_{j-1}t^{-1}\quad \text{for} \quad  k+2\le j\le n,\\
0<y^*_k\le x^*_k,  \qquad x^*_{k+1}\le y^*_{k+1}<0,
\end{gathered}
\end{equation*}
with the understanding that if $k=0$ or $l=0$, then some of the above inequalities disappear. 

For instance, if $n=3$ and $X^*=(x^*_1,x^*_2,x^*_3)$, where $x^*_1>x^*_2>0>x^*_3$, then $k=2$, $l=1$, and $Y^*\prec\!\!\prec X^*$ means that $Y^*=(y^*_1,y^*_2,y^*_3)$, where 
\begin{gather*}
y^*_1\in\zeta_+ q^{\Z}, \quad x^*_1\ge y^*_1\ge x^*_2 t^{-1},\\
y^*_2\in \zeta_+ q^{\Z}t, \quad x^*_2\ge y^*_2>0,\\
y^*_3\in \zeta_-q^{\Z}, \quad 0>y^*_3\ge x^*_3.
\end{gather*}

\begin{theorem}\label{thm6.A}
Let $X^*\in\Om_n\subset\wt\Om_N$, where $n<N$.  Then the measure  $\wt\La^N_{N-1}(X^*, \ccdot)$ is concentrated on the set $\{Y^*\in\Om_n: Y^*\prec\!\!\prec X^*\}\subset\wt\Om_{N-1}$.
\end{theorem}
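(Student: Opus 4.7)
The plan is to approximate $X^*$ by a sequence from $\Om_N$, use the continuity of $\wt\La^N_{N-1}$ (Proposition~\ref{prop6.A}) to get weak convergence of measures, and identify the limit through the uniqueness in Lemma~\ref{lemma5.uni}. Write $X^*=(x^*_1>\dots>x^*_n)$ with $k=k(X^*)$ positive and $l=l(X^*)$ negative points, and fix any split $k'+l'=N-n$. For each $\rho\ge1$ I will build $X^{(\rho)}\in\Om_N$ by adjoining the tightly packed extras $\tilde x^+_i:=\zeta_+q^\rho t^{k+i-1}$ ($i=1,\dots,k'$) and $\tilde x^-_j:=\zeta_- q^\rho t^{l+j-1}$ ($j=1,\dots,l'$). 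By Lemma~\ref{lemma5.Omega} these extras fit with $X^*$ into an element of $\Om_N$ at the minimum $t$-gap, so $X^{(\rho)}\to X^*$ in $\wt\Om$ as $\rho\to\infty$, and Proposition~\ref{prop6.A} gives $\La^N_{N-1}(X^{(\rho)},\ccdot)\to\wt\La^N_{N-1}(X^*,\ccdot)$ weakly on bounded subsets.

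Each approximating measure sits on $\{Y\in\Om_{N-1}:Y\prec X^{(\rho)}\}$. I will partition the $N-1$ slots of $Y$ into \emph{macroscopic} slots (between consecutive big $x^*$-coordinates on the same side; the transition $x^*_k\to\tilde x^+_1$ and its negative counterpart; and, when both $k',l'>0$, the sign-changing slot $\tilde x^+_{k'}\to\tilde x^-_1$) and \emph{microscopic} slots (strictly between two consecutive same-sign extras). Under the tight packing $\tilde x^+_{i+1}=\tilde x^+_i t$, Lemma~\ref{lemma5.interlace} yields $l_r=0$ in each microscopic slot, so its $y$-coordinate is uniquely forced to the boundary of the extras. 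Consequently, every $Y^*\pprec X^*$ has a unique \emph{natural lift} $Y^{(\rho)}\in\Om_{N-1}$ whose macroscopic coordinates reproduce $Y^*$ and whose microscopic ones collapse to $0$ with $\rho$. On the macroscopic slots, the constraint $Y\prec X^{(\rho)}$ passes in the limit to exactly the condition of Definition~\ref{def6.B}, with the shadow point $x^0$ appearing as the limit location of the innermost extra.

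Define $W(Y^*):=\lim_{\rho\to\infty}\La^N_{N-1}(X^{(\rho)},Y^{(\rho)})$ for each natural lift. Using formulas \eqref{eq5.L}--\eqref{eq5.R} and the asymptotic $(q^{-A};q)_m\sim(-1)^m q^{-Am+\binom{m}{2}}$ as $A\to+\infty$, the potentially divergent $q$-factors inside $V(X^{(\rho)})^{-1}$ and $R_{\eps,m}(X^{(\rho)};q,t)$ cancel with one another and with the factor $V(Y^{(\rho)})\prod|y^{(\rho)}_i|$, yielding a finite strictly positive number. To compare $W$ with $\wt\La^N_{N-1}(X^*,\ccdot)$ I will pass the level-$N$ coherency relation of Theorem~A applied at $X^{(\rho)}$ to the limit. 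Weak convergence and the continuity of Macdonald polynomials on the compact set $\wt\Om_{N-1}[-A,A]$ give $\int P_{\nu\mid N-1}\,d\La^N_{N-1}(X^{(\rho)},\ccdot)\to\int P_{\nu\mid N-1}\,d\wt\La^N_{N-1}(X^*,\ccdot)$; a direct asymptotic bound shows that "non-natural" $Y$'s (those whose boundary coordinates themselves drift toward $0$) contribute weights decaying geometrically in $\rho$, so the same integrals also equal $\sum_{Y^*\pprec X^*}W(Y^*)P_{\nu\mid N-1}(Y^*)$. Taking $\nu=\varnothing$ forces $\sum W(Y^*)=1$, so $W$ is a probability measure; applying Lemma~\ref{lemma5.uni} at $K=N-1$ on $\wt\Om_{N-1}[-A,A]$, the matching pairings with every $P_{\nu\mid N-1}$ imply $W=\wt\La^N_{N-1}(X^*,\ccdot)$, and the latter is thereby concentrated on $\{Y^*\in\Om_n:Y^*\pprec X^*\}$.

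The hard part will be the geometric decay estimate ruling out the non-natural $Y$'s: a priori these could deposit mass on lower-cardinality strata $\Om_m$ ($m<n$) in the weak limit and spoil the support. The required bound is that drifting a boundary $y$-coordinate one $q$-step further toward $0$ introduces an extra factor of order $q^\rho$ via $\prod|y_i|$ while the competing $R_{\eps,m}$ contribution remains bounded (checkable directly from \eqref{eq5.R} together with the same Pochhammer asymptotics used above), giving a dominating geometric series in $q^\rho$ that vanishes with $\rho$.
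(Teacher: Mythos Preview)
Your overall strategy coincides with the paper's: approximate $X^*$ by a sequence $X^{(\rho)}\in\Om_N$, use the continuity of $\wt\La^N_{N-1}$ to pass to the weak limit, and show that no mass escapes to lower-cardinality strata. The paper carries this out with all $d=N-n$ extras placed on one side of~$0$ at minimal $t$-spacing, so that the internal interlacing coordinates are frozen, exactly as in your ``microscopic slot'' picture.

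The substantive gap is your last paragraph. The two assertions you rely on are both false as stated:
\begin{itemize}
\item[\textbf{(a)}] ``Drifting a boundary $y$-coordinate one $q$-step further toward $0$ introduces an extra factor of order $q^\rho$ via $\prod|y_i|$.'' One step multiplies $\prod|y_i|$ by $q$, not by $q^\rho$; nothing in $\prod|y_i|$ alone produces a $\rho$-dependent decay per step.
\item[\textbf{(b)}] ``The competing $R_{\eps,m}$ contribution remains bounded.'' It does not. The relevant block of $R_{\eps,m}(X^{(\rho)};q,t)$ (the product over the $d$ extras) is, for a boundary coordinate $y=x^*_k q^{m'}$,
\[
\prod_{i=1}^d\frac{(t^{1-i}q^{-\rho};q)_{m'}}{(t^{-i}q^{1-\rho};q)_{m'}}\asymp\Bigl(\tfrac{t}{q}\Bigr)^{dm'},
\]
which \emph{grows} with $m'$ whenever $t>q$. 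The actual decay comes from combining this with the Vandermonde/volume factor: $V(Y)/V(X^{(\rho)})\prod|y_i|$ contributes a factor $\asymp q^{dm'}$ (not $q^{m'}$), because the drifting coordinate enters $d$ of the Vandermonde differences with the extras; the product of the two contributions is $\asymp t^{dm'}$. Getting this balance right is the content of the paper's Lemmas~6.A--6.E and Propositions~6.C--6.E.
\end{itemize}

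There is a second issue your sketch does not address. The sign-changing slot allows the coordinate $y_{k+d}$ to land on \emph{either} side of $0$. In the paper's one-sided construction this produces the splitting $S_A=S_A^-\sqcup S_A^+$, and the $S_A^+$ part (where $y_{k+d}>0$) requires a separate estimate (Claim~2 and Proposition~6.D): the decay mechanism is different, coming from an additional factor $\asymp t^{d\rho}$ supplied by the infinite-product block of $R_{\eps,m}$. Your two-sided construction has an analogous problem at the slot between $\tilde x^+_{k'}$ and $\tilde x^-_{l'}$, and it is not handled by the heuristic in your last paragraph.

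In short, the architecture of your argument is correct, but the ``geometric decay estimate'' you propose is wrong as written and cannot be fixed by a one-line bound; it needs the detailed balancing of Pochhammer asymptotics that occupies the bulk of the paper's proof.
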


The remaining part of the section is devoted to the proof of this theorem. 

 \subsection{Preparation to proof} 
 
Let $X^*\in\Om_n\subset \wt\Om_N$ be fixed, $k:=k(X^*)$,  $l:=l(X^*)$ (see \eqref{eq6.B}), and $d:=N-n$. We assume $d>0$.  As usual, we enumerate the points of $X^*$ in the descending order: $X^*=(x^*_1>\dots>x^*_n)$. Suppose that both $k$ and $l$ are strictly positive (otherwise the argument is simplified, see section \ref{sect6.endproof}). Then  $x^*_k>0>x^*_{k+1}$. We will also use the alternative notation
$$
x^+:=x^*_k, \qquad x^-:=x^*_{k+1}.
$$

Let $A$ be a large positive integer. We insert between $0$ and $x^*_k$ the $d$-point configuration 
$$
X^0_A:=(x^+ q^A t, \, x^+ q^A t^2 ,\dots, x^+q^A t^d),
$$
and we set $X_A:=X^*\cup X^0_A$. Thus,
\begin{multline*}
X_A=(x^*_1,\dots,x^*_{k-1}, x^+;\;  x^+ q^A t, \, x^+q^A t^2,\dots, x^+ q^A t^d;\; x^-, x^*_{k+2},\dots,x^*_n)\\ =:(x_1,\dots, x_N).
\end{multline*}
Obviously, $X_A\in\Om_N$ and $X_A\to X^*$ as $A\to\infty$. Therefore, by the definition of the extended matrix $\wt\La^N_{N-1}$, the measure $\wt\La^N_{N-1}(X^*,\ccdot)$ is the weak limit of the measures $\LaN(X_A,\ccdot)$.  

Let $S_A$ denote the support of the pre-limit measure $\LaN(X_A,\ccdot)$: it consists of the configurations $Y\prec X_A$. Each $Y\in S_A$ contains the configuration
$$
Y^0_A:=(x^+ q^A t, \,x^+q^A  t^2,\dots, x^+q^A t^{d-1}),
$$
and we set $Y^*:=Y\setminus Y^0_A$.  

Thus, we may write
\begin{gather*}
Y=(y^*_1,\dots,y^*_k;\;  x^+ q^A t, \, x^+q^A t^2,\dots, x^+ q^A t^{d-1};\; y^*_{k+1},\dots,y^*_n),\\
Y^*=(y^*_1,\dots,y^*_k; y^*_{k+1},\dots,y^*_n).
\end{gather*} 
Note that the correspondence $Y\mapsto Y^*$ is one-to-one; we denote by $S^*_A$ the image of  the set $S_A$ under this correspondence.  

Next, observe that $y^*_{k+1}$ is the only point of the configuration  $Y$ that can be both to the left and to the right of zero. It is important for us to distinguish these two possibilities, so we write 
$$
S_A=S^-_A\sqcup S^+_A, \qquad S^-_A:=\{Y\in S_A: y^*_{k+1}<0\}, \quad S^+_A:=\{Y\in S_A: y^*_{k+1}>0\}
$$
and likewise
$$
S^*_A=S^{* -}_A\sqcup S^{* +}_A, \qquad S^{* -}_A:=\{Y^*\in S^*_A: y^*_{k+1}<0\}, \quad S^{* +}_A:=\{Y^*\in S^*_A: y^*_{k+1}>0\}.
$$
The bijection $S_A \leftrightarrow S^*_A$ gives rise to the bijections $S^\pm_A \leftrightarrow S^{* \pm}_A$.

Finally, observe that if $Y\in S^-_A$, then $Y^*\in \Om_n$ and $Y^*\pprec X^*$, while for $Y\in S^+_A$ this is wrong. 

\subsection{Reduction of the problem}

 The configurations $Y^*=(y^*_1,\dots,y^*_n)\in S^*_A$ satisfy the following constraints:

$\bullet$ Each of the $n-2$ points $y^*_1,\dots,y^*_{k-1}, y^*_{k+2},\dots,y^*_n$ may range only over a fixed finite set which does not depend on $A$. 

$\bullet$ The point $y^*_k$ may range over the set $\{x^+, x^+q,\dots,x^+q^A\}$, which is a finite geometric progression of the growing length $A+1$.

$\bullet$ The range of $y^*_{k+1}$ is the disjoint union of two infinite geometric progressions:
\begin{equation}\label{eq6.C1}
\{x^-, x^-q,x^-q^2,\dots\}\cup \{x^+t^d q^A,\, x^+t^d q^{A+1},\, x^+t^d q^{A+2},\dots\}.
\end{equation}

Let $\de>0$ be small and $S^-_A(\de)$ denote the subset of configurations $Y\in\ S^-_A$ satisfying at least one of the conditions $y^*_k\le\de$, $|y^*_{k+1}|\le\de$. 

Our first task is to reduce Theorem \ref{thm6.A} to the following two claims.

\smallskip

\emph{Claim} 1. We have 
\begin{equation*}
\lim_{\de\to 0}\sum_{Y\in S^-_A(\de)} \LaN(X_A, Y)=0 \quad \text{uniformly on $A$}.
\end{equation*}

\emph{Claim} 2. We have
\begin{equation*}
\lim_{A\to\infty}\sum_{Y\in S^+_A} \LaN(X_A, Y)=0.
\end{equation*}

\smallskip

\begin{proposition} These claims imply Theorem \ref{thm6.A}. 
\end{proposition}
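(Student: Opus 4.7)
The plan is to test the measure $\wt\La^N_{N-1}(X^*,\cdot)$ against continuous compactly supported functions, exploiting that it is the weak limit of $\LaN(X_A,\cdot)$ by Proposition~\ref{prop6.A}. For a given $g\in C_c(\wt\Om_{N-1})$ I would split the sum $\sum_{Y\in S_A}g(Y)\,\LaN(X_A,Y)$ into three pieces indexed by $S^+_A$, $S^-_A(\de)$, and $S^-_A\setminus S^-_A(\de)$, and analyze each separately as $A\to\infty$. The first two pieces are bounded in absolute value by $\|g\|_\infty\cdot\LaN(X_A,S^+_A)$ and $\|g\|_\infty\cdot\LaN(X_A,S^-_A(\de))$ respectively, which go to zero directly from Claim~2 (as $A\to\infty$) and from Claim~1 (uniformly in $A$, by sending $\de\to 0$ afterwards). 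Both are immediate.

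The interesting piece is the third sum. The key observation is that, as $A\to\infty$, the ``middle block'' $Y^0_A$ collapses onto $\{0\}$, so every $Y=Y^*\cup Y^0_A$ in $S^-_A$ becomes $\eps$-close to $Y^*$ in the topology of $\wt\Om$ for any fixed $\eps>0$ and $A$ large enough. Now take $g$ vanishing on $T:=\{Y^*\in\Om_n:Y^*\pprec X^*\}$ and note that the truncation $y^*_k>\de$, $|y^*_{k+1}|>\de$ keeps all relevant $Y,Y^*$ inside a single compact subset of $\wt\Om_{N-1}$ on which $g$ is uniformly continuous. Given $\eta>0$, I would pick $\eps$ witnessing uniform continuity and then $A$ large enough that $Y^0_A\subset(-\eps,\eps)$; this replaces $g(Y)$ by $g(Y^*)=0$ with error at most $\eta$. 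Sending $A\to\infty$, $\de\to 0$, $\eta\to 0$ in that order yields $\int g\,d\wt\La^N_{N-1}(X^*,\cdot)=0$ for every $g\in C_c(\wt\Om_{N-1})$ vanishing on $T$, hence $\wt\La^N_{N-1}(X^*,\cdot)$ is supported on the closure $\overline T$ by a Urysohn argument.

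Finally I would exclude mass on $\overline T\setminus T$. By Definition~\ref{def6.B} the only coordinates of a point of $T$ that range over an infinite set are $y^*_k$ and $y^*_{k+1}$, so $\overline T\setminus T$ consists of configurations with strictly fewer than $n$ points, obtained by sending $y^*_k\to 0^+$ and/or $y^*_{k+1}\to 0^-$. Any open neighborhood of such a degenerate point forces $y^*_k\le\eps$ or $|y^*_{k+1}|\le\eps$ for every $Y\in S_A$ lying in the neighborhood; these configurations are captured by $S^-_A(\eps)\cup S^+_A$, whose $\LaN(X_A,\cdot)$-mass vanishes in the limit by Claims~1 and~2. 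Hence $\wt\La^N_{N-1}(X^*,\overline T\setminus T)=0$, and the measure is concentrated on $T$ as Theorem~\ref{thm6.A} claims. I expect the main obstacle to be the middle piece: the reduction from $g(Y)$ to $g(Y^*)$ must be carried out uniformly over the infinite family of admissible $Y^*$'s, and the $\de$-truncation coupled with the compact support of $g$ is precisely what makes this compactness argument work.
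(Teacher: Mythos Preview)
Your proposal is correct and uses the same three ingredients as the paper's proof: Claim~2 to discard the contribution of $S^+_A$, the observation that $Y^0_A\to\{0\}$ so that $Y$ and $Y^*$ become close in the topology of $\wt\Om$, and Claim~1 to control the mass that could escape through $y^*_k\to0$ or $y^*_{k+1}\to0$. The packaging differs slightly. The paper pushes $M^-_A:=\LaN(X_A,\cdot)\big|_{S^-_A}$ forward along the bijection $Y\mapsto Y^*$ to obtain a measure $M^{*-}_A$ living directly on the countable discrete set $T=\{Y^*\pprec X^*\}$; since $Y$ and $Y^*$ merge as $A\to\infty$, $M^-_A$ and $M^{*-}_A$ have the same weak limit $M$, and then tightness of $\{M^{*-}_A\}$ on the discrete space $T$ (supplied by Claim~1) immediately forces $M$ to be a probability measure on $T$. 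Your route instead tests against $g\in C_c(\wt\Om_{N-1})$ vanishing on $T$, lands on $\overline T$ via Urysohn, and then needs a separate Portmanteau step to exclude the boundary $\overline T\setminus T$. Both work; the pushforward-plus-tightness argument is a bit shorter because support on $T$ (rather than $\overline T$) comes out in one stroke, without having to revisit the degenerate strata.
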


\begin{proof}
Let us abbreviate
$$
M_A:=\LaN(X_A,\ccdot), \qquad M:=\wt\La^N_{N-1}(X^*,\ccdot).
$$
We can write $M_A=M^-_A+M^+_A$, where $M^\pm_A$ stands for the restriction of $M_A$ to the subset $S^\pm_A$ (more accurately, for this decomposition one should define $M^\pm_A$ as the result of multiplication of $M_A$ by the characteristic function of $S^\pm_A$). 

We know that $M$ is the weak limit of the measures $M_A$ as $A\to+\infty$. On the other hand, Claim 2 tells us that the total mass of $M^+_A$ tends to $0$ as $A\to+\infty$.  Therefore, $M$ is also the weak limit of the measures $M^-_A$.

Recall that a configuration $Y\in S^-_A$ differs from the corresponding configuration $Y^*\in S^{* -}_A$ only by the $(d-1)$-point configuration $Y^0_A$. The latter shrinks to $0$ as $A\to+\infty$. It follows that $Y$ and $Y^*$ get closer to each other as $A\to+\infty$, with respect to the uniform structure. Therefore, denoting by $M^{* -}_A$ the pushforward of $M^-_A$ under the bijection $S^-_A\leftrightarrow S^{*-}_A$, we conclude that the measures $M^-_A$ and $M^{* -}_A$ (which we regard as subprobability measures on the compact space $\wt\Om_N$) have a common weak limit. Thus, $M$ is the weak limit of the measures $M^{* -}_A$. 

We forget now about the compact space $\wt\Om_N$ and regard the measures $M^{* -}_A$ as subprobability measures on the countable discrete space 
$$
\Om^*_n:=\{Y^*\in\Om_n: Y^*\pprec X^*\}.
$$ 
We know that the total mass of $M^{* -}_A$ tends to $1$ as $A\to+\infty$. We also know  that $M^{* -}_A(Y^*)\to M(Y^*)$ for any fixed $Y^*\in \Om^*_n$ (this is a consequence of the weak convergence $M^{* -}_A\to M$). 

To finish the proof it remains to show that the family $\{M^{* -}_A\}$ is tight on $\Om^*_n$ (although our measures are not probability measures, only subprobability ones, this claim makes sense, because their masses tends to $1$). 

Observe that, on the discrete space $\Om^*_n$,  a configuration $Y^*\in\Om^*_n$ can escape to infinity only if $y^*_k\to0$ or $y^*_{k+1}\to0$, or both. From this it is seen that the desired tightness property is guaranteed by Claim 1. 
\end{proof}

We proceed to the proof of Claims 1 and 2. It is based on formula \eqref{eq5.R}, which we apply to $X=X_A=X^*\cup X^0_A$ and $Y=Y^*\cup Y^0_A$.  In our current notation, the configuration $X$ has $k+d$ points on the right of $0$, so that the parameter $k=k(X)$ in \eqref{eq5.L} should be replaced with $k+d$. Now \eqref{eq5.R} takes the form
\begin{multline}\label{eq6.E}
\LaN(X_A,Y)=\wt C_N(q,t)\frac{V(Y)}{V(X_A)}\cdot \prod_{r=1}^{N-1}|y_r|\cdot \prod_{r=1}^{N-1}\frac{(q^{m_r+1};q)_\infty}{(q^{m_r}t;q)_\infty} \\
\times \prod_{r=1}^{N-1} \prod_{\substack{s=1,\dots,N\\ s\ne r+\eps(r)}}\frac{(x_{r+\eps(r)}t/x_s;q)_{m_r}}{(x_{r+\eps(r)}q/x_s;q)_{m_r}}\cdot \prod_{\substack{j=1,\dots,N\\ j\ne k+d+1-\eps(k+d)}}\frac{(x_{k+d+1-\eps(k+d)}t/x_j;q)_\infty}{(x_{k+d+1-\eps(k+d))}q/x_j;q)_\infty},
\end{multline}
where 
$$
X_A=(x_1,\dots,x_N)=(x^*_1,\dots,x^*_{k-1}, x^+;\;  x^+ q^A t, \, x^+ q^A t^2,\dots, x^+ q^At^d;\; x^-, x^*_{k+2},\dots,x^*_n),
$$
$$
Y=(y_1,\dots,y_{N-1})=(y^*_1,\dots,y^*_k;\;  x^+ q^A t, \, x^+ q^A t^2,\dots, x^+ q^A t^{d-1};\; y^*_{k+1},\dots,y^*_n),
$$
$$
\eps(1)=\dots=\eps(k+d-1)=0, \quad \eps(k+d+1)=\dots=\eps(N-1)=1, 
$$
and
$$
\eps(k+d)=\begin{cases} 1, & Y\in S^-_A, \\ 0,  & Y\in S^+_A, \end{cases}
$$
so that
\begin{equation}\label{eq6.F}
k+d+1-\eps(k+d)=\begin{cases} k+d, & Y^*\in S^-_A, \\ k+d+1, & Y^*\in S^+_A. \end{cases}
\end{equation}
Next, the parameters $m_1,\dots,m_{N-1}$ are as follows:
$$
m_{k+1}=\dots=m_{k+d-1}=0;
$$
$m_1,\dots,m_{k-1}$ and $m_{k+d+1},\dots,m_{N-1}$ are bounded from above by certain constants that depend only on $X^*$ but not on $A$; finally, $0\le m_k\le A$ and $m_{k+d}\in\Z_{\ge0}$.

Further, we set $m':=m_{k}$, $m'':=m_{k+d}$. Then
$$
y_k=y^*_k=x^+q^{m'},  \qquad m'=0,\dots,A,
$$
and
$$
y_{k+d}=y^*_{k+1}=\begin{cases} x^-q^{m''}, & Y^*\in S^-_A,\\ x^+ t^d q^{m''+A}, & Y^*\in S^+_A, \end{cases}\quad 
\text{where} \quad m''\in\Z_{\ge0}.
$$

Let $f$ and $g$ be two expressions, possibly depending on $A$; then we write $f\lesssim g$ if $|f|\le \const |g|$ with some constant factor that does not depend on $A$. Let also agree that the symbol $\asymp$ will denote an equality up to a factor whose absolute value is bounded away from zero and infinity, uniformly on $A\to\infty$. 

We will establish the following bounds on the quantities $\LaN(X_A,Y)$: 

\begin{proposition}\label{prop6.C}
Assume $Y\in S^-_A$. Then
$$
\LaN(X_A,Y)\le\const\, t^{dm'} (\max(q,t))^{m''}
$$
with some constant factor which does not depend on $A$.
\end{proposition}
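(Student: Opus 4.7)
The plan is to substitute $X = X_A$ and $Y = Y^*\cup Y^0_A$ (with $Y \in S^-_A$) directly into the product formula \eqref{eq6.E} and bound the result factor-by-factor, tracking separately the dependence on $A$, on $m'$ (through $y_k = x^+ q^{m'}$), and on $m''$ (through $y_{k+d} = x^- q^{m''}$). The guiding principle is that the shrinking clusters $X^0_A$ and $Y^0_A$ both collapse at rate $q^A$, so the $A$-dependence they introduce into $V(Y)/V(X_A)$, into $\prod_r |y_r|$, and into various Pochhammer factors must be shown to cancel, up to a uniformly bounded remainder.

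To extract the $m'$-decay I would isolate three sources. In $V(Y)/V(X_A)$, the mixed Vandermonde factors $(y_k - x^+ q^A t^i)$ for $i = 1,\dots,d-1$ contribute approximately $q^{(d-1)m'}$, and $|y_k|$ in $\prod_r|y_r|$ contributes another $q^{m'}$, giving $q^{dm'}$ in total. The Pochhammer factors $\prod_{s=k+1}^{k+d}(x_k t/x_s;q)_{m'}/(x_k q/x_s;q)_{m'}$, whose arguments have modulus of order $q^{-A}$, I rewrite using the classical symmetry
\[
(a;q)_m = (-a)^m q^{m(m-1)/2}(q^{1-m}/a;q)_m,
\]
which converts each quotient into $(t/q)^{m'}$ times a Pochhammer ratio whose arguments have modulus $\lesssim q^{A-m'} \le 1$ (here I use $m' \le A$). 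Taking the product over the $d$ cluster indices yields $(t/q)^{dm'}$, so combined with the Vandermonde piece we get $q^{dm'}\cdot (t/q)^{dm'} = t^{dm'}$. The $m''$-decay is extracted by applying the same mechanism to the $r = k+d$ term of \eqref{eq6.E} (with $\eps(k+d) = 1$), yielding $t^{dm''}$; since $t^{dm''}\le \max(q,t)^{m''}$ for $0<t<1$ and $d\ge 1$, this is in fact stronger than the stated bound.

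The remaining factors --- the constant $\wt C_N(q,t)$, the frozen ratio $V(Y^*)/V(X^*)$, the bounded quantities $|y^*_r|$ and $|x^*_r|$, and the factors $\prod_r(q^{m_r+1};q)_\infty/(q^{m_r}t;q)_\infty$ (each bounded uniformly in $m_r\ge 0$) --- are all uniformly bounded in $A$, $m'$, $m''$. The main obstacle I anticipate is verifying that the ``small-argument'' Pochhammer ratios produced by the symmetry step really are bounded in absolute value uniformly, rather than developing zero denominators. This reduces to checking that arguments of the form $q^{A-m'+i}/c$ (with $c$ a bounded combination of the points of $X^*$ and powers of $t$) avoid the singular set $\{1, q^{-1}, q^{-2},\dots\}$ with a controllable gap; this follows from the discrete $q$-lattice structure built into $\Om_N$ together with the constraints $m' \le A$ and $m''\in\Z_{\ge 0}$.
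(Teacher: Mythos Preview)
Your treatment of $m'$ is correct and matches the paper: the Vandermonde and modulus factors give $q^{dm'}$, the cluster Pochhammer ratios give $(t/q)^{dm'}$ via the symmetry identity (using $m'\le A$), and the product is $t^{dm'}$.

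The gap is in the $m''$ part. The ``same mechanism'' does not apply, for two reasons. First, there is no bound $m''\le A$: the index $m''$ ranges over all of $\Z_{\ge0}$. When $m''>A$ your symmetry trick produces Pochhammer ratios whose transformed arguments have modulus $\sim q^{A-m''}$, which blow up rather than staying bounded. The correct estimate (the paper's Lemma~\ref{lemma6.B}, which exploits that $x^-/x$ is \emph{negative} for $x\in X^0_A$) is $\asymp(t/q)^{\min(m'',A)}$ per cluster factor, not $(t/q)^{m''}$. Second, the Vandermonde factors linking $y_{k+d}=x^-q^{m''}<0$ to the positive cluster points are sums $x^+q^At^i+|x^-|q^{m''}\asymp q^{\min(A,m'')}$, not differences of order $q^{m''}$; together with $|y_{k+d}|\asymp q^{m''}$ this gives $q^{m''+(d-1)\min(A,m'')}$ (Lemma~\ref{lemma6.C}), not $q^{dm''}$.

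The combined bound is therefore $t^{dm'}\cdot t^{d\min(m'',A)}q^{m''-\min(m'',A)}$, after which a short case analysis on $m''\lessgtr A$ and $q\lessgtr t$ is still required to reach $(\max(q,t))^{m''}$. Your claimed bound $t^{dm''}$ is not merely unproved but actually false: for $m''>A$ one has $\LaN(X_A,Y)\asymp t^{dm'+dA}q^{m'+m''-A}$, and when $q>t^d$ the ratio of this to $t^{dm'+dm''}$ is $(q/t^d)^{m''-A}q^{m'}$, which is unbounded in $m''$. You also omit the final product over $j$ in \eqref{eq6.E}; it is indeed bounded here (because $x_{k+d}=x^+q^At^d\to0$), but it must be checked rather than lumped in with the ``frozen'' factors.
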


\begin{proposition}\label{prop6.D}
Assume $Y\in S^+_A$. Then 
$$
\LaN(X_A, Y)\asymp t^{dm'+dA} q^{m'+m''}.
$$
\end{proposition}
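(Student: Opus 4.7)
My plan is to substitute $Y \in S^+_A$ (so $\eps(k+d)=0$, $k+d+1-\eps(k+d)=k+d+1$, $y_{k+d}=y^*_{k+1}=x^+t^dq^{A+m''}$) directly into \eqref{eq6.E} and track the $(A,m',m'')$-scaling of each constituent factor, with the goal of showing that \eqref{eq6.E} equals $t^{dm'+dA}q^{m'+m''}$ up to a factor that is uniformly bounded above and below. Throughout, the interlacement constraint guarantees $0\le m'\le A$ (since the range $l_k=A$ is provided by Lemma \ref{lemma5.F}) and $m''\ge 0$, while the remaining $m_r$ with $r\notin\{k,k+d\}$ stay uniformly bounded.

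I would partition the right-hand side of \eqref{eq6.E} into five groups: (a) $\wt C_N(q,t)$, a constant; (b) $\prod_{r}|y_r|$; (c) the Vandermonde ratio $V(Y)/V(X_A)$; (d) the middle double product $\prod_{r,s}(x_{r+\eps(r)}t/x_s;q)_{m_r}/(x_{r+\eps(r)}q/x_s;q)_{m_r}$; (e) the prefactor $\prod_r(q^{m_r+1};q)_\infty/(q^{m_r}t;q)_\infty$ together with the last product $\prod_{j\ne k+d+1}(x^-t/x_j;q)_\infty/(x^-q/x_j;q)_\infty$. Group (b) is $\asymp q^{m'+m''+Ad}$: the explicit factors $|y^*_k|=x^+q^{m'}$, $|y^*_{k+1}|=x^+t^dq^{A+m''}$, and the middle block $\prod_{s=1}^{d-1}|x^+q^At^s|\asymp q^{A(d-1)}$, multiplied by uniformly bounded $|y^*_i|$ for $i\notin\{k,k+1\}$. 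Group (c): matching coordinates of $Y$ and $X_A$ in descending order, all pair-differences cancel up to $\asymp 1$ corrections except those involving $y^*_k$ against the $d-1$ shared middle-block points and against $y^*_{k+1}$, each contributing a ratio $\asymp q^{m'}$ against the corresponding $V(X_A)$ factor involving $x^+$; totalling $V(Y)/V(X_A)\asymp q^{dm'}$ (the unmatched $V(X_A)$-pairs involving $x^-$ contribute only a bounded constant). Group (d): sub-products with $r\notin\{k,k+d\}$ are $\asymp 1$ uniformly (the apparent $q^{-A}$ blow-ups for $s$ in the middle block cancel between numerator and denominator); for $r=k$, $m_k=m'$, the product over $s\in\{k+1,\ldots,k+d\}$ evaluates to $\asymp(t/q)^{dm'}$ via the asymptotic $(q^{-A}\alpha;q)_{m'}\asymp(-q^{-A}\alpha)^{m'}q^{m'(m'-1)/2}$, valid for $m'\le A$; for $r=k+d$, all factors are $\asymp 1$ because $x_{k+d}/x_s\to 0$ for $s\in\{k+1,\dots,k+d-1\}$ and stays bounded for other $s$. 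Group (e): the prefactor lies in a fixed compact subset of $(0,\infty)$; the last product is $\asymp 1$ for $j$ outside the middle block, while for $j\in\{k+1,\dots,k+d\}$ the asymptotic $(\gamma q^{-A}t^\alpha;q)_\infty\asymp(-\gamma t^\alpha)^A q^{-A(A+1)/2}(\gamma t^\alpha;q)_\infty$ (with $\gamma=x^-/x^+$ fixed) yields a ratio $\asymp (t/q)^A$ per $j$, totalling $(t/q)^{dA}$. Combining: $q^{m'+m''+Ad}\cdot q^{dm'}\cdot t^{dm'}q^{-dm'}\cdot t^{dA}q^{-dA}=t^{dm'+dA}q^{m'+m''}$, as claimed.

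The main obstacle is the meticulous bookkeeping of the $q$-Pochhammer asymptotics in groups (d) and (e): individual Pochhammer factors exhibit $q^{-A}$-type or even $q^{-A(A+1)/2}$-type growth, but these blow-ups must cancel pairwise between numerator and denominator of each ratio, leaving clean geometric factors $(t/q)^{m'}$ per finite Pochhammer and $(t/q)^A$ per infinite one; confirming that the residual constants are bounded away from $0$ and $\infty$ uniformly in $(A,m',m'')$ requires precise control. A subsidiary subtlety in group (c) is the enumeration of ``extra'' pair-differences in $V(X_A)$ involving $x^-$ which have no counterpart in $V(Y)$; these are only finitely many and contribute bounded constants, but must be catalogued to obtain a two-sided $\asymp$ rather than a one-sided bound. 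No new ingredient beyond the tools of Section \ref{sect5} (Lemma \ref{lemma5.F} and the device of \eqref{eq5.psi}) is needed.
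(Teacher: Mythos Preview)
Your approach is essentially the paper's own: both analyses decompose formula \eqref{eq6.E} into the Vandermonde-times-$|y_r|$ block, the double $(r,s)$-product, and the final product over $j$, obtaining respectively $q^{(d+1)m'+m''+dA}$, $(t/q)^{dm'}$, and $(t/q)^{dA}$ (the paper packages these as Lemma~\ref{lemma6.E} and Corollary~\ref{cor6.B}, invoking Lemmas~\ref{lemma6.A} and~\ref{lemma6.D} for the Pochhammer asymptotics where you compute them directly). One small slip: in your group~(d), for $r=k+d$ and $s\in\{k+1,\dots,k+d-1\}$ the ratio $x_{k+d}/x_s=t^{d-(s-k)}$ is a fixed constant in $(0,1)$, not tending to~$0$; the boundedness conclusion still holds for this reason, so the argument goes through unchanged.
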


Let us show that Propositions \ref{prop6.C} and \ref{prop6.D} imply Claims 1 and 2, respectively. 

Indeed, in both claims, we have to estimate a double sum taken over two indices $(m',m'')$.

In Claim 1 we have to suppose that $-\de\le y_{k+d}=y^*_{k+1}<0$ with $\de\to0$, which amounts to saying that $m''\ge B$ with $B\to+\infty$. Summation over $m'$ produces an expression which is bounded by a constant, so we are left with the sum
$$
\sum_{m''\ge B} (\max(q,t))^{m''},
$$
which goes to $0$ as $B\to\infty$. 

In Claim 2 there is no similar constraint, but we have instead the factor $t^{dA}$; due to it the double sum goes to $0$ as $A\to+\infty$. 

Therefore, our problem is reduced to the proof of these two propositions. 

\subsection{Proof of Proposition \ref{prop6.C}}
We examine formula \eqref{eq6.E} taking into account the assumption $Y\in S^-_A$. 

According to \eqref{eq6.F} we have  $k+d+1-\eps(k+d)=k+d$. Thus, in the product over $j$, we have to substitute
$x_{k+d+1-\eps(k+d)}=x_{k+d}$. As  $A$ grows, the point $x_{k+d}$, which is in $X^0_A$, goes to $0$. From this it follows that the product over $j$ remains bounded. 

Next, examine the double product over $(r,s)$. Observe that if $m$ remains bounded, then any fraction of the form $(z;q)_m/(zt^{-1}q;q)_m$ also remains uniformly bounded, even if $z$ grows together with $A$. It follows that all fractions with $r$ distinct from $k$ and $k+d$ remain bounded. Hence we are left with 
\begin{equation}\label{eq6.G}
\prod_{r=k, k+d} \prod_{\substack{s=1,\dots,N\\ s\ne r+\eps(r)}}\frac{(x_{r+\eps(r)}t/x_s;q)_{m_r}}{(x_{r+\eps(r)}q/x_s;q)_{m_r}}
\end{equation}
Since $\eps(k)=0$ and $\eps(k+d)=1$, we have 
$$
k+\eps(k)=k, \qquad  k+d+\eps(k+d)=k+d+1,
$$
so that 
$$
x_{k+\eps(k)}=x^+, \qquad x_{k+d+\eps(k+d)}=x^-.
$$  
Therefore, \eqref{eq6.G} is equal to
$$
\prod_{x\in X\setminus\{x^+\}}\frac{(x^+t/x;q)_{m'}}{(x^+q/x;q)_{m'}}\cdot \prod_{x\in X\setminus\{x^-\}}\frac{(x^-t/x;q)_{m''}}{(x^-q/x;q)_{m''}}
$$
In this expression, all the fractions with $x\notin X^0_A$ remain bounded, hence the only relevant part is
\begin{multline}\label{eq6.H}
\prod_{x\in X^0_A}\frac{(x^+t/x;q)_{m'}}{(x^+q/x;q)_{m'}}\cdot \prod_{x\in X^0_A}\frac{(x^-t/x;q)_{m''}}{(x^-q/x;q)_{m''}}\\
=\prod_{i=1}^d \frac{(t^{1-i}q^{-A};q)_{m'}}{(t^{-i}q^{1-A};q)_{m'}}\cdot \prod_{i=1}^d\frac{((x^-/x^+)t^{1-i}q^{-A};q)_{m''}}{((x^-/x^+)t^{-i}q^{1-A};q)_{m''}}.
\end{multline}

To handle the resulting two products in \eqref{eq6.H} we need two lemmas.

\begin{lemma}\label{lemma6.A}
Let $u>\max(q,t)$ be fixed. For $m\le A$,
$$
\frac{(u q^{-A};q)_m}{(u t^{-1} q^{1-A};q)_m}\asymp \left(\frac tq\right)^m
$$
\end{lemma}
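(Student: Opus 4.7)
The plan is to extract the claimed factor $(t/q)^m$ by a direct algebraic manipulation of each factor in the two $q$-Pochhammer products, and then show that the leftover product is uniformly bounded away from $0$ and $\infty$.

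First I would write both $q$-Pochhammer symbols as products and rewrite the $i$-th factor ($0 \le i \le m-1$) as
\begin{equation*}
\frac{1-u q^{-A+i}}{1-ut^{-1}q^{1-A+i}}
= \frac{u q^{-A+i}}{ut^{-1}q^{1-A+i}}\cdot \frac{1-u^{-1}q^{A-i}}{1-u^{-1}t q^{A-1-i}}
= \frac{t}{q}\cdot \frac{1-u^{-1}q^{A-i}}{1-u^{-1}t q^{A-1-i}},
\end{equation*}
which is obtained by factoring out the ``large'' summand from the numerator and denominator. Taking the product over $i=0,\dots,m-1$ gives
\begin{equation*}
\frac{(uq^{-A};q)_m}{(ut^{-1}q^{1-A};q)_m}
= \left(\frac{t}{q}\right)^m \prod_{j=A-m+1}^{A}(1-u^{-1}q^{j})\cdot \prod_{j=A-m}^{A-1}\frac{1}{1-u^{-1}tq^{j}},
\end{equation*}
after the index changes $j=A-i$ and $j=A-1-i$.

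The remaining task is to prove that the two displayed products are both bounded above and below by constants independent of $A$ and $m$ (subject to $m\le A$). Since $m\le A$, the indices $j$ in the first product satisfy $j\ge 1$, so $u^{-1}q^j \le u^{-1}q < 1$ (this is where the hypothesis $u>q$ is used), which guarantees that every factor lies in $(0,1)$. Similarly, in the second product $j\ge 0$, so $u^{-1}tq^j \le u^{-1}t<1$ (using $u>t$), and every factor is in $(0,1)$. Hence
\begin{equation*}
(u^{-1}q;q)_\infty \;\le\; \prod_{j=A-m+1}^{A}(1-u^{-1}q^{j}) \;\le\; 1,\qquad
1 \;\le\; \prod_{j=A-m}^{A-1}\frac{1}{1-u^{-1}tq^{j}} \;\le\; \frac{1}{(u^{-1}t;q)_\infty}.
\end{equation*}
Both extreme bounds are finite positive constants because $u>\max(q,t)$ prevents any zero factor in the infinite $q$-Pochhammer symbols. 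This establishes the uniform estimate $\asymp 1$ for the leftover product, completing the proof.

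There is no real obstacle here; the only point requiring a bit of care is recognizing that the hypothesis $u>\max(q,t)$ is exactly what is needed to keep the shifted arguments $u^{-1}q^j$ and $u^{-1}tq^j$ bounded away from $1$ for all $j\ge 1$ and $j\ge 0$ respectively, which in turn is what makes the length-$m$ products uniformly bounded even as $m$ grows with $A$.
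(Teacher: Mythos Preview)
Your proof is correct and follows essentially the same approach as the paper: factor out $(t/q)^m$ by extracting the dominant term from each factor, then observe that the remaining ratio $(u^{-1}q^{A-m+1};q)_m/(u^{-1}t q^{A-m};q)_m$ is uniformly bounded because $m\le A$ forces the arguments to lie in $(0,u^{-1}q]$ and $(0,u^{-1}t]$, both contained in $(0,1)$ by the hypothesis $u>\max(q,t)$. Your version merely spells out the infinite-product bounds $(u^{-1}q;q)_\infty$ and $1/(u^{-1}t;q)_\infty$ a bit more explicitly than the paper does.
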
 

\begin{proof}
We have 
\begin{gather*}
\frac{(u q^{-A};q)_m}{(u t^{-1} q^{1-A};q)_m}=\prod_{i=1}^m\frac{1-uq^{i-A-1}}{1-ut^{-1}q^{i-A}}=\left(\frac tq\right)^m \frac{(u^{-1}q^{A-m+1};q)_m}{(u^{-1}tq^{A-m};q)_m}\asymp \left(\frac tq\right)^m,
\end{gather*}
where the last step is justified by the fact that, due to the assumption on $u$,
$$
0<u^{-1}q^{A-m+1}\le u^{-1}q<1 \quad \text{and} \quad 0<u^{-1}tq^{A-m}\le u^{-1}t<1.
$$
\end{proof}

\begin{lemma}\label{lemma6.B}
Let $w>0$ be fixed. Then
$$
\frac{(-wq^{-A};q)_m}{(-wt^{-1}q^{1-A};q)_m}\asymp \left(\frac tq\right)^{\min(m,A)}.
$$
\end{lemma}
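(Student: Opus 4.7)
The plan is to reduce the ratio to a finite product and perform a careful logarithmic estimate, analogous to Lemma \ref{lemma6.A} but now exploiting the positivity coming from the sign $-w$ so that ``large $v$'' and ``small $v$'' regimes can coexist. First I would expand
\begin{equation*}
\frac{(-wq^{-A};q)_m}{(-wt^{-1}q^{1-A};q)_m}=\prod_{i=0}^{m-1}\frac{1+wq^{i-A}}{1+wq^{i-A+1}/t}=\prod_{i=0}^{m-1}\frac{1+v_i}{1+v_iq/t},
\end{equation*}
where $v_i:=wq^{i-A}>0$. Since $w>0$ all factors are positive, bounded between $\min(1,t/q)$ and $\max(1,t/q)$; the claim is thus equivalent to a logarithmic estimate on the sum of these factors.

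For the individual factor $\psi(v):=\log\frac{1+v}{1+vq/t}$ on $(0,\infty)$ I would record two elementary asymptotics:
\begin{equation*}
\psi(v)=v(1-q/t)+O(v^2)=O(v)\quad\text{as }v\to 0,
\end{equation*}
by directly expanding $\log(1+v)$ and $\log(1+vq/t)$, and
\begin{equation*}
\psi(v)=\log(t/q)+\log\frac{1+1/v}{1+t/(vq)}=\log(t/q)+O(1/v)\quad\text{as }v\to\infty.
\end{equation*}
Both estimates are uniform on the respective ranges $v\le 1$ and $v\ge 1$, with implicit constants depending only on $q,t$.

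Next I would split $\{0,1,\dots,m-1\}$ into $I_L:=\{i:v_i\ge 1\}$ and $I_S:=\{i:v_i<1\}$; the threshold $v_i=1$ occurs at $i\approx A+\log_q(1/w)$, so that $|I_L|=\min(m,A)+O(1)$ and $|I_S|=\max(0,m-A)+O(1)$. On $I_L$ one gets
\begin{equation*}
\sum_{i\in I_L}\psi(v_i)=|I_L|\log(t/q)+O\Bigl(\sum_{i\in I_L}1/v_i\Bigr)=|I_L|\log(t/q)+O\Bigl(w^{-1}\sum_{i\in I_L}q^{A-i}\Bigr)=|I_L|\log(t/q)+O(1),
\end{equation*}
since the geometric series $\sum_{j\ge 0}q^j$ converges. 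On $I_S$ one gets
\begin{equation*}
\sum_{i\in I_S}\psi(v_i)=O\Bigl(\sum_{i\in I_S}v_i\Bigr)=O\Bigl(w\sum_{i\ge A}q^{i-A}\Bigr)=O(1)
\end{equation*}
by the same geometric bound. Adding the two contributions yields $\sum_{i=0}^{m-1}\psi(v_i)=\min(m,A)\log(t/q)+O(1)$, which exponentiates to the asserted $\asymp(t/q)^{\min(m,A)}$.

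The only delicate point is that the $O(1)$ remainders must be uniform as $m$ and $A$ vary \emph{independently}; this holds because both error sums are controlled by tails of a common geometric series of ratio $q<1$ that does not depend on $A$. I expect no real obstacle: this lemma is essentially a companion to Lemma \ref{lemma6.A}, with the new feature being that positivity of $1+v_i$ allows a single product to span both the ``$v_i$ large'' regime (contributing $\log(t/q)$ per term) and the ``$v_i$ small'' regime (contributing $O(v_i)$ per term), and the transition between them occurs around the index $i\approx A$, which is exactly what produces $\min(m,A)$ in the exponent.
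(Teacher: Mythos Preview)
Your argument is correct. The idea of taking logarithms, splitting the product at the threshold $v_i\approx 1$, and bounding both tails by convergent geometric series gives a uniform $O(1)$ remainder and hence the claimed $\asymp(t/q)^{\min(m,A)}$. One cosmetic point: when you write $\sum_{i\in I_S}v_i=O\bigl(w\sum_{i\ge A}q^{i-A}\bigr)$, the set $I_S$ does not literally begin at $i=A$ but at $i=A+\log_q(1/w)+O(1)$; this does not affect the bound, since the leading term of the geometric series is still at most $1$ (by the definition of $I_S$) and hence the tail is bounded by $1/(1-q)$ uniformly in $A,m$. The same remark applies to the sum over $I_L$.

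The paper's proof takes a more algebraic route. For $m\le A$ it factors each ratio $\frac{1+wq^{-A+i-1}}{1+wt^{-1}q^{-A+i}}$ as $\frac{t}{q}\cdot\frac{1+w^{-1}q^{A-i+1}}{1+w^{-1}tq^{A-i}}$, pulling out $(t/q)^m$ exactly and leaving the quotient $\frac{(-w^{-1}q^{A-m+1};q)_m}{(-w^{-1}tq^{A-m};q)_m}$; since $A-m\ge 0$, both arguments lie in $(0,w^{-1}]$, so the surviving $q$-Pochhammer ratio is trapped between $1$ and a finite infinite product depending only on $w,q,t$. For $m>A$ the paper simply splits $(-wq^{-A};q)_m=(-wq^{-A};q)_A\cdot(-w;q)_{m-A}$ (and similarly for the denominator), applies the previous case to the first factor, and observes that the second factor is a partial product of a convergent infinite product, hence $\asymp 1$.

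In effect, the paper performs the ``large $v$'' manipulation exactly (the factorization $\frac{1+v}{1+vq/t}=\frac{t}{q}\cdot\frac{1+1/v}{1+t/(vq)}$ is what produces the explicit $(t/q)^m$), while you do it asymptotically via $\psi(v)=\log(t/q)+O(1/v)$. The paper's version is a bit shorter and stays within standard $q$-Pochhammer identities; your version is perhaps more conceptual, making explicit that the exponent $\min(m,A)$ is simply the count of indices $i$ for which $v_i$ is large.
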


\begin{proof}
Suppose $m\le A$. We have 
\begin{gather*}
\frac{(-wq^{-A};q)_m}{(-wt^{-1}q^{1-A};q)_m}=\prod_{i=1}^m\frac{1+w q^{-A+i-1}}{1+wt^{-1}q^{-A+i}}=\left(\frac tq\right)^m\prod_{i=1}^m\frac{1+w^{-1} q^{A-i+1}}{1+w^{-1}q^{A-i}}\\
=\left(\frac tq\right)^m\frac{(-w^{-1}q^{A-m+1};q)_m}{(-w^{-1}tq^{A-m};q)_m}\asymp \left(\frac tq\right)^m,
\end{gather*}
where last step is justified by the fact that the quantities $w^{-1}q^{A-m+1}$ and $w^{-1}tq^{A-m}$ are bounded from above by a constant which does not depend on $A$ (here we use the hypothesis $m\le A$). 

Suppose now $m>A$. Then we have
$$
\frac{(-wq^{-A};q)_m}{(-wt^{-1}q^{1-A};q)_m}
=\frac{(-wq^{-A};q)_A}{(-wt^{-1}q^{1-A};q)_A}
\cdot \frac{(-wq;q)_{m-A}}{(-wt^{-1}q^1;q)_{m-A}}\asymp \left(\frac tq\right)^A.
$$

This completes the proof of the lemma.
\end{proof}

\begin{corollary}\label{cor6.A}
The expression \eqref{eq6.H} is \quad $\asymp \left(\dfrac tq\right)^{dm'+d\min(m'',A)}. $
\end{corollary}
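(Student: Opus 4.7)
The corollary is essentially a direct application of Lemmas \ref{lemma6.A} and \ref{lemma6.B} to the two products in \eqref{eq6.H}, so my plan is to verify that the hypotheses of these lemmas are met by each factor and then combine the resulting asymptotic estimates.

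First I would handle the first product in \eqref{eq6.H}. Each factor has the form $(uq^{-A};q)_{m'}/(ut^{-1}q^{1-A};q)_{m'}$ with $u=t^{1-i}$ for $i=1,\dots,d$. Since $0<t<1$, we have $u=t^{1-i}\ge 1>\max(q,t)$ for every $i\ge1$, so the hypothesis $u>\max(q,t)$ of Lemma \ref{lemma6.A} holds. The additional restriction $m'\le A$ in that lemma is exactly the range of $m'$ recorded after \eqref{eq6.G}. Applying Lemma \ref{lemma6.A} to each of the $d$ factors independently therefore yields
\[
\prod_{i=1}^d\frac{(t^{1-i}q^{-A};q)_{m'}}{(t^{-i}q^{1-A};q)_{m'}}\asymp\left(\frac{t}{q}\right)^{dm'}.
\]

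Next I would treat the second product. Here each factor has the form $(-wq^{-A};q)_{m''}/(-wt^{-1}q^{1-A};q)_{m''}$ with $w=-(x^-/x^+)t^{1-i}$. Because $x^-<0<x^+$ and $t^{1-i}>0$, the quantity $w$ is strictly positive, which is exactly the hypothesis of Lemma \ref{lemma6.B}. Note that here we impose no constraint relating $m''$ and $A$, which is consistent with the fact that Lemma \ref{lemma6.B} covers both the cases $m''\le A$ and $m''>A$. Applying Lemma \ref{lemma6.B} to each of the $d$ factors gives
\[
\prod_{i=1}^d\frac{((x^-/x^+)t^{1-i}q^{-A};q)_{m''}}{((x^-/x^+)t^{-i}q^{1-A};q)_{m''}}\asymp\left(\frac{t}{q}\right)^{d\min(m'',A)}.
\]

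Finally I would multiply the two estimates to obtain $(t/q)^{dm'+d\min(m'',A)}$, which is the claim of Corollary \ref{cor6.A}. There is no real obstacle: the only point worth double-checking is that the $\asymp$ relations (whose implicit constants depend on the fixed data $u$, $w$, $t$, $q$ but not on $A$) combine multiplicatively, which is immediate because only finitely many factors (namely $2d$) are involved and all of them are uniformly controlled.
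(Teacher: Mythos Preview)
Your proof is correct and follows essentially the same approach as the paper's own proof: apply Lemma \ref{lemma6.A} with $u=t^{1-i}$ to the first product and Lemma \ref{lemma6.B} with $w=-(x^-/x^+)t^{1-i}$ to the second, then multiply. You have simply made the hypothesis checks (that $u>\max(q,t)$, that $m'\le A$, and that $w>0$) more explicit than the paper does.
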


\begin{proof}
The first and the second product on the right-hand side of \eqref{eq6.H} are estimated by applying Lemma \ref{lemma6.A} and Lemma \ref{lemma6.B}, respectively. In Lemma \ref{lemma6.A} we set $u=t^{1-i}$,  and in Lemma \ref{lemma6.B} we set $-w=(x^-/x^+)t^{1-i}$. In both cases, $i=1,\dots,d$. 
\end{proof}

\begin{lemma}\label{lemma6.C}
Under the assumption that $Y\in S^-_A$, we have 
$$
\frac{V(Y)}{V(X_A)}\prod_{r=1}^{N-1}|y_r|\lesssim q^{dm'+m''+(d-1)\min(m'',A)}.
$$
\end{lemma}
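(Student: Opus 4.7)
The plan is to decompose both $V(Y)/V(X_A)$ and $\prod_{r=1}^{N-1}|y_r|$ according to the block splittings $X_A = X^* \sqcup X^0_A$ and $Y = Y^* \sqcup Y^0_A$, estimate each block separately, and then combine. Regrouping the factors of the Vandermondes gives
$$
\frac{V(Y)}{V(X_A)} = \frac{V(Y^*)}{V(X^*)} \cdot \frac{V(Y^0_A)}{V(X^0_A)} \cdot \frac{\prod_{y \in Y^*,\, y'\in Y^0_A}|y-y'|}{\prod_{x \in X^*,\, x'\in X^0_A}|x-x'|}.
$$

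The individual pieces are handled as follows. First, $V(X^*)\asymp 1$ and $\prod_{x\in X^*,\,x'\in X^0_A}|x-x'|\asymp 1$ since $X^*$ is fixed and the points of $X^0_A$ lie in a bounded region, away from every $x\in X^*$ (note $|x^+ - x^+q^At^i|\asymp x^+$). A direct computation using $X^0_A=\{x^+q^At^i\}_{i=1}^{d}$ and $Y^0_A=\{x^+q^At^i\}_{i=1}^{d-1}$ gives $V(X^0_A)/V(Y^0_A)\asymp q^{(d-1)A}$, so $V(Y^0_A)/V(X^0_A)\asymp q^{-(d-1)A}$. For $V(Y^*)$, the only factor that is not $\asymp 1$ is $|y^*_k - y^*_{k+1}| = x^+q^{m'} + |x^-|q^{m''} \asymp q^{\min(m',m'')}$; all other factors $|y^*_r-y^*_s|$ are bounded from $0$ and $\infty$ because the coordinates $y^*_j$ with $j\ne k,k+1$ lie in fixed finite sets bounded away from $0$ (by interlacement with $X^*$). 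Hence $V(Y^*)\asymp q^{\min(m',m'')}$.

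The cross terms between $Y^*$ and $Y^0_A$ are where the parameters $m',m'',A$ show up most substantially. For $y=y^*_j$ with $j\ne k,k+1$, the product $\prod_{y'\in Y^0_A}|y-y'|\asymp 1$. For $y=y^*_k = x^+q^{m'}$ and $y'=x^+q^At^i$, the key input is the interlacement bound $m'\le A$ together with $t^i<1$, giving $|y-y'| = x^+(q^{m'}-q^At^i)\asymp q^{m'}$ for each $i=1,\dots,d-1$, so the total contribution is $\asymp q^{(d-1)m'}$. For $y=y^*_{k+1} = x^-q^{m''}$ (opposite sign to $y'$), one gets $|y-y'| = |x^-|q^{m''} + x^+q^At^i \asymp q^{\min(m'',A)}$, producing a total of $\asymp q^{(d-1)\min(m'',A)}$. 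Meanwhile, $\prod_{r=1}^{N-1}|y_r|\asymp q^{m'+m''+(d-1)A}$, since the $d-1$ points of $Y^0_A$ contribute $\asymp q^{(d-1)A}$ and $|y^*_k|\asymp q^{m'}$, $|y^*_{k+1}|\asymp q^{m''}$, the rest being bounded.

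Multiplying the estimates together yields
$$
\frac{V(Y)}{V(X_A)}\prod_{r=1}^{N-1}|y_r| \asymp q^{\min(m',m'')+dm'+m''+(d-1)\min(m'',A)},
$$
and the lemma follows since $q^{\min(m',m'')}\le 1$. The main obstacle is the careful bookkeeping of the cross products involving $Y^0_A$: the opposite-sign case for $y^*_{k+1}$ must be split according to whether $m''\le A$ or $m''>A$ to produce the $\min(m'',A)$ dependence, whereas for $y^*_k$ the interlacement constraint $m'\le A$ is exactly what collapses the analogous estimate to plain $q^{m'}$ (with no minimum). The uniformity of all $\asymp$ constants in $A$ must be tracked throughout, but each such constant depends only on the fixed data $X^*,q,t$.
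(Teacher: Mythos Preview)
Your proof is correct and follows essentially the same approach as the paper: both arguments decompose $V(Y)$, $V(X_A)$, and $\prod|y_r|$ according to the block splitting $X_A=X^*\sqcup X^0_A$, $Y=Y^*\sqcup Y^0_A$, estimate each piece separately, and combine. The only cosmetic difference is that the paper estimates $V(Y)$ and $V(X_A)$ individually (absorbing the $\asymp 1$ cross terms with $X^*$ tacitly), whereas you write the ratio $V(Y)/V(X_A)$ as a product of three blocks from the outset; the computations and the final bound are identical, including the step where the factor $|y^*_k-y^*_{k+1}|\asymp q^{\min(m',m'')}$ is discarded via $q^{\min(m',m'')}\le 1$.
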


\begin{proof}
Indeed,
$$
\prod_{i=1}^{N-1}|y_r|\asymp q^{m'+m''+A(d-1)},
$$
$$
V(X_A)\asymp V(X^0_A)\asymp q^{Ad(d-1)/2},
$$
\begin{multline*}
V(Y)\asymp V(Y^0_A)\cdot(y_k-y_{k+d})\cdot \prod_{y\in\Y^0_A}(y-y_{k+d})\cdot\prod_{y\in Y^0_A}(y_k-y)\\
\asymp q^{A(d-1)(d-2)/2}\cdot(x^+q^{m'}+|x^-|q^{m''})\cdot \prod_{i=1}^{d-1}(x^+t^i q^A+|x^-|q^{m''})\cdot q^{m'(d-1)}\\ \lesssim q^{A(d-1)(d-2)/2}\cdot q^{(d-1)\min(m'',A)}\cdot q^{m'(d-1)}.
\end{multline*}
This implies the desired bound.
\end{proof}

From Corollary \ref{cor6.A} and Lemma \ref{lemma6.C} we obtain
$$
\LaN(X_A,Y)\le\const\, t^{dm'+d\min(m'',A)} q^{m''-\min(m'',A)}\le \const\, t^{dm'+\min(m'',A)} q^{m''-\min(m'',A)} ,
$$
because $d\ge1$.

It remains to check the inequality
$$
t^{\min(m'',A)} q^{m''-\min(m'',A)}\le (\max(q,t))^{m''}.
$$

If  $m''\le A$, then it turns into 
$$
t^{m''}\le (\max(q,t))^{m''},
$$
which is obvious. 

Finally, suppose  $m''>A$. Then the inequality takes the form 
$$
t^A q^{m''-A}\le (\max(q,t))^{m''}.
$$

If $t\le q$, then this means $t^A q^{m''-A}\le q^{m''}$ or else $t^A q^{-A}\le1$, which holds true (because $t\le q$). 

If $t>q$, then this means $t^A q^{m''-A}\le t^{m''}$ or else $q^{m''-A}\le t^{m''-A}$, which also holds true (because $m''-A>0$ and $t>q$). 

This completes the proof of Proposition \ref{prop6.C}.

\subsection{Proof of Proposition \ref{prop6.D}}

Now we reexamine formula \eqref{eq6.E} assuming $Y\in S^+_A$. 

According to \eqref{eq6.F}, we have  $k+d+1-\eps(k+d)=k+d+1$. Because $x_{k+d+1}=x^-$, the product over $j$ in \eqref{eq6.E} takes the form
\begin{multline}\label{eq6.I}
\prod_{j\ne k+d+1}\frac{(x^-t/x_j;q)_\infty}{(x^-q/x_j;q)_\infty}\asymp \prod_{x\in X^0_A}\frac{(x^-t/x;q)_\infty}{(x^-q/x;q)_\infty}\\
=\prod_{i=1}^d \frac{((x^-/x^+)t^{1-i}q^{-A};q)_\infty}{((x^-/x^+)t^{-i}q^{1-A};q)_\infty}\asymp\left(\frac tq\right)^{dA},
\end{multline}
where the last step is justified by the following lemma, which we apply for $w:=-(x^-/x^+)t^{1-i}$,

\begin{lemma}\label{lemma6.D}
Let $w>0$ be fixed. As $A\to+\infty$, 
$$
\frac{(-wq^{-A};q)_\infty}{(-wt^{-1}q^{1-A};q)_\infty}\asymp \left(\frac tq\right)^A.
$$
\end{lemma}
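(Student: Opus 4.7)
The plan is to use the standard factorization $(a;q)_\infty = (a;q)_A \cdot (aq^A;q)_\infty$ to separate the ``negative-index'' tail that grows with $A$ from a benign ``positive-index'' tail. Applied with $a = -wq^{-A}$ in the numerator and $a = -wt^{-1}q^{1-A}$ in the denominator, this yields
$$
\frac{(-wq^{-A};q)_\infty}{(-wt^{-1}q^{1-A};q)_\infty}
= \frac{(-w;q)_\infty}{(-wt^{-1}q;q)_\infty}\cdot
\frac{(-wq^{-A};q)_A}{(-wt^{-1}q^{1-A};q)_A}.
$$
The prefactor is a strictly positive constant independent of $A$ (both infinite products have strictly positive factors and converge), so it can be absorbed into the $\asymp$.

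Next, I would reindex the two finite products by $k = A-n$. This gives
$$
(-wq^{-A};q)_A = \prod_{k=1}^{A}(1+wq^{-k}), \qquad
(-wt^{-1}q^{1-A};q)_A = \prod_{k=1}^{A}(1+wt^{-1}q^{1-k}),
$$
and then I extract the leading power by writing $1+wq^{-k} = q^{-k}(q^k+w)$ and $1+wt^{-1}q^{1-k} = t^{-1}q^{1-k}(tq^{k-1}+w)$. The $q^{-k}/(t^{-1}q^{1-k}) = t/q$ contributions telescope to produce the desired exponential factor:
$$
\prod_{k=1}^{A}\frac{1+wq^{-k}}{1+wt^{-1}q^{1-k}}
= \left(\frac{t}{q}\right)^{A}\prod_{k=1}^{A}\frac{q^k+w}{tq^{k-1}+w}.
$$

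Finally, I would argue that the residual product is bounded between two positive constants uniformly in $A$. Since $w>0$ and $0<q,t<1$, every factor is strictly positive, and a direct computation gives
$$
\frac{q^k+w}{tq^{k-1}+w} = 1+\frac{q^{k-1}(q-t)}{tq^{k-1}+w} = 1+O(q^k),
$$
so the infinite product $\prod_{k=1}^\infty(q^k+w)/(tq^{k-1}+w)$ converges to a strictly positive, finite limit and its partial products are bounded away from $0$ and $\infty$. Combining the three steps gives exactly $\asymp(t/q)^A$. I do not anticipate a real obstacle here: the entire argument is a routine telescoping once one makes the Pochhammer split, and the only item requiring a bit of care is keeping track of the positivity of each factor, which is automatic from $w>0$.
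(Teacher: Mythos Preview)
Your proof is correct and follows essentially the same route as the paper: the same split $(a;q)_\infty=(a;q)_A\,(aq^A;q)_\infty$ isolates a fixed positive constant, and the remaining ratio of finite Pochhammer symbols is handled by the same telescoping that extracts $(t/q)^A$ with a uniformly bounded residual product. The only cosmetic difference is that the paper packages your finite-ratio computation as an appeal to the earlier Lemma~\ref{lemma6.B} with $m=A$, whereas you redo that step in place.
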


\begin{proof}
We have
$$
\frac{(-wq^{-A};q)_\infty}{(-wt^{-1}q^{1-A};q)_\infty}=\frac{(-wq^{-A};q)_A}{(-wt^{-1}q^{1-A};q)_A} \, \frac{(-w;q)_\infty}{(-wt^{-1}q;q)_\infty}=\const\,\frac{(-wq^{-A};q)_A}{(-wt^{-1}q^{1-A};q)_A} \asymp \left(\frac tq\right)^A,
$$
where the last step follows from Lemma \ref{lemma6.B} in which we take $m=A$. 
\end{proof}

Now we turn to the double product over $(r,s)$ in \eqref{eq6.E}. Again, its relevant part is \eqref{eq6.G}.  We still have $\eps(k)=0$, but now $\eps(k+d)$ equals $0$ (not $1$, as above). The result is that
$$
k+\eps(k)=k, \qquad k+d+\eps(k+d)=k+d,
$$
and hence
$$
x_{k+\eps(k)}=x_k=x^+, \qquad x_{k+d+\eps(k+d)}=x_{k+d}\in X^0_A.
$$ 
It follows that the part of \eqref{eq6.G} related to $r=k+d$ is uniformly bounded, hence we are left with 
\begin{equation}\label{eq6.J}
 \prod_{\substack{s=1,\dots,N\\ s\ne k}}\frac{(x_k t/x_s;q)_{m_k}}{(x_k q/x_s;q)_{m_k}}\asymp  \prod_{x\in X^0_A}\frac{(x^+ t/x;q)_{m'}}{(x^+q/x;q)_{m'}}=\prod_{i=1}^d\frac{(t^{1-i}q^{-A};q)_{m'}}{(t^{-i}q^{1-A};q)_{m'}}\asymp \left(\frac tq\right)^{dm'},
\end{equation}
where the last step is justified with the help of Lemma \ref{lemma6.A}.

Combining \eqref{eq6.I} and \eqref{eq6.J} we obtain:

\begin{corollary}\label{cor6.B}
The expression in the second line of \eqref{eq6.E}  is \; $
\asymp \left(\dfrac tq\right)^{dm' +dA}.
$
\end{corollary}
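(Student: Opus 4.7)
The plan is simply to combine the two asymptotic estimates \eqref{eq6.I} and \eqref{eq6.J} already derived in the preceding lines of the proof of Proposition \ref{prop6.D}. The expression occupying the second line of \eqref{eq6.E} factors as the product of the double product over $(r,s)$ and the single product over $j$; these are precisely the two quantities whose asymptotics were computed in \eqref{eq6.J} and \eqref{eq6.I} respectively, so the corollary reduces to multiplying them.

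First I would invoke \eqref{eq6.I}: the product over $j$ is $\asymp (t/q)^{dA}$. Here only the $d$ factors with $x_j \in X^0_A$ carry the asymptotic weight (handled by Lemma \ref{lemma6.D} applied with $w = -(x^-/x^+)t^{1-i}$ for $i=1,\dots,d$); all other factors involve pairs of fixed points of $X^*$ and therefore reduce to constants bounded away from both $0$ and $\infty$ uniformly in $A$. Second I would invoke \eqref{eq6.J}: the $r=k$ slice of the double product is $\asymp (t/q)^{dm'}$ via Lemma \ref{lemma6.A} applied with $u = t^{1-i}$ for $i=1,\dots,d$. The remaining slices of the double product contribute uniformly bounded factors: for $r \notin \{k,k+d\}$ this is because the corresponding $m_r$ are bounded independently of $A$, and for $r = k+d$ it is because $x_{k+d+\eps(k+d)} = x_{k+d}$ lies in the shrinking set $X^0_A$, so the $q$-Pochhammer ratios involving it remain controlled.

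Multiplying the two asymptotics gives $(t/q)^{dm'+dA}$, which is the claim. There is no genuine obstacle here: the substantive analytic work—namely the estimation of $q$-Pochhammer ratios of the form $(uq^{-A};q)_m / (ut^{-1}q^{1-A};q)_m$ via Lemmas \ref{lemma6.A} and \ref{lemma6.D}—has already been carried out. The only point requiring a brief verification is that all factors dismissed as bounded are in fact bounded above and below by constants independent of $A$; this holds because every such factor depends only on the fixed configuration $X^*$ (its dependence on the shrinking $X^0_A$ contributes only through the explicitly tracked $d$-fold products). The corollary is thus a bookkeeping consolidation that isolates the net $m'$- and $A$-dependence of the second line of \eqref{eq6.E}, which will then be multiplied with the prefactors $V(Y)/V(X_A)$, $\prod |y_r|$, and $\prod (q^{m_r+1};q)_\infty/(q^{m_r}t;q)_\infty$ from the first line to finish the proof of Proposition \ref{prop6.D}.
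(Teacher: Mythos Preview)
Your proposal is correct and takes essentially the same approach as the paper: the proof there is the single line ``Combining \eqref{eq6.I} and \eqref{eq6.J} we obtain,'' and you have simply unpacked what that combination entails. Your additional remarks on why the discarded factors are bounded both above and below (so that $\asymp$ rather than merely $\lesssim$ holds) are a helpful elaboration of points the paper leaves implicit.
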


The next lemma is an analogue of Lemma \ref{lemma6.C}:

\begin{lemma}\label{lemma6.E}
Under the assumption that $Y\in S^+_A$, we have 
$$
\frac{V(Y)}{V(X_A)}\prod_{r=1}^{N-1}|y_r|\asymp q^{(d+1)m'+m''+dA} .
$$
\end{lemma}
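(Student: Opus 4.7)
The plan is to analyze the three factors $\prod_{r=1}^{N-1}|y_r|$, $V(X_A)$, and $V(Y)$ separately, keeping track only of the $A$-, $m'$-, $m''$-dependence up to multiplicative constants (recall that $q,t,x^+,x^-$ and the fixed coordinates $x^*_i$ and $y^*_i$ for $i\neq k,k+1$ are constants).

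First I would dispose of the easy pieces. From the explicit enumeration
$$
Y=(y^*_1,\dots,y^*_{k-1};\; x^+q^{m'};\; x^+q^At,\dots,x^+q^At^{d-1};\; x^+t^dq^{m''+A};\; y^*_{k+2},\dots,y^*_n),
$$
only three blocks carry $A$-, $m'$-, or $m''$-dependence, so
$$
\prod_{r=1}^{N-1}|y_r|\asymp q^{m'}\cdot q^{A(d-1)}\cdot q^{m''+A}=q^{m'+m''+Ad}.
$$
For $V(X_A)$, the same reasoning as in the proof of Lemma \ref{lemma6.C} gives $V(X_A)\asymp V(X^0_A)\asymp q^{Ad(d-1)/2}$, since the differences $x_i-x_j$ with at most one of $x_i,x_j$ in $X^0_A$ converge to nonzero constants as $A\to\infty$.

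The main step is the evaluation of $V(Y)$. I would split its factors into four groups:
\begin{itemize}
\item The $\binom{d-1}{2}$ factors among points of $Y^0_A$ give $V(Y^0_A)\asymp q^{A(d-1)(d-2)/2}$.
\item The $d-1$ factors $y_k-y$ with $y=x^+q^At^i\in Y^0_A$: since $m'\le A$ and $0<t<1$ we have $y_k=x^+q^{m'}>x^+q^A>y$, hence $y_k-y=x^+q^{m'}(1-q^{A-m'}t^i)\asymp q^{m'}$, producing $q^{m'(d-1)}$ in total.
\item The $d-1$ factors $y-y^*_{k+1}$ with $y\in Y^0_A$: each equals $x^+q^At^i(1-t^{d-i}q^{m''})$, and since $0<t^{d-i}q^{m''}\le t<1$ uniformly, we get $\asymp q^{A(d-1)}$.
\item The single factor $y_k-y^*_{k+1}=x^+q^{m'}(1-t^dq^{m''+A-m'})$. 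Because $m'\le A$ and $m''\ge 0$, the quantity $t^dq^{m''+A-m'}$ lies in $(0,t^d]$, so $1-t^dq^{m''+A-m'}\in[1-t^d,1]$ is bounded away from $0$, giving $\asymp q^{m'}$.
\item All remaining factors (involving the fixed coordinates $y^*_1,\dots,y^*_{k-1}$, $y^*_{k+2},\dots,y^*_n$) are bounded above and away from $0$ uniformly in $A,m',m''$, since the $A$-dependent points of $Y$ all remain in a bounded set, and their limits as $A\to\infty$ (namely $0$ and $x^+q^{m'}$) stay bounded away from the fixed points.
\end{itemize}
Multiplying these contributions yields
$$
V(Y)\asymp q^{A(d-1)(d-2)/2+m'(d-1)+A(d-1)+m'}=q^{Ad(d-1)/2+dm'}.
$$
Combining with $V(X_A)\asymp q^{Ad(d-1)/2}$ gives $V(Y)/V(X_A)\asymp q^{dm'}$, and multiplying by $\prod|y_r|\asymp q^{m'+m''+Ad}$ produces the asserted asymptotic $q^{(d+1)m'+m''+dA}$.

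The only delicate step is the estimate of the factor $y_k-y^*_{k+1}$: one must invoke the constraint $m'\le A$ (which holds because $y^*_k\in\{x^+,x^+q,\dots,x^+q^A\}$) to ensure that $t^dq^{m''+A-m'}$ stays bounded away from $1$, so that this factor really contributes $\asymp q^{m'}$ rather than something smaller. Everything else is a straightforward enumeration of dominant contributions.
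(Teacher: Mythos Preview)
Your proof is correct and follows the same approach as the paper: separately estimate $\prod|y_r|$, $V(X_A)$, and $V(Y)$, splitting the Vandermonde $V(Y)$ into the contributions from $Y^0_A$, the cross terms with $y_k$ and $y^*_{k+1}$, and the remaining fixed coordinates. Your version is in fact more careful than the paper's, since you explicitly verify the uniformity in $m'$ and $m''$ of the $\asymp$ constants (in particular for the factor $y_k-y^*_{k+1}$ via the constraint $m'\le A$), which the paper leaves implicit but does need for the subsequent summation in Claim~2.
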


\begin{proof}
Indeed,
$$
\prod_{i=1}^{N-1}|y_r|\asymp q^{m'+(m''+A)+A(d-1)},
$$
$$
V(X_A)\asymp V(X^0_A)\asymp q^{Ad(d-1)/2},
$$
\begin{multline*}
V(Y)\asymp V(Y^0_A)(y_k-y_{k+d})\cdot \prod_{y\in\Y^0_A}(y-y_{k+d})\cdot\prod_{y\in Y^0_A}(y_k-y)\\
\asymp q^{A(d-1)(d-2)/2}\cdot q^{m'}\cdot q^{A(d-1)}\cdot q^{m'(d-1)}
\end{multline*}
This implies the desired bound.
\end{proof}

Corollary \ref{cor6.B} and Lemma \ref{lemma6.E} together imply Proposition \ref{prop6.D}.

\subsection{End of proof of Theorem \ref{thm6.A}}\label{sect6.endproof}
We have finished the proof of the theorem in the case when $0<k<n$. It remains to examine the two extreme cases $k=n$ and $k=0$. By symmetry, they are equivalent, so we examine only the case $k=n$. Then all points of $X_A$ and $Y$ are on the right of $0$. We have $N=k+d$,
$$
X_A=(x_1,\dots,x_N)=(x^*_1,\dots,x^*_k;\;  x^*_k q^A t, \, x^*_k q^A t^2,\dots, x^*_k q^At^d),
$$
$$
Y=(y_1,\dots,y_{N-1})=(y^*_1,\dots,y^*_k;\;  x^*_k q^A t, \, x^*_k q^A t^2,\dots, x^*_k q^A t^{d-1}),
$$
and
$$
\eps(1)=\dots=\eps(k+d-1)=0.
$$

In the configuration $Y$, each of the points except $y_k=y^*_k$ is either fixed or ranges over a fixed finite set. As for $y_k$, it may take the values of the form $x^*_kt q^m$, where $0\le m\le A-1$. 

In the present situation the theorem reduces to the following claim: 
$$
\lim_{B\to+\infty}\sum_{Y:\, m\ge B}\LaN(X_A,Y)=0 \quad \text{uniformly on $A$}.
$$

But this follows from the next proposition, which is a simplified version of Propositions \ref{prop6.C} and \ref{prop6.D}:

\begin{proposition}\label{prop6.E}
In the case $k=n$ we have
$$
\LaN(X_A,Y)\asymp t^{dm}.
$$
\end{proposition}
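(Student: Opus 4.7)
\medskip

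\noindent\textbf{Proof plan for Proposition \ref{prop6.E}.}

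The plan is to specialize the general formula \eqref{eq6.E} (which is \eqref{eq5.R} applied to $X_A$) to the case $k = n$ and track the $A$- and $m$-dependence of each factor, mirroring the argument used for Propositions \ref{prop6.C} and \ref{prop6.D} but in a simplified setting: here all points are positive, there is no ``$m''$'' parameter, and the parameter $k(X_A)$ equals $N$, so the last product in \eqref{eq5.R} disappears entirely (see the remark after that formula). Also, since $k(X_A) = N$, we have $\varepsilon(r) = 0$ for every $r = 1, \dots, N-1$, so $x_{r+\varepsilon(r)} = x_r$ everywhere, and the only index $r$ for which $m_r$ is not bounded uniformly in $A$ is $r = k$ with $m_k = m$.

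First I would estimate the Vandermonde-type prefactor $\dfrac{V(Y)}{V(X_A)} \prod_{r=1}^{N-1} |y_r|$. Writing $X_A = X^* \cup X^0_A$ and $Y = Y^* \cup Y^0_A$ (where $Y^0_A$ has $d-1$ points while $X^0_A$ has $d$ points), the dominant contributions are $V(X^0_A) \asymp q^{Ad(d-1)/2}$, $V(Y^0_A) \asymp q^{A(d-1)(d-2)/2}$, $\prod_{y \in Y^0_A}|y| \asymp q^{A(d-1)}$, $|y^*_k| \asymp q^m$, and $\prod_{i=1}^{d-1}(y^*_k - x^*_k q^A t^i) \asymp q^{m(d-1)}$ (the last estimate following from the fact that $y^*_k = x^*_k t q^m$ with $0 \le m \le A-1$ dominates each $x^*_k q^A t^i$). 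All factors involving interactions with the fixed configuration $X^*$ stay bounded away from $0$ and $\infty$. A straightforward exponent count then yields
\[
\frac{V(Y)}{V(X_A)} \prod_{r=1}^{N-1} |y_r| \asymp q^{md}.
\]

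Next I would estimate the double $q$-Pochhammer product in \eqref{eq6.E}. The only summands where $m_r$ can grow are those with $r = k$, and in all other terms the fractions $(x_{r+\varepsilon(r)}t/x_s;q)_{m_r}/(x_{r+\varepsilon(r)}q/x_s;q)_{m_r}$ are uniformly bounded above and below (this is the same observation as in the proof of Proposition \ref{prop6.C}). Similarly, the fractions with $r = k$ but $s$ outside $X^0_A$ contribute a bounded factor. What survives is
\[
\prod_{x \in X^0_A}\frac{(x^*_k t/x; q)_m}{(x^*_k q/x; q)_m} = \prod_{i=1}^{d}\frac{(t^{1-i}q^{-A};q)_m}{(t^{-i}q^{1-A};q)_m} \asymp (t/q)^{dm},
\]
where the last step is a direct application of Lemma \ref{lemma6.A} with $u = t^{1-i}$, valid since $m \le A-1 < A$. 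The remaining prefactor $\prod_r (q^{m_r+1};q)_\infty/(q^{m_r}t;q)_\infty$ is uniformly bounded above and below in $m$ and $A$.

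Multiplying the two estimates gives $\LaN(X_A, Y) \asymp q^{md} \cdot (t/q)^{dm} = t^{dm}$, as claimed. The main (minor) obstacle is merely bookkeeping in the Vandermonde estimate, ensuring that the cross-terms between $X^*$ and the shrinking cluster $X^0_A$ are all $\asymp 1$; once this is cleanly separated out, the proof is a direct specialization of the analysis already carried out for Propositions \ref{prop6.C} and \ref{prop6.D}.
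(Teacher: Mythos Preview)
Your proposal is correct and follows essentially the same approach as the paper's own proof. The paper's argument is more terse---it simply notes that the product over $j$ disappears, states $\frac{V(Y)}{V(X_A)}\prod|y_r|\asymp q^{dm}$ by analogy with Lemma \ref{lemma6.E}, and then invokes Lemma \ref{lemma6.A} to get the $(t/q)^{dm}$ factor from the double Pochhammer product---while you have spelled out the Vandermonde bookkeeping in more detail; the structure and key lemmas are identical.
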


\begin{proof}
We turn again to formula \eqref{eq6.E}. The product over $j$ now disappears and the only relevant part has the form
\begin{equation*}
\frac{V(Y)}{V(X_A)}\cdot \prod_{r=1}^{N-1}|y_r|\cdot \prod_{r=1}^{N-1} \prod_{\substack{s=1,\dots,N\\ s\ne r}}\frac{(x_rt/x_s;q)_{m_r}}{(x_r q/x_s;q)_{m_r}},
\end{equation*}

Arguing as in Lemma \ref{lemma6.E} we obtain
$$
\frac{V(Y)}{V(X_A)}\cdot \prod_{r=1}^{N-1}|y_r|\asymp q^{dm}.
$$

Next, the product over $(r,s)$ is handled with the aid of Lemma \ref{lemma6.A}, and the result is \; $\asymp\left(\dfrac tq\right)^{dm}$. 

These two estimates yield the desired result.
\end{proof}

This completes the proof of Theorem \ref{thm6.A}.

\section{Boundaries of projective chains:  general facts}\label{sect7}

\subsection{The boundary}\label{sect7.1}
Recall (section \ref{sect1.1}) that a \emph{projective chain} $\{S_N, L^N_{N-1}\}$ consists of an infinite sequence $S_1,S_2,\dots$ of countable sets linked by stochastic matrices  $L^N_{N-1}$ of format $S_N\times S_{N-1}$, where $N=2,3,\dots$\,. 

Recall that the symbol $\P(\ccdot)$ denotes the set of probability measures on a given measurable space. The matrix  $L^N_{N-1}$ determines a map $\P(S_N)\to\P(S_{N-1})$, which we write as  $M\mapsto M L^N_{N-1}$ (here any measure $M\in\mathscr P(S_N)$ is interpreted as a row-vector whose coordinates are indexed by $S_N$). 

The maps $\P(S_N)\to\P(S_{N-1})$ allow us to form the projective limit space $\varprojlim \P(S_N)$. By the very definition, an element of this space is an infinite sequence $\{M_N\in \P(S_N): N=1,2,\dots\}$ with the property that $M_N L^N_{N-1}=M_{N-1}$ for all $N\ge2$; such sequences are called \emph{coherent systems}. 

In what follows we assume that the space $\varprojlim\P(S_N)$ is nonempty. It possesses a natural structure of a convex set, which  gives sense to the following definition.

\begin{definition}\label{def7.A}
By the \emph{boundary} of a projective  chain $\{S_N, L^N_{N-1}: N=2,3,\dots\}$ we mean the set $S_\infty:=\operatorname{Ex}(\varprojlim\P(S_N))$ of extreme points of $\varprojlim\P(S_N)$.
\end{definition}

Given $X\in S_\infty$, we denote by $M^{(X)}=\{M^{(X)}_N\}$ the coherent system represented by $X$. 
In the next theorem we use the natural Borel structure on $\varprojlim\P(S_N)$ generated by the cylinder sets.    

\begin{theorem}\label{thm7.A}
The set\/ $S_\infty$ is a Borel subset of the space\/ $\varprojlim\P(S_N)$, so that we may form the space $\P(S_\infty)$ of probability Borel measures on\/ $S_\infty$.

For every coherent system $M=\{M_N\}$ there exists a unique measure $\sigma\in\P(S_\infty)$ such that $M=\int_{S_\infty} M^{(X)}\sigma(dX)$ in the sense that
$$
M_N(Y)=\int_{S_\infty} M^{(X)}_N(Y)\sigma(dX) \qquad \text{\rm for every $N=1,2,\dots$ and every $Y\in\Om_N$.}
$$

Conversely, every measure $\sigma\in\P(S_\infty)$ generates in this way a coherent system, so that we obtain a bijection  $\P(S_\infty)\leftrightarrow\varprojlim\P(S_N)$.
\end{theorem}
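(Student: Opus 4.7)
The plan is to cast the statement as a Choquet-type ergodic decomposition theorem for backward non-stationary Markov chains, in the framework of Dynkin \cite{Dynkin} and Winkler \cite[ch.~4]{Winkler}.

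First I would equip each countable $S_N$ with the discrete topology, making $\P(S_N)$ Polish in the weak topology (equivalently, in pointwise convergence). Each map $M\mapsto ML^N_{N-1}:\P(S_N)\to\P(S_{N-1})$ is then continuous and affine, so $\varprojlim\P(S_N)$ is a closed convex subset of the Polish product $\prod_N\P(S_N)$; in particular it is itself a Polish convex space, whose Borel structure agrees with the cylinder Borel structure of the theorem. The set of extreme points of a convex Polish subset can be shown to be a $G_\delta$ by the standard argument, hence Borel, so $\P(S_\infty)$ is well-defined.

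Next I would transfer the problem to the path space $\Om^\#:=\prod_{N\ge 1} S_N$. The coherency relations $M_N L^N_{N-1}=M_{N-1}$ are precisely the Kolmogorov consistency conditions for the family
\begin{equation*}
\PP(Y_1=s_1,\dots,Y_N=s_N) \;:=\; M_N(s_N)\,L^N_{N-1}(s_N,s_{N-1})\cdots L^2_1(s_2,s_1),
\end{equation*}
so that coherent systems correspond bijectively and affinely to those probability measures $\PP$ on $\Om^\#$ under which the time-reversed process $(Y_N)_{N\ge 1}$ is a non-stationary Markov chain with transitions $L^N_{N-1}$. A standard argument in this setting identifies the extreme such $\PP$ with those whose tail $\sigma$-algebra $\mathcal{T}:=\bigcap_{N}\sigma(Y_N,Y_{N+1},\dots)$ is $\PP$-trivial, thus parametrizing $S_\infty$ by the tail-ergodic backward-Markov measures.

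With this in place, I would invoke the ergodic decomposition theorem for $\mathcal{T}$: every such $\PP$ decomposes uniquely as $\PP=\int\PP^{(X)}\,\sigma(dX)$ over tail-ergodic $\PP^{(X)}$ indexed by $X\in S_\infty$. Taking $N$-th marginals gives the stated integral formula $M_N(Y)=\int_{S_\infty}M^{(X)}_N(Y)\,\sigma(dX)$, and conversely Fubini's theorem guarantees that any $\sigma\in\P(S_\infty)$ yields a coherent system via the same formula, establishing the bijection $\P(S_\infty)\leftrightarrow\varprojlim\P(S_N)$. Uniqueness of $\sigma$ is equivalent to $\varprojlim\P(S_N)$ being a Choquet simplex, and follows from the a.s.-uniqueness of the tail ergodic decomposition.

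The main obstacle is the measurability bookkeeping: one needs joint Borel-measurability of $(X,Y)\mapsto M^{(X)}_N(Y)$ in order for the integrals to be well defined. This is automatic since $X\mapsto M^{(X)}_N$ is simply the $N$-th coordinate projection of the Polish space $\varprojlim\P(S_N)$ onto $\P(S_N)$, but spelling out the identification of extreme coherent systems with tail-ergodic measures on $\Om^\#$ requires some care, and is where I would lean most heavily on the expositions in \cite{Dynkin} and \cite{Winkler}.
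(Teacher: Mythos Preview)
The paper does not actually prove this theorem: its entire proof is the one-line citation ``See Olshanski \cite[Theorem 9.2]{Ols-2016}.'' Your sketch is essentially the argument that lies behind that citation --- the Dynkin/Winkler entrance-boundary formalism, realized via the path space and the tail $\sigma$-algebra --- so in substance you and the paper are aligned.

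One step in your outline needs more care. You assert that ``the set of extreme points of a convex Polish subset can be shown to be a $G_\delta$ by the standard argument.'' The standard argument (a point is non-extreme iff it is the midpoint of a chord of length $\ge 1/n$ for some $n$, and for each $n$ this set is closed) relies on compactness: one extracts convergent subsequences of the endpoints. Here each $S_N$ is merely countable, so $\P(S_N)$ is not compact, and neither is $\varprojlim\P(S_N)$; the closedness of those midpoint sets is no longer automatic. In the non-compact setting the Borel measurability of $S_\infty$ is precisely one of the nontrivial points handled by the Dynkin/Winkler machinery --- it comes from the identification of extreme coherent systems with tail-trivial measures on the path space, not from a bare topological $G_\delta$ argument. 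Since you already intend to invoke \cite{Dynkin} and \cite{Winkler} for the ergodic decomposition, this is not a fatal gap, but you should route the Borel claim through that same machinery rather than present it as an elementary fact about convex Polish sets.
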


\begin{proof} See Olshanski \cite[Theorem 9.2]{Ols-2016}.
\end{proof}

For later use it is convenient to slightly reformulate this result. Let us introduce the alternative notation
$$
L^\infty_N(X,Y):=M^{(X)}_N(Y), \qquad Y\in S_N.
$$
We may regard $L^\infty_N$ as a Markov kernel: this simply means that $L^\infty_N(X,\ccdot)$ is a probability measure on $S_N$ for any fixed $X\in S_\infty$, and the function $X\mapsto L^\infty_N(X,Y)$ is a Borel measurable function on $S_\infty$ for any fixed $Y\in S_N$. Next, we rename $\si$ by $M_\infty$. In this notation, Theorem \ref{thm7.A} claims that there is a one-to-one correspondence $\{M_N\}\leftrightarrow M_\infty$ between coherent systems and probability measures on $S_\infty$ given by
$$
M_N=M_\infty L^\infty_N, \qquad N=1,2,3,\dots,
$$
or, in more detail,
\begin{equation}\label{eq7.C}
M_N(Y)=\int_{S_\infty} M_\infty(dX) L^\infty_N(X,Y), \qquad N=1,2,3,\dots, \quad Y\in S_N.
\end{equation}

We call $M_\infty$ the \emph{boundary measure} of a given coherent system $\{M_N\}$. It is tempting to say that $M_\infty$ is the limit of the measures $M_N$ as $N\to\infty$.  One cannot do it in the abstract setting, because $M_\infty$ and the $M_N$'s live on distinct spaces. However, in a number of concrete models one can use their specific properties and deduce from Theorem \ref{thm7.A} that the $M_N$'s do converge to $M_\infty$ in some natural sense. In particular, this can be done in our case, see Theorem \ref{thm8.C} below.

\subsection{The path space}

Given two elements $X\in S_N$ and $Y\in S_{N-1}$, we write $X\triangleright Y$ or equivalently $Y\triangleleft X$ if the matrix entry $L^N_{N-1}(X,Y)$ is nonzero (hence strictly positive). 

A \emph{finite path} of length $N$ is a sequence $(X(1),\dots,X(N)$, where
$X(i)\in S_i$ for all $i=1,\dots,N$ and  $X(1)\triangleleft X(2)\triangleleft\dots\triangleleft X(N)$.  The set of all such paths will be denoted by $\Pi_N$.

Likewise, an \emph{infinite path} is an infinite sequence $(X(1)\triangleleft X(2)\triangleleft\dots)$, where $X(i)\in S_i$ for all $i=1,\dots$\,. The set of all such paths is denoted by $\Pi$ and called the \emph{path space}. 

For each $N\ge2$ there is a natural projection $\Pi_N\to\Pi_{N-1}$:
$$
(X(1),\dots,X(N-1),X(N))\;\mapsto\; (X(1),\dots,X(N-1)).
$$
Evidently, $\Pi$ is the projective limit of the sets $\Pi_N$ with respect to these projections. We equip $\Pi$ with the corresponding Borel structure. 

An \emph{elementary cylinder set of depth $N$} in $\Pi$ is the set of all infinite paths with a prescribed beginning $(X(1),\dots,X(N))$; let us denote such a set by $C(X(1),\dots,X(N))$. 

\begin{definition}
We say that a probability Borel measure $\mathscr M$ on $\Pi$ is a \emph{Gibbs measure} if the following condition holds. Let $N$ and $X\in S_N$ be arbitrary, and consider all elementary cylinder sets $C(X(1),\dots,X(N))$ with $X(N)=X$. Then we require that
\begin{multline}\label{eq7.A}
\mathscr M(C(X(1),\dots,X(N))=m(X)L^N_{N-1}(X(N),\,X(N-1))\\ 
\times L^{N-1}_{N-2}(X(N-1),\,X(N-2)) \dots L^2_1(X(2),\,X(1)),
\end{multline}
where $m(X)\ge0$ is a quantity that depends on $X$ only. (Cf. \cite[sect. 7.4]{BO-2017}.)
\end{definition}

\begin{proposition}\label{prop7.A}
There is a natural bijective correspondence $\mathscr M\leftrightarrow \{M_N\}$ between Gibbs measures and coherent systems.
\end{proposition}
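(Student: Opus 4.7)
The plan is to construct the correspondence in both directions explicitly and then to verify it is inverse.

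In one direction, starting from a Gibbs measure $\mathscr M$ on $\Pi$, define $M_N\in\P(S_N)$ as the pushforward of $\mathscr M$ under the coordinate projection $\Pi\to S_N$. Then for any $X\in S_N$,
$$
M_N(X)=\sum_{(X(1),\dots,X(N-1)):\, X(N-1)\triangleleft X}\mathscr M(C(X(1),\dots,X(N-1),X)).
$$
Plugging in the Gibbs formula \eqref{eq7.A} and using that the kernels $L^{i+1}_i$ are stochastic, the iterated sum over the ``past'' collapses to $1$, yielding $M_N(X)=m(X)$. To check coherence, I compute $M_{N-1}(Y)$ by instead grouping paths according to their value $X(N)=X$: the Gibbs formula together with summation of the initial segment gives $\mathscr M(\{X(N-1)=Y,\,X(N)=X\})=m(X)L^N_{N-1}(X,Y)$, whence
$$
M_{N-1}(Y)=\sum_X m(X)L^N_{N-1}(X,Y)=(M_N L^N_{N-1})(Y),
$$
so $\{M_N\}$ is coherent.

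In the opposite direction, given a coherent system $\{M_N\}$, I prescribe the values on elementary cylinders by
$$
\mathscr M(C(X(1),\dots,X(N))):=M_N(X(N))\prod_{i=1}^{N-1}L^{i+1}_i(X(i+1),X(i)),
$$
which matches the Gibbs condition with $m(X)=M_N(X)$. The only thing to verify is that these prescriptions are consistent under the natural projections $\Pi_N\to\Pi_{N-1}$, i.e.\ that summing $\mathscr M(C(X(1),\dots,X(N-1),X(N)))$ over the last coordinate $X(N)\triangleright X(N-1)$ yields $\mathscr M(C(X(1),\dots,X(N-1)))$. Factoring out $\prod_{i=1}^{N-2}L^{i+1}_i$, this reduces precisely to $\sum_{X(N)}M_N(X(N))L^N_{N-1}(X(N),X(N-1))=M_{N-1}(X(N-1))$, which is the coherence relation. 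By the Kolmogorov extension theorem (applicable since $\Pi$ is a projective limit of countable sets with compatible probability prescriptions on its algebra of cylinder sets), there is a unique Borel probability measure $\mathscr M$ on $\Pi$ extending these finite-dimensional distributions.

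Finally, I verify that the two constructions are inverse. If $\{M_N\}$ is built from $\mathscr M$ by pushforward and then $\mathscr M'$ is built back from $\{M_N\}$ by the cylinder prescription, the Gibbs property of $\mathscr M$ and the identification $m(X)=M_N(X)$ obtained above show that $\mathscr M$ and $\mathscr M'$ agree on all elementary cylinders, hence coincide. Conversely, if one starts from $\{M_N\}$, builds $\mathscr M$, and pushes forward, the cylinder formula immediately gives back the same $M_N$ at each level. There is no real obstacle here; the only mildly delicate point is invoking the Kolmogorov extension on the countable projective limit, which is standard.
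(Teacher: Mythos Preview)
Your proof is correct and follows essentially the same route as the paper's: define $M_N(X)=m(X)$ (which you obtain as the pushforward), verify coherence, and in the other direction build $\mathscr M$ from consistent cylinder data via an extension theorem (the paper cites Bochner's theorem where you cite Kolmogorov). Your version is slightly more detailed in that you explicitly check the two constructions are mutual inverses, which the paper leaves implicit.
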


\begin{proof}
Let $\mathscr M$ be a Gibbs measure. For each $N$,  we define a measure $M_N\in\P(S_N)$ by setting $M_N(X)=m(X)$ for $X\in S_N$, where $m(X)$ is taken from \eqref{eq7.A}. It is immediately checked the $M_N$ is a probability measure and the sequence $M_1,M_2,\dots$ is a coherent system.

Conversely, let $\{M_N\}$ be a coherent system. For each $N$, we define a probability measure $\mathscr M_N$ on $\Pi_N$ by setting  
\begin{multline*}
\mathscr M_N(C(X(1),\dots,X(N))=M_N(X(N))L^N_{N-1}(X(N),\, X(N-1))\\
\times  L^{N-1}_{N-2}(X(N-1), \,X(N-2))\dots L^2_1(X(2),\,X(1)),
\end{multline*}
where $(X(1),\dots,X(N))\in\Pi_N$. The measures obtained in this way are consistent with the projections $\Pi_N\to\Pi_{N-1}$. Hence, by Bochner's theorem (see Bochner \cite[Theorem 5.1.1]{Bochner} or Parthasarathy \cite[Ch. V]{Parth}), they give rise to a probability measure $\mathscr M$ on $\Pi$. By the very construction, it is a Gibbs measure. 
\end{proof}

Let $\pi=(X(N))$ and $\pi'=(X'(N))$ be two infinite paths; let us say that they are \emph{equivalent} (and then write $\pi\sim\pi'$) if  they have the same \emph{tail}, that is,  $X(N)=X'(N)$ for all $N$ large enough. Let $G$ be the group of all bijections $g:\Pi\to \Pi$ such that $g\pi\sim\pi$ for every path $\pi$ and $g\pi\ne\pi$ for finitely many paths $\pi$ only. This is a countable group of transformations of $\Pi$. Associated with the action of $G$ on $\Pi$ is a $1$-cocycle $c(g,\pi)$: if $\pi=(X(N))$ and $g\pi=(X'(N))$, then
$$
c(g,\pi):=\prod_{N=2}^\infty\frac{L^N_{N-1}(X'(N),X'(N-1))}{L^N_{N-1}(X(N),X(N-1))}.
$$
The product on the right is actually finite and hence is well defined. 

The notion of Gibbs measures can be reformulated as follows: these are precisely those probability measures $\mathscr M\in\P(\Pi)$ that are $G$-quasiinvariant and consistent with the cocycle $c(g,\pi)$, that is, for a test function $f$ on $\Pi$,
\begin{equation}\label{eq7.B}
\int_{\Pi}f(g^{-1}\pi)\mathscr M(d\pi)=\int_{\Pi}f(\pi)c(g,\pi)\mathscr M(d\pi) \qquad \forall g\in G.
\end{equation}

This fact is used in the next proposition. Before to state it, observe that the notion of Gibbs measures given above can be extended, in a natural way, to finite measures (not necessarily probability ones). Next, if $\mathscr M$ is a finite measure on $\Pi$ and $A\subset \Pi$ is a Borel subset, then we denote by $\mathscr M\big|_A$ the restriction of $\mathscr M$ to $A$, which we regard again as a measure on $\Pi$. 

\begin{proposition}\label{prop7.B}
Let $A\subset\Pi$ be a Borel subset, which is saturated with respect to the tail equivalence relation {\rm(}that is, $A$ consists of whole equivalence classes{\rm)}. If $\mathscr M$ is a Gibbs measure, then so is $\mathscr M\big|_A$.
\end{proposition}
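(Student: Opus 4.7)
\medskip

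\noindent\emph{Proof plan.} The cleanest route is to work with the cocycle reformulation \eqref{eq7.B} rather than with the original definition \eqref{eq7.A}: a finite Borel measure on $\Pi$ is Gibbs if and only if it is $G$-quasiinvariant and satisfies the cocycle identity
\[
\int_{\Pi} f(g^{-1}\pi)\,\mathscr M(d\pi)=\int_{\Pi} f(\pi)\,c(g,\pi)\,\mathscr M(d\pi), \qquad \forall g\in G,
\]
for every bounded Borel test function $f$. The point of the reformulation is that it is stable under multiplication of $\mathscr M$ by any $G$-invariant nonnegative Borel function.

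First, I would observe that the set $A$ is automatically $G$-invariant. Indeed, every element $g\in G$ satisfies $g\pi\sim\pi$ for every path $\pi$, so $g$ maps each tail-equivalence class into itself; since $A$ is a union of whole tail-equivalence classes by hypothesis, we have $gA=A$ for all $g\in G$. Equivalently, the indicator function $\one_A$ satisfies $\one_A(g^{-1}\pi)=\one_A(\pi)$ for all $g,\pi$.

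Next, I would apply the cocycle identity for $\mathscr M$ to the function $f\cdot\one_A$ in place of $f$. This yields
\[
\int_{\Pi} f(g^{-1}\pi)\,\one_A(g^{-1}\pi)\,\mathscr M(d\pi)=\int_{\Pi} f(\pi)\,\one_A(\pi)\,c(g,\pi)\,\mathscr M(d\pi).
\]
Using the $G$-invariance $\one_A(g^{-1}\pi)=\one_A(\pi)$ on the left, and rewriting integration against $\one_A\cdot\mathscr M$ as integration against $\mathscr M\big|_A$, the identity becomes
\[
\int_{\Pi} f(g^{-1}\pi)\,(\mathscr M\big|_A)(d\pi)=\int_{\Pi} f(\pi)\,c(g,\pi)\,(\mathscr M\big|_A)(d\pi),
\]
which is precisely the cocycle identity for $\mathscr M\big|_A$. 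Strict positivity of $c(g,\ccdot)$ then gives $G$-quasiinvariance of $\mathscr M\big|_A$ as a formal consequence (a null set for $\mathscr M\big|_A$ is sent by $g$ to a set of zero integral against $c(g,\ccdot)\,\mathscr M\big|_A$, hence to a null set). Thus $\mathscr M\big|_A$ is Gibbs.

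The argument is essentially bookkeeping, and there is no real obstacle. The only point that requires a moment's attention is to confirm that $A$ being a \emph{union of whole} tail-equivalence classes (and not merely a Borel set intersecting some classes) is what one needs in order to obtain $\one_A(g^{-1}\pi)=\one_A(\pi)$; once this is noted, passing from \eqref{eq7.B} for $\mathscr M$ to the same identity for $\mathscr M\big|_A$ is immediate by inserting $\one_A$ into the test function. If one preferred to verify the original definition \eqref{eq7.A} directly, one would have to exhibit a function $m_A(X)$ such that $\mathscr M\big|_A(C(X(1),\dots,X(N)))$ factors as $m_A(X(N))\prod_k L^k_{k-1}(X(k),X(k-1))$; this can be extracted from \eqref{eq7.A} for $\mathscr M$ by summing/integrating over the continuations of a given initial segment inside $A$, but the cocycle route avoids this bookkeeping entirely.
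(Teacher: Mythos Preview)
Your proof is correct and follows essentially the same route as the paper's: both use the cocycle reformulation \eqref{eq7.B}, observe that the saturation hypothesis makes $\one_A$ (equivalently $\chi_A$) $G$-invariant, and then obtain the Gibbs property for $\mathscr M\big|_A=\one_A\mathscr M$ by inserting the indicator into the identity. The paper's argument is just the one-line version of what you spelled out in detail.
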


\begin{proof} 
We have $\mathscr M\big|_A=\chi_A\mathscr M$, where $\chi_A$ denotes the characteristic function of $A$. Our assumption on $A$ means that $\chi_A$ is $G$-invariant. It follows that if $\mathscr M$ satisfies \eqref{eq7.B}, then so is $\chi_A\mathscr M$. This concludes the proof.
\end{proof}

\subsection{Decomposition on singular and nonsingular components}\label{sect7.3}
 
Now we apply the general formalism described above to two concrete projective chains, $\{\Om_N, \LaN\}$ and $\{\wt\Om_N, \wt\La^N_{N-1}\}$; the results are used below in the proof of Theorem \ref{thm8.B}.

Let $\Pi$ and $\wt\Pi$ denote the spaces of infinite paths for $\{\Om_N, \LaN\}$ and $\{\wt\Om_N, \wt\La^N_{N-1}\}$, respectively. Evidently, $\Pi$ is a subset of $\wt\Pi$.  Let us say that a path $\pi\in\wt\Pi$ is \emph{nonsingular} if it is contained in $\Pi\subset\wt\Pi$; otherwise it is called \emph{singular}. 

Write a path $\pi\in\wt\Pi$ as a sequence $\{X(N)\in\wt\Om_N: N=1,2,\dots\}$. In this notation, $\pi$ is nonsingular if and only if $X(N)\in\Om_N$ for each $N$; that is, $X(N)$ must contain exactly $N$ points. Therefore, $\pi$ is singular if this condition fails, that is, there exists and index $K$ such that $X(K)$ contains less than $K$ points, say, $k<K$ points. Then, as is seen from Theorem \ref{thm6.A}, for all $N>K$ the number of points in $X(N)$ is also equal to $k$. 

Thus, the nonsingular paths have the form
$$
X(1)\prec X(2)\prec \dots, \qquad X(N)\in\Om_N, \quad N=1,2,\dots,
$$
while the singular paths have the form
$$
X(1)\prec\dots \prec X(k)\prec\!\!\prec X(k+1)\prec\!\!\prec X(k+2)\prec\!\!\prec\dots, \qquad 
$$
where $X(N)\in\Om_k\subset\wt\Om_N$ for all $N\ge k$, with a certain $k$. Recall that the meaning of symbol $\prec\!\!\prec$ is explained in Definition \ref{def6.B}. 

From this description we obtain a \emph{stratification} of the space $\wt\Pi$: 
$$
\wt\Pi=\Pi\sqcup \bigsqcup_{k=0}^\infty \Pi_k,
$$
where $\Pi_k$ is formed by those paths $\{X(N): N=1,2,\dots\}$ for which $X(N)\in\Om_k$ for all $N\ge k$. 

\begin{lemma}\label{lemma7.A}
Each of the strata $\Pi, \Pi_0,\Pi_1,\dots$ is a saturated Borel subset.
\end{lemma}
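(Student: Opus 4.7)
The plan is to reduce both properties to a single structural fact about paths in $\wt\Pi$: the cardinality function $N\mapsto n(N):=|X(N)|$ eventually stabilizes, and its eventual value determines the stratum of $\pi=\{X(N)\}$. Once this is established, Borelness and saturation follow formally.

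\textbf{Step 1: Stabilization lemma.} I would prove that for any $\pi=\{X(N)\}\in\wt\Pi$, if $n(N_0)=n$ for some $N_0\ge n$, then $n(N)=n$ for every $N\ge n$. This is where Theorem \ref{thm6.A} does all the work. Going backward: if $X(N)\in\Om_n$ with $n<N$, then by Theorem \ref{thm6.A} the measure $\wt\La^N_{N-1}(X(N),\ccdot)$ is concentrated on $\Om_n$, so $X(N-1)\in\Om_n$; iterating yields $n(M)=n$ for $n\le M\le N$. Going forward: if $X(N+1)\in\Om_{N+1}$, the (non-extended) matrix $\La^{N+1}_N$ applies and the support of $\wt\La^{N+1}_N(X(N+1),\ccdot)$ sits inside $\Om_N$, forcing $n(N)=N$; so $n(N)=n<N$ precludes $X(N+1)\in\Om_{N+1}$, and then Theorem \ref{thm6.A} applied to $X(N+1)\in\Om_{n'}$ (with $n'=n(N+1)$) forces $n(N)=n'$, i.e.\ $n(N+1)=n$. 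Induction completes the forward direction.

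\textbf{Step 2: Tail characterization of strata.} From Step 1 I read off: $\pi\in\Pi$ iff $n(N)=N$ for all sufficiently large $N$, and $\pi\in\Pi_k$ iff $n(N)=k$ for all sufficiently large $N$. Each of these conditions depends only on the tail of $\pi$, so if $\pi\sim\pi'$ (i.e.\ $X(N)=X'(N)$ for all $N\ge N_0$), then $\pi$ and $\pi'$ belong to the same stratum. This gives saturation simultaneously for every stratum.

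\textbf{Step 3: Borel measurability.} The Borel structure on $\wt\Pi$ is the projective-limit one, so each coordinate map $\pi\mapsto X(N)$ is measurable into $\wt\Om_N$. Since $\wt\Om_N$ is countable and (by the remark preceding Lemma \ref{lemma5.uni}) carries the full power-set Borel structure, every subset of $\wt\Om_N$ is Borel. Hence
\[
\Pi=\bigcap_{N\ge 1}\{\pi:X(N)\in\Om_N\},\qquad \Pi_k=\bigcap_{N\ge k}\{\pi:X(N)\in\Om_k\},
\]
are countable intersections of cylinder-type Borel sets and are therefore Borel.

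The only real obstacle is the forward half of Step 1: Theorem \ref{thm6.A} is stated as a one-step descent from level $N$ to $N-1$, while propagation from $N$ to $N+1$ needs a contrapositive argument that rules out an upward jump in cardinality. Once one observes that $X(N+1)\in\Om_{N+1}$ would force $X(N)\in\Om_N$ by the (standard, non-extended) support description of $\La^{N+1}_N$, this obstacle dissolves and the rest of the proof is bookkeeping.
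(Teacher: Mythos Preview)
Your proof is correct and follows essentially the same approach as the paper. The stabilization argument in your Step~1 is exactly what the paper establishes in the paragraph of Section~\ref{sect7.3} immediately preceding the lemma (so in the paper's organization this is already available and the lemma's own proof is very short). The one minor difference is in the Borel argument: the paper observes that the single condition $X(k+1)\in\Om_k$ already characterizes $\Pi_k$ (by stabilization), so $\Pi_k$ is a countable \emph{union} of elementary cylinder sets of depth $k+1$, whereas you present $\Pi_k$ as a countable \emph{intersection} $\bigcap_{N\ge k}\{X(N)\in\Om_k\}$; both are immediate and equivalent once stabilization is in hand.
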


\begin{proof}
The fact that the strata are saturated follows directly from their definition. Next, for each $k=0,1,2,\dots$, the stratum $\Pi_k$ consists precisely of the paths $\pi=(X(N))$ with the property $X(k+1)\in\Om_k$.  It follows that $\Pi_k$ is the union of countably many elementary cylinder sets and hence a Borel set. This in turn implies that $\Pi$ is a Borel set, too. 
\end{proof}

\begin{corollary}\label{cor7.C}
Any extreme Gibbs measure on the space $\wt\Pi$ is concentrated on one of its strata $\Pi,\Pi_0,\Pi_1,\dots$\,. 
\end{corollary}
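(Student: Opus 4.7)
The plan is to combine the two preceding structural results, Lemma~\ref{lemma7.A} and Proposition~\ref{prop7.B}, with the standard dichotomy argument for extreme points of a convex cone of measures. The only real content is to verify that each stratum is a saturated Borel set (already done) and that its complement in $\wt\Pi$ is also such; once this is in hand, extremality forces one of the strata to carry the full mass.

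First I would fix an extreme Gibbs probability measure $\mathscr M$ on $\wt\Pi$ and pick an arbitrary stratum $A\in\{\Pi,\Pi_0,\Pi_1,\dots\}$. By Lemma~\ref{lemma7.A}, $A$ is a saturated Borel subset. Its complement $A^c$ is the disjoint union of the remaining strata, which is a countable union of saturated Borel sets, hence itself saturated Borel. Applying Proposition~\ref{prop7.B} to $A$ and to $A^c$, I obtain that $\mathscr M\big|_A$ and $\mathscr M\big|_{A^c}$ are both Gibbs measures (in the generalized sense of finite measures consistent with the cocycle $c(g,\pi)$).

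Next I would run the standard argument: set $p:=\mathscr M(A)$, so $1-p=\mathscr M(A^c)$, and decompose
\begin{equation*}
\mathscr M=p\cdot\frac{\mathscr M\big|_A}{p}+(1-p)\cdot\frac{\mathscr M\big|_{A^c}}{1-p},
\end{equation*}
assuming $0<p<1$. Both summands are then Gibbs \emph{probability} measures, supported on the disjoint sets $A$ and $A^c$ respectively, so they are distinct. This exhibits $\mathscr M$ as a nontrivial convex combination of two distinct Gibbs probability measures, contradicting the assumed extremality of $\mathscr M$. Therefore, for every stratum $A$ one has $\mathscr M(A)\in\{0,1\}$. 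Since the strata partition $\wt\Pi$ into countably many pieces and $\mathscr M$ is a probability measure, exactly one stratum must satisfy $\mathscr M(A)=1$, which is the desired conclusion.

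The only step that requires a moment's care is checking that $A^c$ is saturated and Borel (so that Proposition~\ref{prop7.B} applies to it as well); this is immediate from Lemma~\ref{lemma7.A} together with closure of the saturated Borel sets under countable unions. No other obstacle is anticipated, as this is a purely formal consequence of the Gibbs property being preserved by restriction to saturated Borel sets.
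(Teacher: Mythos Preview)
Your proof is correct and follows essentially the same approach as the paper: both combine Lemma~\ref{lemma7.A} and Proposition~\ref{prop7.B} to show that the restriction of a Gibbs measure to each stratum is again Gibbs, and then invoke extremality. The only cosmetic difference is that the paper decomposes $\mathscr M$ directly as the (countable) sum $\mathscr M_\infty+\mathscr M_0+\mathscr M_1+\dots$ of its restrictions to all strata and observes that an extreme measure must equal one of its components, whereas you run the dichotomy $A$ versus $A^c$ for each stratum separately; these are equivalent formulations of the same argument.
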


\begin{proof}
Let $\mathscr M$ be an arbitrary finite Gibbs measure on the path space $\wt\Pi$. Lemma \ref{lemma7.A} makes it possible to restrict $\mathscr M$ to any of the strata. Moreover, the resulting measure (denote it by $\mathscr M_\infty$ or $\mathscr M_k$) will be a Gibbs measure by virtue of Proposition \ref{prop7.B}.  We obtain the decomposition 
$$
\mathscr M=\mathscr M_\infty+\mathscr M_0+\mathscr M_1+\dots
$$
in which each all the components are Gibbs measures. In the case when $\mathscr M$ is an extreme probability Gibbs measure it must coincide with one of its components, all other components being equal to zero. This completes the proof. 
\end{proof}

\section{Proof of Theorems B and C}\label{sect8}

\subsection{The boundary of the chain $\{\wt\Om_N,\wt\La^N_{N-1}\}$} 

Let $\Sym$ denote the algebra of symmetric functions. Observe that for any configuration $X\in\wt\Om$ and any positive integer $k$, the sum $\sum_{x\in X}|x|^k$ is finite. It follows that for any $f\in\Sym$ and any $X\in\wt\Om$, the value of $f$ at $X$ makes sense: namely, we enumerate the points $x\in X$ in an arbitrary way and set
$$
f(X):=f(x_1,x_2,\dots), \quad \text{where} \quad X=(x_1,x_2,\dots)
$$
(we add infinitely many $0$'s if $X$ if finite). An important remark is that $f$ is a continuous function in the topology of the space $\wt\Om$. 
 
We denote by $P_\nu=P_\nu(x_1,x_2,\dots;q,t)$ the \emph{Macdonald symmetric function} with index $\nu\in\Y$ and parameters $q$ and $t$ (Macdonald \cite[Ch. VI, sect. 4]{M}). Its value at $X\in\wt\Om$ is denoted by $P_\nu(X;q,t)$. 

Given $X\in\wt\Om$ (see Definition \ref{def6.A} (ii)), we denote by $[X]$ the smallest closed interval of $\R$ containing all points of $X$.

\begin{theorem}\label{thm8.A}
The elements of the boundary of the chain\/ $\{\wt\Om_N,\wt\La^N_{N-1}\}$ can be para\-met\-rized by the configurations $X\in\wt\Om$. 

More precisely, to every $X\in\wt\Om$ there corresponds a coherent system $M^{(X)}=\{M^{(X)}_K: K=1,2,\dots\}$; here  the $K$th measure $M^{(X)}_K\in\P(\wt\Om_K)$ is concentrated on the compact set $\{Y\in\wt\Om_K: Y\subset [X]\}$ and is uniquely determined by the relations
\begin{equation}\label{eq8.B}
\sum_{Y\in\wt\Om_K}M^{(X)}_K(Y)\frac{P_{\nu\mid K}(Y;q,t)}{(t^K; q,t)_\nu}=P_\nu(X;q,t),
\end{equation}
where $\nu$ is an arbitrary partition with $\ell(\nu)\le K$. The coherent families $M^{(X)}$ are pairwise distinct and are precisely the extreme ones. 

Furthermore, the Borel structure on the boundary coincides with the Borel structure of the space $\wt\Om$ determined by its topology. 
\end{theorem}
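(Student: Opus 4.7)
The plan is to construct $M^{(X)}$ explicitly for each $X \in \wt\Om$, verify the characterization \eqref{eq8.B} and coherence via Theorem \ref{thm6.B}, and then derive completeness and extremality from the stratification of the path space established in Section \ref{sect7.3}.

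First I would construct $M^{(X)}$. Choose $X^{(N)} \in \wt\Om_N$ approaching $X$ in the topology of $\wt\Om$: for $X \in \Om_n$ finite, take $X^{(N)} = X$ viewed as an element of $\wt\Om_N$ for $N \ge n$; for $X \in \Om_\infty$, take truncations to the $N$ points closest to zero. Write $\wt\La^N_K := \wt\La^N_{N-1}\wt\La^{N-1}_{N-2}\cdots\wt\La^{K+1}_K$ for the composite kernel. Iterating Theorem \ref{thm6.B} yields
\begin{equation*}
\sum_{Y \in \wt\Om_K} \wt\La^N_K(X^{(N)}, Y)\,\frac{P_{\nu\mid K}(Y;q,t)}{(t^K;q,t)_\nu} = \frac{P_{\nu\mid N}(X^{(N)};q,t)}{(t^N;q,t)_\nu}
\end{equation*}
for every $\nu \in \Y(K)$. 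By Theorem \ref{thm6.A} together with the interlacement structure, each measure $\wt\La^N_K(X^{(N)},\ccdot)$ is supported in the compact set $\{Y \in \wt\Om_K : Y \subset [X]\}$. As $N \to \infty$, the right-hand side converges to $P_\nu(X;q,t)$, since $(t^N;q,t)_\nu \to 1$ and $P_{\nu\mid N}$ stabilizes to the Macdonald symmetric function $P_\nu$ along convergent sequences in $\wt\Om$. Compactness yields weak subsequential limits; by Lemma \ref{lemma5.uni} the limit is the unique probability measure $M^{(X)}_K$ satisfying \eqref{eq8.B}, which upgrades the convergence to the whole sequence. Coherence $M^{(X)}_K \wt\La^K_{K-1} = M^{(X)}_{K-1}$ follows by applying Theorem \ref{thm6.B} to \eqref{eq8.B} and invoking Lemma \ref{lemma5.uni} once more. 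Distinctness of the family $\{M^{(X)}\}$ is immediate from \eqref{eq8.B}, since $\Sym$ contains the power sums, which separate configurations in $\wt\Om$.

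Next I would prove that every extreme coherent system equals some $M^{(X)}$. Via Proposition \ref{prop7.A}, extreme coherent systems correspond to extreme probability Gibbs measures $\mathscr M$ on the path space $\wt\Pi$, and Corollary \ref{cor7.C} reduces the problem to analyzing such $\mathscr M$ supported on a single stratum. On a finite stratum $\Pi_k$, the path coordinates $X(N) \in \Om_k$ ($N \ge k$) evolve according to the $\pprec$-kernel; extremality of $\mathscr M$ (via triviality of the tail sigma-algebra of paths) forces the tail equivalence class to be almost surely determined, and a direct analysis identifies each such class with a unique $X \in \Om_k$, yielding $\mathscr M = \mathscr M^{(X)}$. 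On the infinite stratum $\Pi$, the identity of Theorem \ref{thm6.B} expresses $P_\nu(X(N);q,t)/(t^N;q,t)_\nu$ as a backward martingale in $N$ with respect to the conditional distributions of the Gibbs measure; by the martingale convergence theorem and extremality of $\mathscr M$, each such normalized evaluation converges $\mathscr M$-almost surely to a deterministic constant as $N \to \infty$, and since the $P_\nu$ separate points of $\wt\Om$ one recovers an $\mathscr M$-almost sure deterministic limiting configuration $X \in \Om_\infty$; comparison with \eqref{eq8.B} then identifies $\mathscr M = \mathscr M^{(X)}$. Extremality of each individual $M^{(X)}$ follows from the preceding analysis by standard Choquet-simplex reasoning, combined with the distinctness of the family.

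For the Borel structure, the assignment $X \mapsto M^{(X)}$ is continuous from $\wt\Om$ into $\varprojlim \P(\wt\Om_K)$, since $X \mapsto P_\nu(X;q,t)$ is continuous on $\wt\Om$ for each $\nu$ and Lemma \ref{lemma5.uni} transfers pointwise convergence in \eqref{eq8.B} to weak convergence of $M^{(X)}_K$ on the compact set $\{Y \subset [X]\}$. Being a continuous bijection between standard Borel spaces, it is a Borel isomorphism. The main obstacle will be the argument on the infinite stratum: upgrading the almost-sure convergence of the individual martingales $P_\nu(X(N);q,t)/(t^N;q,t)_\nu$ to almost-sure convergence of the entire configuration $X(N)$ in the topology of $\wt\Om$, and matching the resulting deterministic limit with the boundary parametrization, will require careful interplay between the martingale limits, the compactness of supports provided by Theorem \ref{thm6.A}, and the uniqueness furnished by Lemma \ref{lemma5.uni}.
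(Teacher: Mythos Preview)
Your construction of $M^{(X)}$ (the first paragraph) matches the paper's Step~1 closely.  The divergence is in how you handle completeness---showing every extreme coherent system is some $M^{(X)}$---and there is a genuine gap.

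The paper does \emph{not} use the stratification of $\wt\Pi$ here (that machinery is reserved for Theorem~\ref{thm8.B}).  Instead, Step~2 of the paper's proof invokes a general approximation theorem \cite[Theorem~6.1]{OO-IMRN}: for any extreme coherent system $\{M_K\}$ there is a sequence $X(N)\in\wt\Om_N$ with $\wt\La^N_K(X(N),\ccdot)\to M_K$.  The crucial point is then Proposition~\ref{prop8.A}, a concrete tightness estimate showing that the configurations $X(N)$ are uniformly bounded in some $[-a,a]$; only after this does compactness of $\{X\in\wt\Om: X\subset[-a,a]\}$ yield a subsequential limit $X\in\wt\Om$ with $M_K=M^{(X)}_K$.

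Your backward-martingale argument on the stratum $\Pi$ misses exactly this step.  Granting that $P_{\nu\mid N}(X(N);q,t)/(t^N;q,t)_\nu$ converges almost surely to a deterministic constant $c_\nu$ for each $\nu$, you still need to know that the family $(c_\nu)_{\nu\in\Y}$ is realized as $(P_\nu(X;q,t))_\nu$ for some $X\in\wt\Om$.  Separation of points by the $P_\nu$ gives injectivity of $X\mapsto(P_\nu(X))_\nu$, not surjectivity onto the set of possible limit tuples.  Concretely, nothing in your argument prevents $X(N)$ from having a point escaping to $\pm\infty$ along the path, in which case already $p_1(X(N))$ diverges and no limiting configuration in $\wt\Om$ exists.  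Ruling this out is precisely the content of Proposition~\ref{prop8.A} (via the explicit lower bound of Lemma~\ref{lemma8.A}), which you never invoke.  On the finite strata $\Pi_k$ the $\pprec$-relation does force uniform bounds, so the issue is localized to $\Pi$, but that is the main case.
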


In the particular case $t=q$ this result was proved in \cite[Theorem 6.2]{Ols-2016}, and the same argument works in the general case. So we only sketch the proof and refer to \cite{Ols-2016} for more details.

\begin{proof}[Sketch of proof]

\emph{Step} 1. For $N>K$ we set 
$$
\wt\La^N_K:=\wt\La^N_{N-1}\wt\La^{N-1}_{N-2}\dots\wt\La^{K+1}_K;
$$
this is a stochastic matrix of format $\wt\Om_N\times\wt\Om_K$.  Below we use the following direct generalization of \eqref{eq6.A}: if $\nu\in\Y(K)$, then
\begin{equation}\label{eq8.D}
\sum_{Y\in\wt\Om_K}\wt{\La}^N_K(X,Y)\frac{P_{\nu\mid K}(Y;q,t)}{(t^K;q,t)_\nu}=\frac{P_{\nu\mid N}(X;q,t)}{(t^N;q,t)_\nu},\qquad \forall X\in\wt\Om_N.
\end{equation}
Note that $\wt\La^N_K(X,Y)$ vanishes unless $Y\subset[X]$. 

Let $X\in\wt\Om$ be arbitrary. Take a sequence $\{X(N)\in\wt\Om_N: N=1,2,\dots\}$ such that $X(N)\subset[X]$ and $X(N)\to X$ (such a sequence always exists). We claim that for any fixed $K=1,2,\dots$ there exists a weak limit 
\begin{equation}\label{eq8.A}
M^{(X)}_K:=\lim_{N\to\infty}\wt\La^N_K(X(N),\ccdot)\in\P(\wt\Om_K).
\end{equation}

Indeed, substitute $X=X(N)$ into \eqref{eq8.D} and rewrite the resulting equality in the form
\begin{equation}\label{eq8.D1}
\left\langle \wt{\La}^N_K(X(N),\ccdot), \; \frac{P_{\nu\mid K}(\ccdot;q,t)}{(t^K;q,t)_\nu}\right\rangle=\frac{P_{\nu\mid N}(X(N);q,t)}{(t^N;q,t)_\nu},
\end{equation}
where the angular brackets denote the canonical pairing between measures and functions.

Fix $\nu$ and let $N\to\infty$. Then $(t^N;q,t)_\nu\to1$ and the right-hand side of \eqref{eq8.D1} tends to $P_\nu(X;q,t)$. Thus, the left-hand side also has a limit for each $\nu$. Since the measures $\wt{\La}^N_K(X(N),\ccdot)$ are concentrated on a compact set, they have a weak limit, as it is seen from the argument of Lemma \ref{lemma5.uni}. 

By the very construction, the limit measure, denoted by  $M^{(X)}_K$, is concentrated on the compact set $\{Y\in\wt\Om_K: Y\subset [X]\}$ and is uniquely determined by the relations \eqref{eq8.B}. In particular, it does not depend on the choice of the approximation $X(N)\to X$. Furthermore, the sequence $\{M^{(X)}_K\}$ is a coherent system, and different configurations $X\in\wt\Om$ lead to different coherent systems. All these claims are proved exactly as in \cite{Ols-2016}. 

(Note that a phrase in \cite{Ols-2016} has to be corrected: there, in the proof of Theorem 6.2,  the beginning of step 1, it is written that any sequence $\{X(N)\}$ converging to $X$ is `regular', meaning that  the measures $\wt\La^N_K(X(N),\ccdot)$ converge in a stronger sense, which is not true in general. However, we do not need this; for our purpose it suffices that these measures converge weakly.) 

\emph{Step} 2.  Let $\{M_K:K=1,2,\dots\}$ be an extreme coherent system. By a general theorem (see \cite[Theorem 6.1]{OO-IMRN}), there exists a sequence $\{X(N)\in\wt\Om_N\}$ such that, as $N$ goes to infinity, $\wt\La^N_K(X(N),Y)\to M_K(Y)$ for every $K$ and every $Y\in\wt\Om_K$. A fortiori, for every $K$, the measures $\wt\La^N_K(X(N), \ccdot)$ converge to $M_K$ weakly. In particular, this holds for $K=1$ which in turn implies that the measures $\wt\La^N_1(X(N), \ccdot)$ form a tight family of probability measures on $\wt\Om_1$. Now we apply Proposition \ref{prop8.A} (see below); it tells us that there exists a positive number $a$ such that $X(N)\subset [-a,a]$ for each $N$. Because the subset $\{X\in\wt\Om: X\subset [-a,a]\}\subset\wt\Om$ is compact, the sequence $\{X(N)\}$ has a limit point in $\wt\Om$. Therefore, one may choose a subsequence of indices $N$ such that, along this subsequence, $X(N)$ converges to some element $X\in\wt\Om$. Applying the result of step 1 we see that $M_K=M^{(X)}_K$ for every $K$. We conclude that the extreme coherent systems are contained among the systems of the form $\{M^{(X)}_K\}$.  

\emph{Step} 3. Here we prove the converse claim: any coherent system of the form $\{M^{(X)}_K\}$ is extreme. The argument is the same as in \cite{Ols-2016}, with Schur symmetric functions being replaced by  Macdonald symmetric functions. 

\emph{Step} 4. Here we apply a general fact about Borel maps to prove the final claim of the theorem. This claim is also necessary to justify an argument in step 3. 
\end{proof}

\subsection{A condition of tightness}\label{sect8.2}
Our task here is to prove the following proposition, which was used in the argument above, on step 2.

\begin{proposition}\label{prop8.A}
Let\/ $\{X(N)\in\wt\Om_N: N=1,2,\dots\}$ be a sequence of configurations such that the corresponding sequence $\{\wt\La^N_1(X(N),\ccdot)\}$ of probability measures on\/ $\wt\Om_1$ is tight. Then there exists $a>0$ such that $X(N)\subset[-a,a]$ for all $N$. 
\end{proposition}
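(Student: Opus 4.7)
The plan is to turn the tightness hypothesis into a second-moment estimate via the coherency relation \eqref{eq8.D}, and then propagate it back to a uniform bound on the configurations $X(N)$. The key observation is that the second moment $\int_{\wt\Om_1}p_2(Y)\,\wt\La^N_1(X,dY)$, where $p_2(Y):=\sum_{y\in Y}y^2$ (so $p_2(\{y\})=y^2$ and $p_2(\varnothing)=0$), is bounded below by a positive constant times $\max_{x\in X}x^2$ uniformly in $N\ge 2$, whereas on the support of the measure one has the deterministic upper bound $p_2(Y)\le\max_{x\in X}x^2$. A Chebyshev-type comparison then forces $\max_{x\in X(N)}|x|$ to be uniformly bounded.

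For the lower bound, specialize \eqref{eq8.D} to $K=1$ and $\nu=(2)$. Using $P_{(2)\mid 1}(y)=y^2$, $(t;q,t)_{(2)}=(1-t)(1-tq)$, and $(t^N;q,t)_{(2)}=(1-t^N)(1-t^Nq)\le 1$, one obtains
\[
\int_{\wt\Om_1}\!p_2(Y)\,\wt\La^N_1(X,dY)\;\ge\;(1-t)(1-tq)\,P_{(2)\mid N}(X;q,t).
\]
A direct expansion of the one-row Macdonald polynomial yields
\[
P_{(2)\mid N}(X;q,t)=\frac{(1+q)(1-t)}{2(1-tq)}\,p_1(X)^2+\frac{(1-q)(1+t)}{2(1-tq)}\,p_2(X),
\]
in which both coefficients are strictly positive. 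Dropping the nonnegative $p_1^2$ term and setting $M:=\max_{x\in X}|x|$, one concludes
\[
\int_{\wt\Om_1}\!p_2(Y)\,\wt\La^N_1(X,dY)\;\ge\;C_0\,p_2(X)\;\ge\;C_0\,M^2,\qquad C_0:=\tfrac12(1-q)(1-t^2)>0,
\]
with $C_0$ depending only on $(q,t)$. For the complementary upper bound, iterating Theorem \ref{thm6.A} together with the interlacement relations of Lemma \ref{lemma5.interlace} and Definition \ref{def6.B} shows that $\wt\La^N_1(X,\ccdot)$ is supported on $\{Y\in\wt\Om_1:Y\subset[X]\}$, so that $p_2(Y)\le M^2$ almost surely.

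Applying both bounds to $X=X(N)$ and writing $M_N:=\max_{x\in X(N)}|x|$, a split of the integral over $\{Y\subset[-a,a]\}$ and its complement gives
\[
C_0\,M_N^2\;\le\;a^2+M_N^2\,\wt\La^N_1\bigl(X(N),\;\{Y:Y\not\subset[-a,a]\}\bigr).
\]
By the tightness hypothesis, one may choose $a>0$ such that the last mass is less than $C_0/2$ for every $N$; the inequality then forces $M_N^2\le 2a^2/C_0$, uniformly in $N$, which yields $X(N)\subset[-a',a']$ with $a':=a\sqrt{2/C_0}$. The only nontrivial step is the positivity of the $p_2$-coefficient in $P_{(2)\mid N}$; the remaining ingredients are direct consequences of results already established.
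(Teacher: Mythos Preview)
Your proof is correct and takes a genuinely different route from the paper's. The paper establishes an auxiliary lemma (Lemma \ref{lemma8.A}) giving a uniform lower bound $\wt\La^N_1(X,x_0)\ge c>0$ on the mass of the single atom located at the point $x_0\in X$ of maximal absolute value; this is proved via the Cauchy identity for Macdonald functions and a residue computation in the generating series $\sum_n Q_{(n)\mid1}(z^{-1})\,y^n$. The proposition then follows by contradiction: an unbounded sequence $X(N)$ would carry an atom of mass $\ge c$ off to infinity, contradicting tightness.

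Your argument replaces this structural atom bound by an analytic second-moment estimate, using only the explicit power-sum expansion of $P_{(2)}$ and the support constraint $Y\subset[X]$. This is shorter and more elementary for the purpose at hand, and avoids the generating-function machinery. What the paper's approach buys in exchange is a sharper piece of information --- an explicit lower bound on a specific atom of $\wt\La^N_1(X,\ccdot)$ --- though that extra information is not used elsewhere in the paper. Both arguments rely on the same two ingredients you identified: the coherency relation \eqref{eq8.D} with $K=1$, and the fact that $\wt\La^N_1(X,\ccdot)$ is supported on $\{Y:Y\subset[X]\}$.
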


First we state a lemma. 

\begin{lemma}\label{lemma8.A}
Let $X$ be a nonempty configuration from $\wt\Om_N$, where $N\ge2$, and let $x_0$ denote the point of $X$ with maximal absolute value, so that $x_0$ is either the leftmost or the rightmost point {\rm(}in the case these endpoints of $X$ have the same absolute value we take as $x_0$ any of them{\rm)}. 

The number $\wt\La^N_1(X,x_0)$ is bounded from below by a universal positive constant{\rm:}
\begin{equation}\label{eq8.B1}
\wt\La^N_1(X,x_0)\ge c:=\frac{\prod\limits_{m=1}^\infty(t^m;q)_\infty}{(-1;q)_\infty\prod\limits_{m=1}^\infty(-t^m;q)_\infty}>0.
\end{equation}
\end{lemma}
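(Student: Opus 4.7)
The plan is to combine the coherency relation of Theorem~\ref{thm6.B} with the classical generating function for one-row Macdonald polynomials to derive an exact formula for $\wt\La^N_1(X,x_0)$, and then bound it from below by $c$. First, by the natural $\zeta_+\leftrightarrow -\zeta_-$ symmetry I reduce to the case $x_0>0$ (so $x_0$ is the rightmost point of $X$). Write $\mu:=\wt\La^N_1(X,\ccdot)$. Iterating \eqref{eq6.A} from level $N$ down to level $1$ with $\nu=(n)$ gives the moment identity
\[
\sum_{y\in\wt\Om_1}\mu(y)\,y^n \;=\; \frac{(q;q)_n}{(t^N;q)_n}\,Q_{(n)\mid N}(X;q,t),\qquad n\ge 0,
\]
where $Q_{(n)}=\tfrac{(q;q)_n}{(t;q)_n}P_{(n)}$ is the usual renormalization.

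Next, I would invoke the classical identity (Macdonald \cite[ch.~VI]{M})
\[
F(z) \;:=\; \sum_{n\ge0}Q_{(n)\mid N}(X;q,t)\,z^n \;=\; \prod_{x\in X}\frac{(txz;q)_\infty}{(xz;q)_\infty},
\]
whose singularities on the circle $|z|=1/x_0$ are the simple pole at $z=1/x_0$, and, if $-x_0\in X$, also at $z=-1/x_0$ (all other poles lie strictly further from the origin). A standard residue computation gives
\[
Q_{(n)\mid N}(X;q,t) \;\sim\; -x_0^{n+1}\,\mathrm{Res}_{1}F \;+\; (-1)^{n}\,x_0^{n+1}\,\mathrm{Res}_{-1}F,\qquad n\to\infty,
\]
where $\mathrm{Res}_{1}F = -\frac{(t;q)_\infty}{x_0(q;q)_\infty}\prod_{x\in X\setminus\{x_0\}}\frac{(tx/x_0;q)_\infty}{(x/x_0;q)_\infty}$ and $\mathrm{Res}_{-1}F$ is the analogous expression obtained by the substitution $x\mapsto -x/x_0$. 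Dividing the moment identity by $x_0^n$ and taking $n\to\infty$ along \emph{even} and \emph{odd} integers separately — justified by dominated convergence since $|y/x_0|\le 1$ on $\supp\mu\subseteq[X]$ — gives two relations
\[
\mu(x_0)\pm\mu(-x_0) \;=\; \frac{(q;q)_\infty}{(t^N;q)_\infty}\cdot x_0\cdot\bigl(-\mathrm{Res}_1F \pm \mathrm{Res}_{-1}F\bigr).
\]
Averaging eliminates $\mathrm{Res}_{-1}F$ and yields the key identity
\[
\mu(x_0) \;=\; \frac{(t;q)_\infty}{(t^N;q)_\infty}\prod_{x\in X\setminus\{x_0\}}\frac{(tx/x_0;q)_\infty}{(x/x_0;q)_\infty},
\]
valid \emph{both} in the generic and in the tied ($-x_0\in X$) cases.

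To finish I would bound the right-hand side below by $c$. Every factor with $x>0$ in $X\setminus\{x_0\}$ exceeds $1$ and can be dropped. For the negative points, Lemma~\ref{lemma5.Omega} forces $|x_{k+j}|/x_0 \le t^{l-j}$, where $l\le N-1$ is the number of negative points of $X$; and since $u\mapsto(-tu;q)_\infty/(-u;q)_\infty$ is monotone decreasing on $(0,1)$, the product is minimized in the extremal configuration $|x_{k+j}|/x_0 = t^{l-j}$, giving by telescoping
\[
\prod_{x\in X\setminus\{x_0\}}\frac{(tx/x_0;q)_\infty}{(x/x_0;q)_\infty} \;\ge\; \prod_{j=1}^{l}\frac{(-t^{l-j+1};q)_\infty}{(-t^{l-j};q)_\infty} \;=\; \frac{(-t^l;q)_\infty}{(-1;q)_\infty}.
\]
Combined with the identity $(t;q)_\infty/(t^N;q)_\infty=(t;q)_{N-1}$, the desired inequality $\mu(x_0)\ge c$ reduces, after cancelling common factors, to
\[
(-t^l;q)_\infty\prod_{m\ge1}(-t^m;q)_\infty \;\ge\; (t^N;q)_\infty\prod_{m\ge2}(t^m;q)_\infty,
\]
which holds factor-by-factor since every $(-t^m;q)_\infty>1>(t^m;q)_\infty$.

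The main obstacle is the tied case $-x_0\in X$: the even-$n$ limit alone gives only the sum $\mu(x_0)+\mu(-x_0)$. This is exactly why one must perform the two-pole residue analysis above and combine the even and odd asymptotics; averaging kills $\mathrm{Res}_{-1}F$ and produces $\mu(x_0)$ itself, so a single clean formula covers generic and tied configurations uniformly.
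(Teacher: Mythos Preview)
Your argument is correct and lands on the same exact formula as the paper,
\[
\wt\La^N_1(X,x_0)=\frac{(t;q)_\infty}{(t^N;q)_\infty}\prod_{x\in X\setminus\{x_0\}}\frac{(tx/x_0;q)_\infty}{(x/x_0;q)_\infty},
\]
and the subsequent bounding is essentially the same (your telescoping argument for the negative points is in fact slightly sharper than the paper's estimate). The difference lies in how the identity is extracted. The paper sums the moment relations against $z^{-n}$ and uses the $q$-binomial theorem to obtain a closed generating-function identity
\[
\sum_{y}\wt\La^N_1(X,y)\,\frac{(yz^{-1}t^N;q)_\infty}{(yz^{-1};q)_\infty}=\prod_{x\in X}\frac{(xz^{-1}t;q)_\infty}{(xz^{-1};q)_\infty},
\]
and then takes the residue at $z=x_0$: since $\supp\mu\subset[X]$, only the summand $y=x_0$ contributes a pole there, so the tied case $-x_0\in X$ is handled automatically with no case distinction. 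Your route --- large-$n$ asymptotics of the moments via the dominant singularities of $F(z)$ --- is the Tauberian dual of this, and requires the even/odd split to disentangle $\mu(x_0)$ from $\mu(-x_0)$. Both are valid; the residue extraction is a little cleaner. One small slip: the normalization is $Q_{(n)}=\tfrac{(t;q)_n}{(q;q)_n}P_{(n)}$, not its reciprocal, though the moment identity you actually use is correct.
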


Observe that the proposition immediately follows from the lemma. Indeed, if the configurations $X(N)$ are not uniformly bounded, then one can choose a subsequence of numbers $N_1<N_2<\dots$ such that for the corresponding configurations $X(N_i)$, at least one endpoint goes to infinity. By virtue of the lemma, this means that the measure $\wt\La^N_1(X_i,\ccdot)$ has an atom of size $\ge c>0$ that escapes to infinity as $i\to\infty$. But this contradicts the tightness assumption. Thus, it remains to prove the lemma. 

\begin{proof}[Proof of the lemma]
First of all note that the two infinite products on the right-hand side of \eqref{eq8.B1} converge. Indeed, to see this, write each of them as a double product
$$
\prod_{m=1}^\infty (\pm t^m;q)_\infty=\prod_{m=1}^\infty\prod_{n=0}^\infty(1\mp t^mq^n)
$$
and observe that 
$$
\sum_{m=1}^\infty\sum_{n=0}^\infty t^mq^n<\infty.
$$

We follow the proof of Lemma 4.3 in \cite{Ols-2016} which in turn relies on computations in \S3 of that paper. 

\emph{Step} 1. Recall that $\wt\Om_1=\Om_0\cup\Om_1$, where $\Om_0$ consists of the empty configuration and $\Om_1=\L$. Since $X$ is assumed to be nonempty,  the measure $\wt\La^N_1(X,\ccdot)$ is concentrated on $\Om_1$: here we use Theorem \ref{thm6.A}. Thus, we may regard $\wt\La^N_1(X,\ccdot)$ as a measure on $\L$.

\emph{Step} 2. Let us show that
\begin{equation}\label{eq8.C}
\sum_{y\in\L}\wt\La^N_1(X,y)\frac{(yz^{-1}t^N;q)_\infty}{(yz^{-1};q)_\infty}=\prod_{x\in X}\frac{(xz^{-1}t;q)_\infty}{(xz^{-1};q)_\infty}, \quad z\in\C\setminus\R,
\end{equation}
cf. \cite[Proposition 3.1]{Ols-2016}. 

Indeed, setting $K=1$ and $\nu=(n)$ in \eqref{eq8.D} we get
\begin{equation}\label{eq8.E}
\sum_{y\in\L}\wt{\La}^N_1(X,y)\frac{(t^N;q)_n}{(t;q)_n}y^n=P_{(n)\mid N}(X;q,t),\qquad \forall X\in\Om_N, \quad n\in\Z_{\ge0}.
\end{equation}
Assume first that $|z|$ is large, multiply the both sides of \eqref{eq8.E} by $Q_{(n)\mid1}(z^{-1};q,t)$ (the univariate Macdonald $Q$-polynomial) and sum over all $n\in\Z_{\ge0}$. On the right-hand side we obtain the right-hand side of \eqref{eq8.C}, by virtue of the fundamental Cauchy identity for the Macdonald symmetric functions \cite[ch. VI, (4.13)]{M}. 

Let us turn to the left-hand side. Here we may interchange the order of summation, which gives us 
$$
\sum_{y\in\L}\wt\La^N_1(X,y)\left\{\sum_{n=0}^\infty\frac{(t^N;q)_n}{(t;q)_n}Q_{(n)\mid1}(z^{-1};q,t)y^n\right\}.
$$
We can compute the interior sum. From the definition of the Macdonald $Q$-functions (see \cite[ch. VI, (4,12), (4.11), and 6.19)]{M}) it follows that 
$$
Q_{(n)\mid1}(z^{-1};q,t)=\frac{(t;q)_n}{(q,q)_n}z^{-n},
$$
Therefore, the interior sum is
$$
\sum_{n=0}^\infty\frac{(t^N;q)_n}{(q;q)_n}(yz^{-1})^n=\frac{(yz^{-1}t^N;q)_\infty}{(yz^{-1};q)_\infty},
$$
where the last equality follows from the $q$-binomial formula \cite{GR}. This gives the desired equality \eqref{eq8.C}. Finally, we get rid of the assumption that $|z|$ is large by using analytic continuation. 

\emph{Step} 3. Let us derive from \eqref{eq8.C} the equality
\begin{equation}\label{eq8.F}
\wt\La^N_1(X,x_0)=\frac{(t;q)_\infty}{(t^N;q)_\infty}\,\frac{\prod\limits_{x\in X\setminus\{x_0\}}(xx_0^{-1}t;q)_\infty}{\prod\limits_{x\in X\setminus\{x_0\}}(xx_0^{-1};q)_\infty}.
\end{equation}

Indeed, the right-hand side of \eqref{eq8.C} is a meromorphic function in $z\in\C\setminus\{0\}$. It has a pole at $z=x_0$ with the residue 
\begin{equation}\label{eq8.C1}
\Res_{z=x_0}\left\{\prod_{x\in X}\frac{(xz^{-1}t;q)_\infty}{(xz^{-1};q)_\infty}\right\}=x_0\frac{(t;q)_\infty}{(q;q)_\infty}\,\frac{\prod\limits_{x\in X\setminus\{x_0\}}(xx_0^{-1}t;q)_\infty}{\prod\limits_{x\in X\setminus\{x_0\}}(xx_0^{-1};q)_\infty}.
\end{equation}

On the hand, let us compute the same residue by looking at the left-hand side of \eqref{eq8.C}. Recall that the support of the measure $\wt\La^N_1(X,\ccdot)$ is contained in $[X]$, the smallest closed interval containing $X$. From this and by the very definition of $x_0$ we conclude that only the summand with $y=x_0$ contributes. Therefore, the residue in question is equal to 
\begin{equation}\label{eq8.C2}
\wt\La^N_1(X,x_0)\Res_{z=x_0}\left\{\frac{(x_0z^{-1}t^N;q)_\infty}{(x_0z^{-1};q)_\infty}\right\} =\wt\La^N_1(X,x_0) x_0\frac{(t^N;q)_\infty}{(q;q)_\infty}.
\end{equation} 

Equating \eqref{eq8.C1} to \eqref{eq8.C2} we obtain \eqref{eq8.F}. 

\emph{Step} 4. It remains to find a lower bound for the right-hand side of \eqref{eq8.F}. By the definition of $x_0$, the numerator can be estimated as follows 
$$
(t;q)_\infty \prod\limits_{x\in X\setminus\{x_0\}}(xx_0^{-1}t;q)_\infty\ge (t;q)_\infty (t^2;q)_\infty\dots (t^N;q)_\infty\ge\prod_{m=1}^\infty(t^m;q)_\infty.
$$
Likewise, the denominator can be estimated as follows
$$
(t^N;q)_\infty \prod\limits_{x\in X\setminus\{x_0\}}(xx_0^{-1};q)_\infty\le (-1;q)_\infty(-t;q)_\infty \dots (-t^{N-2};q)_\infty\le(-1;q)_\infty\prod_{m=1}^\infty(-t^m;q)_\infty.
$$
This completes the proof of \eqref{eq8.B}.
\end{proof}

\subsection{Proof  of Theorem B}

The next theorem is similar to that of Theorem \ref{thm8.A}. Recall that the space $\Om_\infty$ was introduced in Definition \ref{def6.A}.

\begin{theorem}\label{thm8.B}
The elements of the boundary of the chain\/ $\{\Om_N,\La^N_{N-1}\}$ can be parametrized by the configurations $X\in\Om_\infty$. 

More precisely, to every $X\in\Om_\infty$ there corresponds a coherent system $M^{(X)}=\{M^{(X)}_K: K=1,2,\dots\}$; here  the $K$th measure $M^{(X)}_K\in\P(\Om_K)$ is concentrated on the compact set $\{Y\in\Om_K: Y\subset [X]\}$ and is uniquely determined by the relations
\begin{equation}\label{eq8.B3}
\sum_{Y\in\Om_K}M^{(X)}_K(Y)\frac{P_{\nu\mid K}(Y;q,t)}{(t^K; q)_\nu}=P_\nu(X;q,t),
\end{equation}
where $\nu$ is an arbitrary partition with $\ell(\nu)\le K$. The coherent families $M^{(X)}$ are pairwise distinct and are precisely the extreme ones. 

Furthermore, the Borel structure on the boundary coincides with the Borel structure of the ambient space $\wt\Om$ determined by its topology. 
\end{theorem}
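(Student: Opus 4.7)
The plan is to reduce Theorem~\ref{thm8.B} to Theorem~\ref{thm8.A} by embedding the chain $\{\Om_N,\La^N_{N-1}\}$ as a subchain of $\{\wt\Om_N,\wt\La^N_{N-1}\}$ and then using the stratification of the path space from Section~\ref{sect7.3} to isolate the relevant part of its boundary. First, I would observe that on $\Om_N\subset\wt\Om_N$ the extended kernel $\wt\La^N_{N-1}$ restricts to $\La^N_{N-1}$ and assigns mass only to configurations in $\Om_{N-1}\subset\wt\Om_{N-1}$. Thus every coherent system of $\{\Om_N,\La^N_{N-1}\}$ lifts tautologically to a coherent system of the extended chain supported on the $\Om_N$'s; this embedding is an affine isomorphism onto a face of the simplex of all coherent systems of $\{\wt\Om_N,\wt\La^N_{N-1}\}$, hence carries extreme points to extreme points.

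By Theorem~\ref{thm8.A}, the extreme systems of the extended chain are $\{M^{(X)}_K\}$ for $X\in\wt\Om$, and by Corollary~\ref{cor7.C} each of them corresponds to a Gibbs measure concentrated on exactly one of the strata $\Pi,\Pi_0,\Pi_1,\dots$ of $\wt\Pi$. The boundary of $\{\Om_N,\La^N_{N-1}\}$ is therefore the subset of those extreme systems whose Gibbs measure sits on the nonsingular stratum $\Pi$, and the remaining task is to match configurations $X\in\wt\Om$ with the strata they produce. For $X\in\Om_k$ with $k<\infty$, taking $X(N):=X$ (embedded in $\wt\Om_N$) as the approximating sequence and iterating Theorem~\ref{thm6.A} shows that $M^{(X)}_K$ is supported on $\Om_k\subset\wt\Om_K$ for every $K\ge k$; hence the associated Gibbs measure lies on $\Pi_k$ and does not contribute to the boundary of the smaller chain. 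For $X\in\Om_\infty$ I would argue by elimination that the Gibbs measure must sit on $\Pi$: if instead it sat on some $\Pi_k$ with $k$ finite, then for each $K>k$ the measure $M^{(X)}_K$ would be supported on configurations with only $k$ nonzero components, making the left-hand side of \eqref{eq8.B} vanish for every $\nu$ with $\ell(\nu)>k$ (since $P_{\nu\mid K}$ vanishes whenever more than $K-\ell(\nu)$ of its arguments equal zero). The right-hand side equals $P_\nu(X;q,t)$, and using the identity $P_{(1^m)}=e_m$ together with the generating-function identity $\prod_{x\in X}(1+zx)=\sum_{m\ge 0}e_m(X)z^m$---which converges absolutely on $\Om_\infty$ because the coordinates decay geometrically like $t^i$ and hence $\sum_{x\in X}|x|<\infty$---one sees that $e_m(X)\ne 0$ for infinitely many $m$ (otherwise the product would be a polynomial of bounded degree, forcing $X$ to be finite). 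Choosing such an $m>k$ and $\nu=(1^m)$ yields the required contradiction.

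Consequently $X\mapsto\{M^{(X)}_K\}$ restricts to a bijection between $\Om_\infty$ and the boundary of $\{\Om_N,\La^N_{N-1}\}$; the characterizing relation \eqref{eq8.B3} is then obtained from \eqref{eq8.B} by collapsing the summation to $Y\in\Om_K$, and the Borel-structure claim follows from Theorem~\ref{thm8.A} by restriction to the Borel subset $\Om_\infty\subset\wt\Om$. I expect the most delicate point to be the nonvanishing step in the elimination argument above: it amounts to showing that the algebra homomorphism $\Sym\to\R$ given by evaluation at $X\in\Om_\infty$ does not factor through any $k$-variable quotient of $\Sym$ with $k$ finite, and the generating-function identity handles this cleanly once absolute convergence of $\prod_{x\in X}(1+zx)$ has been established using the explicit form of the coordinates of points in $\Om_\infty$.
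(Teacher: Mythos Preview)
Your proposal is correct and follows essentially the same route as the paper's proof: embed the chain $\{\Om_N,\La^N_{N-1}\}$ into $\{\wt\Om_N,\wt\La^N_{N-1}\}$, invoke Theorem~\ref{thm8.A} and Corollary~\ref{cor7.C}, and then separate $X\in\Om_\infty$ from $X\in\Om_k$ using the vanishing of $P_{\nu\mid K}$ on low-dimensional strata together with the generating function $\prod_{x\in X}(1+zx)=\sum e_m(X)z^m$. One minor remark: for $X\in\Om_k$ you assert that the Gibbs measure lies on $\Pi_k$ specifically, but the weak limit defining $M^{(X)}_K$ could in principle leak onto $\wt\Om_k\setminus\Om_k$; however only the weaker conclusion ``not on $\Pi$'' is needed, and that follows immediately from your argument (this is exactly how the paper phrases it).
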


This result is a reformulation (with a slight refinement) of Theorem B (see section \ref{results2}).

\begin{proof}
The results of Section \ref{sect7} show that the boundary of the chain $\{\Om_N,\La^N_{N-1}\}$ is contained in the boundary of the chain $\{\wt\Om_N,\wt\La^N_{N-1}\}$. We know (Theorem \ref{thm8.A}) that the latter boundary is the space $\wt\Om$, and we are going to prove that the former boundary is its subset $\Om_\infty$. After that the remaining claims will follow from the corresponding claims of Theorem \ref{thm8.A}.

Proposition \ref{prop7.A} allows us to switch to the language of Gibbs measures. Let $X\in\wt\Om$, $\{M^{(X)}_K:K=1,2,\dots\}$ be the corresponding coherent system, and $\mathscr M^{(X)}$ denote the corresponding Gibbs measure on the path space $\wt\Pi$. We know that $\mathscr M^{(X)}$ is extreme. Therefore, by virtue of Corollary \ref{cor7.C},  $\mathscr M^{(X)}$ is concentrated on one of the strata of $\wt\Pi$, and  the boundary under question is the set of those configurations $X\in\wt\Om$ for which the corresponding stratum  is $\Pi$, not $\Pi_k$. 

Thus, it suffices to show that $\mathscr M^{(X)}$ is concentrated on some $\Pi_k$ if and only if  $X\in\wt\Om\setminus\Om_\infty$. We proceed to the proof of this claim.  

Suppose that there exists $k$ such that $\mathscr M^{(X)}$ is concentrated on the stratum $\Pi_k$. This implies that for any $K>k$, the measure $M^{(X)}_K$ is concentrated on $\Om_k\subset \wt\Om_K$.  Now let $\nu$ be an arbitrary partition with $\ell(\nu)>k$. Take an arbitrary $K\ge\ell(\nu)$ and observe that the polynomial $P_{\nu\mid K}(\ccdot;q,t)$ vanishes on the subset $\Om_k\subset\wt\Om_K$. Then \eqref{eq8.B} shows that $P_\nu(X;q,t)=0$. In particular, all elementary symmetric functions $e_n$ with $n>k$ vanish at $X$ (here we use the fact that $e_n$ coincides with $P_{(1^n)}(\ccdot;q,t)$). Therefore, the generating function
$$
1+\sum_{n=1}^\infty e_n(X)z^n=\prod_{x\in X}(1+x z)
$$
is a polynomial in $z$ of degree at most $k$. 

On the other hand, this function vanishes at each point of the form $z=-x^{-1}$ with $x\in X$. This implies that $X$ has at most $k$ points, so that $X\in\wt\Om\setminus\Om_\infty$.

Conversely, suppose that $X\in\wt\Om\setminus\Om_\infty$, so that $X\in\Om_k\subset\wt\Om$ for some $k$. For each $N>k$ let $X(N)$ denote the same configuration $X$ regarded as an element of $\Om_k\subset \wt\Om_N$. Then for each $K$ the limit relation \eqref{eq8.A} holds. It shows that the measure $M^{(X)}_K$ is concentrated on the subset of configurations with at most $k$ points. This in turn implies that $\mathscr M^{(X)}$ cannot be concentrated on $\Pi$. .
\end{proof}

\subsection{Proof of Theorem C}
The next theorem contains Theorem C (section \ref{results3}). 

\begin{theorem}\label{thm8.C}
Let $\{M_N\}$ be a coherent system of probability distributions for the chain $\{\Om_N, \LaN\}$ or $\{\wt\Om_N, \wt\La^N_{N-1}\}$ and let $M_\infty$ be the corresponding boundary measure on $\Om_\infty$ or\/ $\wt\Om$, respectively. Then $M_N\to M_\infty$ in the weak topology of the space $\P(\wt\Om)$.
\end{theorem}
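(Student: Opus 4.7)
The plan is to reduce the weak convergence to a pointwise convergence statement about the extreme coherent families $M^{(X)}_N$, and then handle each $X\in\wt\Om$ separately using the moment-like relations \eqref{eq8.B}.

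First, by Theorem \ref{thm7.A} applied to the relevant chain, one has the integral representation
$$
M_N = \int_{\wt\Om} M^{(X)}_N\, M_\infty(dX),
$$
so that for any $f\in C_b(\wt\Om)$,
$$
\int f\, dM_N = \int_{\wt\Om} \Bigl(\int f\, dM^{(X)}_N\Bigr)\, M_\infty(dX).
$$
Since the inner integrals are uniformly bounded by $\|f\|_\infty$, Lebesgue's bounded convergence theorem will reduce Theorem C to the claim that for \emph{every} $X\in\wt\Om$, the measures $M^{(X)}_N$ converge weakly to $\delta_X$ in $\P(\wt\Om)$.

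For $X\in\Om_k$ with $k<\infty$ this is immediate: the argument at the end of the proof of Theorem \ref{thm8.B} shows $M^{(X)}_N = \delta_X$ for all $N\ge k$. For $X\in\Om_\infty$, I would proceed as follows. Observe that $[X]$ is a \emph{bounded} closed interval, because the largest positive (resp.\ smallest negative) point of $X$ is the finite number $\zeta_+ q^{-a^+_1}$ (resp.\ $\zeta_- q^{-a^-_1}$); hence the set $K_X := \{Y\in\wt\Om : Y\subset [X]\}$ is a compact subset of $\wt\Om$ and contains the supports of every $M^{(X)}_N$ by Theorem \ref{thm8.A}. Using \eqref{eq8.B} with $N\ge\ell(\nu)$,
$$
\int_{\wt\Om} P_\nu(Y;q,t)\, M^{(X)}_N(dY) \;=\; (t^N;q,t)_\nu\, P_\nu(X;q,t) \;\longrightarrow\; P_\nu(X;q,t) \quad \text{as } N\to\infty,
$$
for every partition $\nu\in\Y$, so the moments of $M^{(X)}_N$ against any Macdonald symmetric function converge to the corresponding value at $X$.

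To upgrade this to weak convergence on $K_X$, I would invoke Stone--Weierstrass: the restriction to $K_X$ of the algebra $\Sym$ is a subalgebra of $C(K_X;\R)$ containing the constants, and it separates the points of $K_X$ because the power sums $p_k = \sum_{x\in Y} x^k$ already do so (two configurations with identical $p_k$ for all $k$ determine the same bounded empirical measure, hence coincide). Consequently $\Sym|_{K_X}$ is dense in $C(K_X;\R)$; since $\{P_\nu(\ccdot;q,t)\}_{\nu\in\Y}$ is a linear basis of $\Sym$, the moment convergence above yields $\int g\, dM^{(X)}_N \to g(X)$ for every $g\in C(K_X;\R)$. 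As every $f\in C_b(\wt\Om)$ restricts to such a $g$, and both $M^{(X)}_N$ and $\delta_X$ live in $K_X$, we obtain $M^{(X)}_N\to\delta_X$ weakly in $\P(\wt\Om)$. Combining this with Step 1 and bounded convergence completes the proof.

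The main obstacle I anticipate is the Stone--Weierstrass step: I need the $P_\nu$ to be genuinely continuous on $K_X$ (not just on the finite-rank strata $\Om_n$), which ultimately rests on the fact that the topology of $\wt\Om$ tolerates only accumulation at $0$ while the coordinates of $X$ are bounded, so that series like $\sum_{x\in Y} x^k$ converge uniformly over $Y\in K_X$. The rest (tightness, support on compacts, moment uniqueness) is then automatic from the compactness of $K_X$.
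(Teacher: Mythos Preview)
Your disintegration approach is sound and the core argument (Stone--Weierstrass on a compact set together with the moment relations \eqref{eq8.B}) is correct. However, one step is wrong: for $X\in\Om_k$ with $k<\infty$ it is \emph{not} true that $M^{(X)}_N=\delta_X$ for $N\ge k$. Indeed, testing \eqref{eq8.B} against $\delta_X$ would give $P_\nu(X)/(t^N;q,t)_\nu$ on the left, not $P_\nu(X)$; and the end of the proof of Theorem~\ref{thm8.B} only says $M^{(X)}_K$ is supported on configurations with at most $k$ points, which is weaker. The fix is immediate: your argument for $X\in\Om_\infty$ applies verbatim to every $X\in\wt\Om$, since $[X]$ is always a bounded interval, $K_X$ is always compact, and the moment computation $\int P_\nu\,dM^{(X)}_N=(t^N;q,t)_\nu\,P_\nu(X)\to P_\nu(X)$ holds uniformly in $X$. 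So simply drop the separate treatment of finite $k$.

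Once corrected, your proof differs from the paper's in how the lack of global compactness is handled. The paper works directly with $M_N$ and $M_\infty$: it first treats the case where $M_\infty$ has compact support (then all $M_N$ live on a common compact $\wt\Om[a,b]$, and the same Stone--Weierstrass/moment argument applies), and then reaches the general case by writing $M_\infty=(1-\eps)M'_\infty+\eps M''_\infty$ with $M'_\infty$ compactly supported and $\eps$ small. You instead disintegrate $M_N$ over the boundary via Theorem~\ref{thm7.A}, reduce to $M^{(X)}_N\to\delta_X$ for each $X$, and use that every extreme system is already compactly supported on $K_X$. Your route avoids the $\eps$-truncation and gives the stronger pointwise statement $M^{(X)}_N\to\delta_X$; the paper's route avoids invoking the integral representation and is slightly more self-contained.
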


\begin{proof}
Recall that the space $\wt\Om$ is locally compact and suppose first that $M_\infty$ is compactly supported. Then there exists an interval $[a,b]\subset\R$ such that $M_\infty$ is concentrated on the compact subset $\wt\Om[a,b]:=\{X\in\wt\Om: X\subset[a,b]\}$. It follows that the same also holds for all measures $M_N$. The symmetric functions form a dense subset of the Banach space $C(\wt\Om[a,b])$, hence it suffices to prove that, as $N\to\infty$,
\begin{equation}\label{eq8.B2}
\langle M_N, P_\nu(\ccdot;q,t)\rangle \to \langle M_\infty, P_\nu(\ccdot;q,t)\rangle
\end{equation}
for any partition $\nu$, 
where, as before,  the angular brackets denote the canonical pairing between measures and functions, and $M_N$ is regarded as a measure on $\wt\Om$. 

On the other hand, for large enough $N$ we may write
$$
\langle M_N, P_\nu(\ccdot;q,t)\rangle=\langle M_N, P_{\nu\mid N}(\ccdot;q,t)\rangle
$$
and then it follows from \eqref{eq8.B} and \eqref{eq8.B3} that, as $N\to\infty$, 
$$
\langle M_N, P_{\nu\mid N}(\ccdot;q,t)\rangle=(t^N;q,t)_\nu\langle M_\infty, P_{\nu\mid N}(\ccdot;q,t)\rangle.
$$
Since $(t^N;q,t)_\nu\to1$, this implies \eqref{eq8.B2}.

In the general case we may write $M_\infty$ as a convex combination of two probability measures,
$$
M_\infty=(1-\eps) M'_\infty+\eps M''_\infty,
$$
where $M'_\infty$ is compactly supported and  $\eps>0$ is a small parameter. Denote by $\{M'_N\}$ and $\{M''_\infty\}$ the coherent systems corresponding to $M'_\infty$ and $M''_\infty$, respectively. For an arbitrary fixed bounded continuos function $F$ on  $\wt\Om$ we have
\begin{multline*}
|\langle M_\infty,F\rangle - \langle M_N,F\rangle|\le 
(1-\eps)|\langle M'_\infty,F\rangle - \langle M'_N,F\rangle|  + \eps|\langle M''_\infty,F\rangle - \langle M''_N,F\rangle|\\
\le (1-\eps)|\langle M'_\infty,F\rangle - \langle M'_N,F\rangle|  +2\eps \Vert F\Vert.
\end{multline*}

By virtue of the above argument, as $N$ gets large, $|\langle M'_\infty,F\rangle - \langle M'_N,F\rangle|$ goes to $0$. It follows that 
$$
\lim_{N\to\infty}|\langle M_\infty,F\rangle - \langle M_N,F\rangle|=0,
$$
which completes the proof. 
\end{proof}

\bigskip

\noindent \textbf{Funding}

\medskip

\noindent This work was supported by the Russian Science Foundation [project 20-41-09009].

\bigskip

\noindent\textbf{Acknowledgments}
\medskip

\noindent I am grateful to Cesar Cuenca and an anonymous referee for valuable comments.

\end{document}